\newtheoremstyle{mio}%
{}{} 
{\itshape}{} 
{\bfseries}{.}{ } 
{#1 #2\thmnote{~\mdseries(#3)}} 
\theoremstyle{mio}
\newtheorem{teor}{Theorem}[section]
\newtheorem{cor}[teor]{Corollary}
\newtheorem{prop}[teor]{Proposition}
\newtheorem{lemma}[teor]{Lemma}
\newtheoremstyle{definition2}%
{}{} 
{}{} 
{\bfseries}{.}{ } 
{#1 #2\thmnote{\mdseries~ #3}} 
\theoremstyle{definition2}
\newtheorem{ex}[teor]{Example}
\newtheorem{oss}[teor]{Remark}
\title{Isolated points of the Zariski space}
\author{Dario Spirito}
\date{\today}
\address{Dipartimento di Matematica ``Tullio Levi-Civita'', Universit\`a degli Studi di Padova, Padova, Italy}
\email{spirito@math.unipd.it}
\subjclass[2010]{13F30; 13A15; 13A18; 54D99}
\keywords{Zariski space; constructible topology; Cantor space; isolated points; perfect spaces; extensions of valuations}
\newcommand{\B}{\mathcal{B}}
\newcommand{\V}{\mathcal{V}}
\newcommand{\D}{\mathcal{D}}
\newcommand{\inverse}{\mathrm{inv}}
\newcommand{\cons}{\mathrm{cons}}
\newcommand{\Cl}{\mathrm{Cl}}
\newcommand{\est}{\mathcal{E}}
\newcommand{\mm}{\mathfrak{m}}
\newcommand{\gen}{\mathrm{gen}}
\begin{document}

\begin{abstract}
Let $D$ be an integral domain and $L$ be a field containing $D$. We study the isolated points of the Zariski space $\Zar(L|D)$, with respect to the constructible topology. In particular, we completely characterize when $L$ (as a point) is isolated and, under the hypothesis that $L$ is the quotient field of $D$, when a valuation domain of dimension $1$ is isolated; as a consequence, we find all isolated points of $\Zar(D)$ when $D$ is a Noetherian domain and, under the hypothesis that $D$ and $D'$ are Noetherian, local and countable, we characterize when $\Zar(D)$ and $\Zar(D')$ are homeomorphic. We also show that if $V$ is a valuation domain and $L$ is transcendental over $V$ then the set of extensions of $V$ to $L$ has no isolated points.
\end{abstract}

\maketitle

\section{Introduction}
Let $D$ be an integral domain with quotient field $K$, and let $L$ be a field containing $K$. The \emph{Zariski space} of $L$ over $D$, denoted by $\Zar(L|D)$, is the set of all valuation rings containing $D$ and having quotient field $L$. O. Zariski introduced this set (under the name \emph{abstract Riemann surface}) and endowed it with a natural topology (later called the \emph{Zariski topology}) during its study of resolution of singularities; in particular, he used the compactness of the Zariski space to reduce the problem of gluing infinitely many projective models to the gluing of only finitely many of them \cite{zariski_sing,zariski_comp}. Later on, it was showed that $\Zar(L|D)$ enjoys even deeper topological properties: in particular, it is a \emph{spectral space}, meaning that there is always a ring $R$ such that $\Spec(R)$ (endowed with the Zariski topology) is homeomorphic to $\Zar(L|D)$, and an example of such an $R$ can be find using the Kronecker function ring construction \cite{dobbs_fedder_fontana,fontana_krr-abRs,fifolo_transactions}. Beyond being a very natural example of a spectral space ``occurring in nature'', the Zariski topology can also be used, for example, to study representation of integral domains as intersection of overrings \cite{olberding_noetherianspaces,olberding_affineschemes,olberding_topasp}, or in real and rigid algebraic geometry \cite{hub-kneb,schwartz-compactification}.

As a spectral space, two other topologies can be constructed on $\Zar(L|D)$ starting from the Zariski topology: the \emph{inverse} and the \emph{constructible} (or \emph{patch}) topology. Both of them give rise to spectral spaces (in particular, they are compact); furthermore, the constructible topology gains the property of being Hausdorff, and plays an important role in the topological characterization of spectral spaces (see for example Hochster's article \cite{hochster_spectral}). The constructible topology can also be studied through ultrafilters \cite{finocchiaro-ultrafiltri}, and this point of view allows to give many examples of spectral spaces, for example by finding them inside other spectral spaces (see \cite[Example 2.2(1)]{olberding_topasp} for some very general constructions, \cite{localizzazioni} for examples in the overring case, and \cite{topological-cons,spettrali-eab} for examples in the setting of semistar operations).

In this paper, we want to study the points of $\Zar(L|D)$ that are isolated, with respect to the constructible topology. Our starting point is a new interpretation of a result about the compactness of spaces in the form $\Zar(K|D)\setminus\{V\}$ \cite[Theorem 3.6]{ZarNoeth}, where $K$ is the quotient field of $D$: in particular, we show that if $V$ is isolated then $V$ is the integral closure of $D[x_1,\ldots,x_n]_M$, where $x_1,\ldots,x_n\in L$ and $M$ is a maximal ideal of $D[x_1,\ldots,x_n]$ (Theorem \ref{teor:x1xn}). Through this result, we characterize when $L$ is an isolated point of $\Zar(L|D)^\cons$ (i.e., $\Zar(L|D)$ endowed with the constructible topology; Proposition \ref{prop:isolated-dim0}) and, under the hypothesis that $L=K$ is the quotient field of $D$, when the one-dimensional valuation overrings are isolated (Theorem \ref{teor:ic-dim1}). 

In Section \ref{sect:noeth}, we study the isolated points of the constructible topology when $D$ is a Noetherian domain and $L=K$ is its quotient field. Theorem \ref{teor:noeth} gives a complete characterization: $V\in\Zar(K|D)=\Zar(D)$ is isolated if and only if the center $P$ of $V$ on $D$ has height at most $1$ and $P$ is contained in only finitely many minimal primes; in particular, this cannot happen if $D$ is local and of dimension at least $3$. In the countable case, we also give a complete characterization of when $\Zar(D)^\cons\simeq\Zar(D')^\cons$ under the hypothesis that $D$ and $D'$ are Noetherian and local (Theorem \ref{teor:Zarsimeq}).

The last two sections of the paper explore the case of extension of valuations. Section \ref{sect:Dfield} studies the case where $D$ itself is a field: in particular, we show that if the transcendence degree of $L$ over $D$ is at least $2$ then $\Zar(L|D)^\cons$ has no isolated points, improving \cite[Theorem 4.45]{preordered-groups}. In Section \ref{sect:extval}, we show that if $V$ is a valuation domain that is not a field and $K(X)$ is the field of rational functions, then the set of extensions of $V$ to $K(X)$ has no isolated points (Theorem \ref{teor:ext-K(X)}), and as a consequence we further extend \cite[Theorem 4.45]{preordered-groups} to $\Zar(L|D)^\cons$ when $D$ is an arbitrary integral domain (Theorem \ref{teor:trdeg2-field} and Corollary \ref{cor:LD-trdeg2}).

\section{Notation and preliminaries}
Throughout the paper, all rings will be commutative, unitary and will have no zero-divisors (that is, they are integral domains). We usually denote by $D$ such a domain and by $K$ its quotient field; we use $\overline{D}$ to denote the integral closure of $D$ in $K$.

\subsection{Spectral spaces}
A topological space $X$ is \emph{spectral} if it is homeomorphic to the prime spectrum of a ring, endowed with the Zariski topology; spectral spaces can also be characterized in a purely topological way (see \cite{hochster_spectral} and \cite{spectralspaces-libro}). Among their properties, spectral spaces are always compact and have a basis of open and compact sets. If $\Delta\subseteq X$, we denote by $\Cl(\Delta)$ the closure of $\Delta$. The topology of $X$ induces an order such that $x\leq y$ if and only if $y\in\Cl(x)$. If $Y\subseteq X$, the \emph{closure under generization} of $Y$ if the set $Y^\gen:=\{x\in X\mid x\leq y$ for some $y\in Y\}$, where $\leq$ is the order induced by the topology, and $Y$ is \emph{closed by generizations} if $Y=Y^\gen$.

If $X$ is a spectral space, the \emph{inverse topology} on $X$ is the coarsest topology such that the open and compact subsets of $X$ are closed. We denote by $X^\inverse$ the space $X$, endowed with the inverse topology. A subset $Y\subseteq X$ is closed in the inverse topology if and only if it is compact in the starting topology and closed by generizations; in particular, if $Y$ is compact in the starting topology then its closure in the inverse topology is $Y^\gen$.

If $X$ is a spectral space, the \emph{constructible topology} (or \emph{patch topology}) on $X$ is the coarsest topology such that the open and compact subsets of $X$ are both open and closed. We denote by $X^\cons$ the space $X$, endowed with the constructible topology; if $Y\subseteq X$, we denote by $Y^\cons$ the subset $Y$ considered with respect to the constructible topology, and by $\Cl^\cons(Y)$ the closure of $Y$ in $X^\cons$. If $Y=\Cl^\cons(Y)$, then $Y$ is compact in the starting topology; conversely, if $Y$ is closed in the starting topology or in the inverse topology, then it is closed also in the constructible topology.

Both $X^\inverse$ and $X^\cons$ are spectral spaces, and in particular compact spaces; moreover, $X^\cons$ is Hausdorff and zero-dimensional.

A map $f:X\longrightarrow Y$ of spectral spaces is a \emph{spectral map} if $f^{-1}(\Omega)$ is open and compact for every open and compact subset $\Omega$ of $Y$; in particular, a spectral map is continuous. If $f$ is both spectral and closed, then it is also proper, and in particular $f^{-1}(\Omega)$ is compact for every compact subset $\Omega$ of $Y$ \cite[5.3.7(i)]{spectralspaces-libro}. If $f:X\longrightarrow Y$ is a spectral map, then it is spectral also when $X$ and $Y$ are both endowed with the inverse topology, and when they are both endowed with the constructible topology \cite[Theorem 1.3.21]{spectralspaces-libro}. In the latter case, $f$ is also closed, since it is a continuous map between Hausdorff compact spaces.

\subsection{Isolated points}
If $X$ is a topological space, a point $p\in X$ is \emph{isolated} in $X$ if $\{p\}$ is an open set. If $X$ has no isolated points, then $X$ is said to be \emph{perfect}. The set of points that are not isolated in $X$ is a closed set, called the \emph{derived set} of $X$.

If $\Omega\subseteq X$ and $p\in\Omega$ is isolated in $X$, then $p$ is also isolated in $\Omega$; if $\Omega$ is open, then $p$ is isolated in $X$ if and only if $p$ is isolated in $\Omega$.

\subsection{Valuation domains}

A \emph{valuation domain} is an integral domain $V$ such that, for every $x\neq 0$ in the quotient field of $V$, at least one of $x$ and $x^{-1}$ is in $V$. Any valuation domain is local; we denote the maximal ideal of $V$ by $\mathfrak{m}_V$. If $L$ is a field containing the quotient field $K$ of $V$, an \emph{extension} of $V$ to $L$ is a valuation domain $W$ having quotient field $L$ such that $W\cap K=V$. We denote the set of extension of $V$ to $L$ by $\est(L|V)$; this set is always nonempty (see e.g. \cite[Theorem 20.1]{gilmer}).

If $D$ is an integral domain and $L$ is a field containing $D$, the \emph{Zariski space} (or \emph{Zariski-Riemann space}) of $L$ over $D$, denoted by $\Zar(L|D)$, is the set of all valuation domains containing $D$ and having quotient field $L$. The Zariski space $\Zar(L|D)$ is always nonempty. When $L$ is the quotient field of $D$, we denote $\Zar(L|D)$ simply by $\Zar(D)$, and we call its elements the \emph{valuation overrings} of $D$.\footnote{An \emph{overring} of $D$ is, more generally, a ring contained between $D$ and its quotient field.} If $D'$ is the integral closure of $D$ in $L$, then $\Zar(L|D)=\Zar(L|D')$; in particular, $\Zar(D)=\Zar(\overline{D})$. A valuation ring in $\Zar(L|D)$ is \emph{minimal} if it is minimal with respect to containment.

The Zariski-Riemann space $\Zar(L|D)$ can be endowed with a natural topology, called the \emph{Zariski topology}, which is the topology generated by the basic open sets
\begin{equation*}
\B(x_1,\ldots,x_n):=\{V\in\Zar(L|D)\mid x_1,\ldots,x_n\in V\},
\end{equation*}
as $x_1,\ldots,x_n$ range among the elements of $L$; we use the notation $\B^L(x_1,\ldots,x_n)$ if we need to underline the field $L$. Under this topology, $\Zar(L|D)$ is a spectral space whose order is the opposite of the containment order \cite{fontana_krr-abRs,dobbs_fedder_fontana}; in particular, the minimal valuation rings in $\Zar(L|D)$ are maximal with respect to the order induced by the Zariski topology. As a spectral space, we can define the inverse and the constructible topology on $\Zar(L|D)$; a set $\Delta\subseteq\Zar(L|D)$ is closed with respect to the inverse topology if and only if it is compact with respect to the Zariski topology and $\Delta=\{W\in\Zar(L|D)| W\supseteq V$ for some $V\in\Delta\}$ \cite[Remark 2.2 and Proposition 2.6]{fifolo_transactions}.

Since $\B(z_1,\ldots,z_n)=\B(z_1)\cap\cdots\cap\B(z_n)$ for every $z_1,\ldots,z_n\in L$, a basis of the constructible topology of $\Zar(L|D)$ is the family of the sets in the form $\B(x_1,\ldots,x_n)\cap\B(y_1)^c\cap\cdots\cap\B(y_m)^c$, as $x_1,\ldots,x_n,y_1,\ldots,y_m$ range in $L$. In particular, $V$ is isolated in $\Zar(L|D)^\cons$ if and only if
\begin{equation*}
\begin{aligned}
\{V\} & =\B(x_1,\ldots,x_n)\cap\B(y_1)^c\cap\cdots\cap\B(y_m)^c=\\
 & =\Zar(L|D[x_1,\ldots,x_n])\cap\B(y_1)^c\cap\cdots\cap\B(y_m)^c
\end{aligned}
\end{equation*}
for some $x_1,\ldots,x_n,y_1,\ldots,y_m\in L$.

If $L'\subseteq L$ is a field extension and $D\subseteq L'$, we have a restriction map
\begin{equation*}
\begin{aligned}
\rho\colon\Zar(L|D) & \longrightarrow\Zar(L'|D),\\
V & \longmapsto V\cap L',
\end{aligned}
\end{equation*}
which is spectral in both the Zariski and the constructible topology; therefore, it is spectral and closed with respect to the constructible topology (on both sets). In particular, if $V\in\Zar(L'|D)$, then $\est(L|V)=\rho^{-1}(V)$; therefore, $\est(L|V)$ is always closed in $\Zar(L|D)^\cons$, and in particular it is compact both in the Zariski and the constructible topology.

\medskip

Since, by definition, the spectrum $\Spec(D)$ is a spectral space (when endowed with the Zariski topology), we can define the inverse and the constructible topology also on $\Spec(D)$. For every ideal $I$ of $D$, set $\V(I):=\{P\in\Spec(D)\mid I\subseteq P\}$ and $\D(I):=\Spec(D)\setminus\V(I)$: then, a basis of $\Spec(D)^\cons$ is given by the intersections $\V(aD)\cap\D(I)$, as $a$ ranges in $D$ and $I$ among the finitely generated ideals of $D$ \cite[Theorem 12.1.10(iv)]{spectralspaces-libro}.

For every field $L$, we can define a map
\begin{equation*}
\begin{aligned}
\gamma\colon\Zar(L|D) & \longrightarrow\Spec(D),\\
V & \longmapsto\mathfrak{m}_V\cap D,
\end{aligned}
\end{equation*}
which is called the \emph{center map}. When $\Zar(L|D)$ and $\Spec(D)$ are endowed with the Zariski topology, $\gamma$ is continuous (\cite[Chapter VI, \textsection 17, Lemma 1]{zariski_samuel_II} or \cite[Lemma 2.1]{dobbs_fedder_fontana}), spectral, surjective (this follows, for example, from \cite[Theorem 5.21]{atiyah} or \cite[Theorem 19.6]{gilmer}) and closed \cite[Theorem 2.5]{dobbs_fedder_fontana}, so in particular it is proper. Therefore, $\gamma$ is a spectral map also when $\Zar(L|D)$ and $\Spec(D)$ are endowed with their respective constructible topologies.

\section{General results}
We begin by establishing some general criteria to determine which valuation domains are isolated in $\Zar(D)$.

Let $D$ be an integral domain: we say that a prime ideal $P$ of $D$ is \emph{almost essential} if there is a unique valuation overring of $D$ having center $P$; equivalently, $P$ is almost essential if and only if the integral closure of $D_P$ is a valuation domain $V$. When this happens, we say that $V$ is an \emph{almost essential valuation overring} of $D$. When $D_P$ itself is a valuation overring, we say that $P$ is an \emph{essential prime} of $D$ and that $D_P$ is an \emph{essential valuation overring} of $D$.

In the context of almost isolated primes and valuation overrings, isolated valuation rings correspond to isolated prime ideals.
\begin{prop}\label{prop:essential}
Let $D$ be an integral domain, and let $P$ be an almost essential prime ideal of $D$; let $V$ be the valuation overring with center $P$. Then, $V$ is isolated in $\Zar(D)^\cons$ if and only if $P$ is isolated in $\Spec(D)^\cons$.
\end{prop}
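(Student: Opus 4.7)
The plan is to use the center map $\gamma\colon\Zar(D)\to\Spec(D)$ as the bridge between the two isolation conditions, exploiting the fact that under the almost essential hypothesis the fiber $\gamma^{-1}(P)$ is exactly the singleton $\{V\}$. The relevant facts recalled in the preliminaries are that $\gamma$ is spectral, surjective, continuous and closed in the Zariski topologies; that it is therefore spectral (hence continuous) in the constructible topologies; and that a spectral map between constructible topologies, being a continuous map between Hausdorff compact spaces, is automatically closed.

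For the ``if'' direction, assume $P$ is isolated in $\Spec(D)^\cons$, so that $\{P\}$ is open. By continuity of $\gamma\colon\Zar(D)^\cons\to\Spec(D)^\cons$, the preimage $\gamma^{-1}(\{P\})$ is open. Since $P$ is almost essential, this preimage equals $\{V\}$, so $V$ is isolated in $\Zar(D)^\cons$.

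For the converse, assume $V$ is isolated, so that $\Zar(D)\setminus\{V\}$ is closed in $\Zar(D)^\cons$. Because $\gamma$ is closed in the constructible topology, the image $\gamma(\Zar(D)\setminus\{V\})$ is closed in $\Spec(D)^\cons$. I then compute this image: on the one hand any $W\neq V$ satisfies $\gamma(W)\neq P$, since by almost essentiality $V$ is the only valuation overring with center $P$; on the other hand every $Q\in\Spec(D)\setminus\{P\}$ is the center of some valuation overring by surjectivity of $\gamma$, and such an overring is automatically distinct from $V$. Hence $\gamma(\Zar(D)\setminus\{V\})=\Spec(D)\setminus\{P\}$, which is therefore closed in $\Spec(D)^\cons$, so $\{P\}$ is open and $P$ is isolated.

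There is no real obstacle here beyond unpacking the almost essential hypothesis to identify the fiber $\gamma^{-1}(P)$ with $\{V\}$ and invoking the standard fact that spectral maps are automatically closed in the constructible topology; both ingredients are already laid out in the preliminaries, so the proof is essentially a direct transport of the isolation property across $\gamma$.
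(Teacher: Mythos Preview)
Your proof is correct and follows essentially the same argument as the paper: both directions are obtained by transporting the isolation condition across the center map $\gamma$, using continuity for one direction and closedness (in the constructible topology) together with the identification $\gamma^{-1}(P)=\{V\}$ for the other. Your write-up is in fact slightly more explicit than the paper's in justifying the equality $\gamma(\Zar(D)\setminus\{V\})=\Spec(D)\setminus\{P\}$ via surjectivity and the almost essential hypothesis.
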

\begin{proof}
Let $\gamma:\Zar(D)\longrightarrow\Spec(D)$ be the center map. If $P$ is isolated in $\Spec(D)^\cons$, then $\{P\}$ is open and thus, as $\gamma$ is continuous, $\{V\}=\gamma^{-1}(\{P\})$ is open in $\Zar(D)^\cons$, i.e., $V$ is isolated. Conversely, if $V$ is isolated then $\Zar(D)\setminus\{V\}$ is closed, with respect to the constructible topology, and thus $\gamma(\Zar(D)\setminus\{V\})=\Spec(D)\setminus\{P\}$ is closed in $\Spec(D)^\cons$. Hence, $\{P\}$ is open and $P$ is isolated in $\Spec(D)^\cons$, as claimed.
\end{proof}

\begin{cor}
Let $D$ be a Pr\"ufer domain, and let $V$ be a valuation overring of $D$ with center $P$. Then, $V$ is isolated in $\Zar(D)^\cons$ if and only if $P$ is isolated in $\Spec(D)^\cons$. In particular, $\Zar(D)^\cons$ is perfect if and only if $\Spec(D)^\cons$ is perfect.
\end{cor}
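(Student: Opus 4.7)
The plan is to observe that the corollary is essentially a direct application of Proposition \ref{prop:essential}, once we recall the defining property of Pr\"ufer domains. Recall that $D$ is Pr\"ufer precisely when $D_P$ is a valuation domain for every prime $P\in\Spec(D)$; equivalently, every prime ideal of $D$ is \emph{essential} in the sense introduced just before Proposition \ref{prop:essential}, and in particular every prime is almost essential. Moreover, for any valuation overring $V$ of a Pr\"ufer domain $D$ with center $P$, one has $V=D_P$, so the center map $\gamma\colon\Zar(D)\longrightarrow\Spec(D)$ is a bijection.

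For the first statement, fix a valuation overring $V$ of $D$ with center $P$. Since $P$ is almost essential and $V$ is the unique valuation overring with center $P$, Proposition \ref{prop:essential} applies directly and yields that $V$ is isolated in $\Zar(D)^\cons$ if and only if $P$ is isolated in $\Spec(D)^\cons$.

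For the ``in particular'' statement, I would argue by contrapositive in both directions using the bijectivity of $\gamma$. If $\Spec(D)^\cons$ fails to be perfect, pick an isolated prime $P$; then the unique valuation overring with center $P$ is isolated in $\Zar(D)^\cons$ by the first part, so $\Zar(D)^\cons$ is not perfect. Conversely, if $V$ is isolated in $\Zar(D)^\cons$, then since $D$ is Pr\"ufer every element of $\Zar(D)$ has a center and that center determines the valuation uniquely, so $V$ has some center $P$, and by the first part $P$ is an isolated point of $\Spec(D)^\cons$.

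I do not anticipate any real obstacle: once the Pr\"ufer hypothesis is translated into the statement ``every prime is almost essential'', the proposition does all the work, and the ``perfect'' equivalence is a formal consequence together with the bijectivity of $\gamma$.
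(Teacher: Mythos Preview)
Your proposal is correct and follows essentially the same route as the paper: both reduce immediately to Proposition \ref{prop:essential} after observing that in a Pr\"ufer domain every valuation overring is essential (equivalently, every prime is essential, hence almost essential). The paper's proof is simply more terse, leaving the ``in particular'' clause implicit, whereas you spell it out via the bijectivity of $\gamma$; this extra detail is fine but not needed.
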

\begin{proof}
Since $D$ is a Pr\"ufer domain, every valuation overring is essential. The claim follows from Proposition \ref{prop:essential}.
\end{proof}

In general, almost essential valuation overrings are rare; for example, if $D$ is Noetherian, no prime ideal of height $2$ or more can be almost essential. For this reason, we need more general results; the first step is connecting isolated valuation rings with compactness.
\begin{prop}\label{prop:minimale}
Let $X$ be a spectral space, and let $x$ be a maximal element with respect to the order induced by the topology. Then, the following are equivalent:
\begin{enumerate}[(i)]
\item\label{prop:minimale:isolato} $x$ is isolated in $X^\cons$;
\item\label{prop:minimale:comp} $X\setminus\{x\}$ is compact, with respect to the starting topology;
\item\label{prop:minimale:clinv} $X\setminus\{x\}$ is closed, with respect to the inverse topology.
\end{enumerate}
\end{prop}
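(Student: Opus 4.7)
The plan is to show (i) $\Rightarrow$ (ii) $\Leftrightarrow$ (iii) $\Rightarrow$ (i), using nothing beyond the facts about inverse and constructible topologies recalled in Section 2 together with the observation that maximality of $x$ automatically makes $X\setminus\{x\}$ closed by generizations.

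First I would establish the key observation: the set $Y:=X\setminus\{x\}$ is closed by generizations. Indeed, if $z\in Y$ and $w\leq z$ with $w=x$, then $z\in\Cl(x)$, but maximality of $x$ forces $\Cl(x)=\{x\}$, so $z=x$, contradicting $z\in Y$. Hence every generization of a point of $Y$ lies in $Y$.

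Next I would prove (ii) $\Leftrightarrow$ (iii). By the characterization recalled in the preliminaries, a subset of a spectral space is closed in the inverse topology if and only if it is compact in the starting topology and closed by generizations. Since $Y$ is automatically closed by generizations by the previous step, (ii) and (iii) are the same condition.

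For (iii) $\Rightarrow$ (i): every inverse-closed set is also closed in the constructible topology (as recalled), so $Y$ is constructible-closed and $\{x\}$ is constructible-open, i.e.\ $x$ is isolated.

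For (i) $\Rightarrow$ (ii): if $x$ is isolated in $X^\cons$, then $Y$ is constructible-closed. Since $X^\cons$ is compact, $Y$ is compact with respect to the constructible topology. The identity map $X^\cons\longrightarrow X$ is continuous (the constructible topology is finer than the starting topology), hence the image of $Y$, which is $Y$ itself, is compact in the starting topology.

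The only step that requires any thought is the maximality argument showing $Y$ is closed by generizations; everything else is a direct appeal to the recalled properties of the inverse and constructible topologies, so I do not anticipate a real obstacle.
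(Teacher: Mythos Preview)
Your proof is correct and follows essentially the same route as the paper: both argue that maximality of $x$ makes $Y=X\setminus\{x\}$ closed by generizations, deduce (ii)\,$\Leftrightarrow$\,(iii) from the standard characterization of inverse-closed sets, obtain (iii)\,$\Rightarrow$\,(i) from the fact that inverse-closed implies constructible-closed, and obtain (i)\,$\Rightarrow$\,(ii) by noting that constructible-closed implies constructible-compact implies compact in the original (coarser) topology. Your version is simply a bit more explicit about the maximality step and the continuity of the identity map $X^\cons\to X$.
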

\begin{proof}
Let $Y:=X\setminus\{x\}$.

The equivalence of \ref{prop:minimale:comp} and \ref{prop:minimale:clinv} follows from the fact that $Y$ is closed by generizations.

If \ref{prop:minimale:isolato} holds, then $\{x\}$ is an open set in the constructible topology, and thus $Y$ is closed; since $X^\cons$ is compact, it follows that $Y$ is compact in the constructible topology and thus also in the Zariski topology (which is coarser). Thus, \ref{prop:minimale:comp} holds.

Conversely, if \ref{prop:minimale:clinv} holds, then $Y$ is closed also in the constructible topology; hence, $\{x\}$ is open and $x$ is isolated. Thus, \ref{prop:minimale:isolato} holds.
\end{proof}

In particular, the previous proposition applies when $X=\Zar(L|D)$ and $V$ is a minimal valuation overring. In this case, the fact that $\Zar(L|D)\setminus\{V\}$ is compact has very strong consequences.
\begin{teor}\label{teor:x1xn}
Let $D$ be an integrally closed domain and let $V\in\Zar(L|D)$. Then, the following are equivalent.
\begin{enumerate}[(i)]
\item\label{teor:x1xn:isol} $V$ is isolated in $\Zar(L|D)^\cons$;
\item\label{teor:x1xn:max} there are $x_1,\ldots,x_n\in L$ and a maximal ideal $M$ of $D[x_1,\ldots,x_n]$ such that $V$ is the integral closure of $D[x_1,\ldots,x_n]_M$ and $M$ is isolated in $\Spec(D[x_1,\ldots,x_n])^\cons$;
\item\label{teor:x1xn:prime} there are $x_1,\ldots,x_n\in L$ and a prime ideal $P$ of $D[x_1,\ldots,x_n]$ such that $V$ is the integral closure of $D[x_1,\ldots,x_n]_P$ and $P$ is isolated in $\Spec(D[x_1,\ldots,x_n])^\cons$.
\end{enumerate}
\end{teor}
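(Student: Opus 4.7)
The implication (ii) $\Rightarrow$ (iii) is immediate, since maximal ideals are prime. For (iii) $\Rightarrow$ (i), set $R := D[x_1,\ldots,x_n]$; since $\B(x_1,\ldots,x_n) = \Zar(L|R)$ is open in $\Zar(L|D)^\cons$, it suffices to show $V$ is isolated in $\Zar(L|R)^\cons$. I will argue that $\{V\} = \gamma^{-1}(\{P\})$ for the center map $\gamma \colon \Zar(L|R) \to \Spec(R)$: any $W \in \Zar(L|R)$ with center $P$ contains $R_P$ and, being integrally closed, contains $V = \overline{R_P}^L$; if $W \supsetneq V$, then $W = V_Q$ for some non-maximal prime $Q$ of $V$, but by incomparability in the integral extension $R_P \subseteq V$, the contraction $Q \cap R$ is strictly smaller than $P$, contradicting that $W$ has center $P$. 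Since $\gamma$ is continuous in the constructible topology, isolation of $P$ forces isolation of $V$.

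For (i) $\Rightarrow$ (iii), start from $\{V\} = \B(x_1,\ldots,x_n) \cap \B(y_1)^c \cap \cdots \cap \B(y_m)^c$ and set $R := D[x_1,\ldots,x_n, 1/y_1,\ldots,1/y_m]$, which lies in $V$ since $y_j \notin V$ implies $1/y_j \in V$. Let $P := \mm_V \cap R$. Each $1/y_j$ lies in $P$, so any $W \in \Zar(L|R)$ with center $P$ has $1/y_j \in \mm_W$, hence $y_j \notin W$ and $W \in \{V\}$; thus $\{V\} = \gamma^{-1}(P)$. Closedness and surjectivity of $\gamma$ in the constructible topology then transport isolation of $V$ into isolation of $P$ in $\Spec(R)^\cons$. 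To identify $V = \overline{R_P}^L$, set $F := \mathrm{Frac}(R)$ and $V_0 := V \cap F$: by restricting, $V_0$ is the unique valuation overring of $R$ in $F$ with center $P$, so $V_0 = \overline{R_P}^F$, and $V$ is the unique extension of $V_0$ to $L$. If $L/F$ were transcendental, any transcendental $t \in L$ would admit at least two extensions of $V_0$ to $F(t)$ (a Gauss extension and one giving $t$ negative value), each further extending to $L$ and contradicting uniqueness. Hence $L/F$ is algebraic, and in that case every valuation ring of $L$ containing $V_0$ is an overring of the unique extension $V$; intersecting yields $V = \overline{V_0}^L = \overline{R_P}^L$ by transitivity of integral closure.

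For the upgrade (iii) $\Rightarrow$ (ii), use the isolation of $P$ in $\Spec(R)^\cons$ to write $\{P\} = \V(a) \cap \D(I)$ for some $a \in R$ and finitely generated ideal $I \not\subseteq P$. Pick $i \in I \setminus P$ and set $R' := R[1/i]$, a finitely generated $D$-subalgebra of $V$. The center $M := \mm_V \cap R' = PR'$ is maximal: any prime $Q' \supseteq M$ pulls back to a prime $Q \supseteq P$ of $R$ with $i \notin Q$; since $a \in P \subseteq Q$, this gives $Q \in \V(a) \cap \D(i) \subseteq \V(a) \cap \D(I) = \{P\}$, forcing $Q = P$ and $Q' = M$. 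Localizing gives $R'_M = R_P$, so $\overline{R'_M}^L = V$; and $M$ is isolated in $\Spec(R')^\cons$ because $\Spec(R')$ is a constructibly open-and-closed subspace of $\Spec(R)$ in which $\{P\}$ was already open.

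\textbf{Main obstacle.} The most delicate step is the identification $V = \overline{R_P}^L$ in (i) $\Rightarrow$ (iii): it combines the valuation-theoretic fact that unique extension forces $L/F$ to be algebraic with the description of $\overline{V_0}^L$, in the algebraic case, as the intersection of valuation rings of $L$ containing $V_0$ — which, under uniqueness, collapses onto $V$. The other steps are essentially formal applications of the spectrality and closedness of the center map.
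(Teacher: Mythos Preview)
Your argument is essentially correct and takes a genuinely different route from the paper's. The paper first reduces to the case where $L$ is the quotient field of $D$ (via the auxiliary ring $D+XL[[X]]$ and Lemma~\ref{lemma:quoziente}) and then, for \ref{teor:x1xn:isol} $\Rightarrow$ \ref{teor:x1xn:max}, invokes an external compactness result \cite[Theorem~3.6]{ZarNoeth} to produce the extra generators $x_{k+1},\ldots,x_n$. You avoid both devices: by adjoining the inverses $1/y_j$ to form $R=D[x_1,\ldots,x_n,1/y_1,\ldots,1/y_m]$, you arrange directly that $\gamma^{-1}(P)=\{V\}$, and then identify $V$ with $\overline{R_P}^{\,L}$ by a self-contained valuation-theoretic argument (forcing $L/\mathrm{Frac}(R)$ to be algebraic via uniqueness of extension, then using that the integral closure of $V_0$ in an algebraic extension is the intersection of its extensions, hence equals $V$ when the extension is unique). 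This is more elementary and keeps the proof internal. The paper's approach, in exchange, lands on \ref{teor:x1xn:max} directly with $M$ maximal, whereas you reach \ref{teor:x1xn:prime} first and then upgrade to \ref{teor:x1xn:max} by a further localization.

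One small correction in your \ref{teor:x1xn:prime} $\Rightarrow$ \ref{teor:x1xn:max}: a basic open set of $\Spec(R)^\cons$ containing $P$ has the form $\D(a)\cap\V(J)$ with $a\in R\setminus P$ and $J\subseteq P$ finitely generated, not $\V(a)\cap\D(I)$. (For instance, if $R$ is a two-dimensional local domain, its maximal ideal $\mm$ is isolated in $\Spec(R)^\cons$ but cannot be written as $\V(a)\cap\D(I)$: one would need $I\not\subseteq\mm$, forcing $\D(I)=\Spec(R)$, and then $\V(a)=\{\mm\}$ is impossible since any nonunit $a$ lies in a height-one prime.) The fix is immediate: write $\{P\}=\D(a)\cap\V(J)$, set $R':=R[1/a]$, and your argument goes through verbatim with $M=PR'$, since any prime $Q'\supseteq M$ contracts to $Q\supseteq P$ with $a\notin Q$ and $J\subseteq Q$, hence $Q\in\D(a)\cap\V(J)=\{P\}$.
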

\begin{proof}
Let $X$ be an indeterminate over $D$, and let $R:=D+XL[[X]]$. By the reasoning in the proof of \cite[Proposition 3.3]{ZarNoeth2} (or by Lemma \ref{lemma:quoziente} below) the Zariski space $\Zar(L|D)^\cons$ is homeomorphic to $(\Zar(R)\setminus\{L[[X]]\})^\cons$, which is open in $\Zar(R)^\cons$; in particular, a $W\in\Zar(L|D)$ is isolated with respect to the constructible topology if and only if $W+XL[[X]]$ is isolated in $\Zar(R)^\cons$. Therefore, without loss of generality we can suppose that $L$ is the quotient field of $D$. 

\ref{teor:x1xn:isol} $\Longrightarrow$ \ref{teor:x1xn:prime} Since $V$ is isolated, there are $x_1,\ldots,x_k,y_1,\ldots,y_m\in L$ such that $\{V\}=\Zar(D[x_1,\ldots,x_k])\cap\B(y_1)^c\cap\cdots\cap\B(y_m)^c$. In particular, $V$ is a minimal valuation overring of $D[x_1,\ldots,x_k]$. By Proposition \ref{prop:minimale}, $\Zar(D[x_1,\ldots,x_k])\setminus\{V\}$ is compact, with respect to the Zariski topology; therefore, by \cite[Theorem 3.6]{ZarNoeth}, there are $x_{k+1},\ldots,x_n\in L$ such that $V$ is the integral closure of $D[x_1,\ldots,x_k][x_{k+1},\ldots,x_n]_M=D[x_1,\ldots,x_n]_M$, for some maximal ideal $M$ of $D[x_1,\ldots,x_n]$. Hence, $M$ is almost essential in $D[x_1,\ldots,x_n]$, and thus, by Proposition \ref{prop:essential}, $M$ must be isolated in $\Spec(D[x_1,\ldots,x_n])^\cons$. Hence, \ref{teor:x1xn:max} holds.

\ref{teor:x1xn:max} $\Longrightarrow$ \ref{teor:x1xn:prime} is obvious.

\ref{teor:x1xn:prime} $\Longrightarrow$ \ref{teor:x1xn:isol} The set $\Zar(D[x_1,\ldots,x_n])=\B(x_1,\ldots,x_n)$ is open in the constructible topology, and thus $V$ is isolated in $\Zar(D)^\cons$ if and only if it is isolated in $\Zar(D[x_1,\ldots,x_n])^\cons$. By hypothesis, $P$ is almost essential for $D[x_1,\ldots,x_n]$, and thus by Proposition \ref{prop:essential} the integral closure $V$ of $D[x_1,\ldots,x_n]_P$ is isolated, as claimed.
\end{proof}

\section{Dimension 0}\label{sect:dim0}
In this section, we study when the field $L$ is isolated in $\Zar(L|D)^\cons$. If $L$ is the quotient field of $D$, then $L$ is an essential valuation overring of $D$, and thus one can reason through Proposition \ref{prop:essential}; however, it is possible to use a more general approach.

A domain $D$ with quotient field $K$ is said to be a \emph{Goldman domain} (or a \emph{G-domain}) if $K$ is a finitely generated $D$-algebra, or equivalently if $K=D[u]$ for some $u\in K$.
\begin{prop}\label{prop:isolated-dim0}
Let $D$ be an integral domain with quotient field $K$, and let $L$ be a field extension of $K$. Then, $L$ is isolated in $\Zar(L|D)^\cons$ if and only if $D$ is a Goldman domain and $K\subseteq L$ is an algebraic extension.
\end{prop}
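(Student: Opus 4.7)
The plan starts by analyzing the constructible-open neighborhoods of $L$. A basic open set of $\Zar(L|D)^\cons$ has the form $\B(x_1,\ldots,x_n)\cap\B(y_1)^c\cap\cdots\cap\B(y_m)^c$ with $x_i,y_j\in L$. Since $L$ as a valuation ring contains every element of $L$, the condition $L\in\B(y_j)^c$ would force $y_j\notin L$, a contradiction; hence $L$ is isolated in $\Zar(L|D)^\cons$ if and only if there exist $x_1,\ldots,x_n\in L$ such that $\Zar(L|D[x_1,\ldots,x_n])=\{L\}$.

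The next step is to characterize the subrings $D'$ of $L$ containing $D$ for which $\Zar(L|D')=\{L\}$. If $D'$ is not a field, pick a nonzero maximal ideal $M$ of $D'$; a proper valuation domain of the quotient field of $D'$ dominates $D'_M$, and any extension of it to $L$ (which exists by \cite[Theorem 20.1]{gilmer}) is a proper member of $\Zar(L|D')$. If $D'$ is a field but $L/D'$ admits a transcendental element $t$, then $D'[t]_{(t)}$ is a proper valuation ring of $D'(t)$, and extending to $L$ again produces a nontrivial element of $\Zar(L|D')$. Conversely, when $D'$ is a field and $L/D'$ is algebraic, every $V\in\Zar(L|D')$ is integrally closed in $L$ and must contain the integral closure of $D'$ in $L$, which is $L$ itself. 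Thus $\Zar(L|D')=\{L\}$ holds exactly when $D'$ is a field over which $L$ is algebraic.

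For the converse direction, if $D$ is a Goldman domain with $K=D[u]$ and $L/K$ is algebraic, I take $x_1:=u\in L$: then $D[x_1]=K$ is a field and $L/K$ is algebraic, so $L$ is isolated by the previous step. The forward direction is the main obstacle and relies on a standard version of the Nullstellensatz. Assume $F:=D[x_1,\ldots,x_n]$ is a field with $L/F$ algebraic. Since $F$ is a field containing $D$, it contains $K$, so $F=K[x_1,\ldots,x_n]$, and by Zariski's lemma $F/K$ is a finite algebraic extension. Each $x_i$ satisfies a polynomial relation with coefficients in $D$ and nonzero leading coefficient $a_i\in D$; setting $d:=a_1\cdots a_n$, each $x_i$ becomes integral over $D[1/d]$, so $F$ is integral over $D[1/d]$. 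Since $F$ is a field, so is $D[1/d]$; and as $D\subseteq D[1/d]\subseteq K$, this forces $D[1/d]=K$. Thus $K$ is a finitely generated $D$-algebra, i.e., $D$ is a Goldman domain, and $L/K$ is algebraic because $L/F$ and $F/K$ are.
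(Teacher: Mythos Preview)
Your proof is correct and reaches the same conclusion by a more elementary route than the paper. The paper invokes Theorem~\ref{teor:x1xn} (which in turn rests on a compactness argument and an external result from \cite{ZarNoeth}) to conclude that $L$ is the integral closure of some $D[x_1,\ldots,x_n]_M$ with $M$ of height $0$, hence that $F=D[x_1,\ldots,x_n]$ is a field with $L/F$ algebraic; you obtain the same intermediate statement directly from the observation that $L$ can never lie in a set $\B(y)^c$, so isolation reduces to $\Zar(L|D[x_1,\ldots,x_n])=\{L\}$, which you then characterize by standard valuation-extension arguments. For the final step, the paper appeals to Kaplansky's Theorems 21 and 22 on $G$-domains, while you give an explicit Zariski's-lemma-plus-clearing-denominators argument to show $D[1/d]=K$. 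Your approach buys independence from the paper's earlier machinery and from the cited results in Kaplansky, at the cost of spelling out a few standard steps; the paper's proof is shorter once that machinery is in place.
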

\begin{proof}
Suppose first that the two conditions hold. Then, $K=D[u]$ for some $u\in K$; since $K\subseteq L$ is algebraic, it follows that $\B(u)=\Zar(L|K)=\{L\}$. Hence, $L$ is isolated in $\Zar(L|D)^\cons$.

Conversely, suppose that $L$ is isolated. By Theorem \ref{teor:x1xn}, there are $x_1,\ldots,x_n\in L$ such that $L$ is the integral closure of $D[x_1,\ldots,x_n]_M$ for some maximal ideal $M$; since $M$ must have height $0$, $F:=D[x_1,\ldots,x_n]$ must be a field such that $F\subseteq L$ is algebraic.

Suppose that $F$ is transcendental over $K$: then, we can take a transcendence basis $y_1,\ldots,y_k$ of $F$ over $K$. By construction, $F$ is algebraic over $D[y_1,\ldots,y_k]$; since $F$ is a field, it is a Goldman domain, and thus by \cite[Theorem 22]{kaplansky} so should be $D[y_1,\ldots,y_k]$, against \cite[Theorem 21]{kaplansky}. Thus $F$ is algebraic over $K$. Applying again \cite[Theorem 22]{kaplansky} to the algebraic extension $D\subset F$, we see that $D$ is a Goldman domain; furthermore, $L$ is algebraic over $F$ and thus over $K$. The claim is proved.
\end{proof}

The previous result can be used to give some necessary conditions for $V$ to be isolated. We premise a lemma.
\begin{lemma}\label{lemma:quoziente}
Let $D$ be an integral domain, $L$ be a field containing $D$, and let $W\in\Zar(L|D)$. Let $\pi:W\longrightarrow W/\mm_W$ be the quotient map. Then, the map
\begin{equation*}
\begin{aligned}
\overline{\pi}\colon\{Z\in\Zar(L|D)\mid Z\subseteq W\} & \longrightarrow\Zar(W/\mm_W|D/(\mm_W\cap D)),\\
Z & \longmapsto \pi(Z)\\
\end{aligned}
\end{equation*}
is a homeomorphism, when both sets are endowed with either the Zariski or the constructible topology.
\end{lemma}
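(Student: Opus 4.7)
The plan is to construct an explicit inverse, then observe that the bijection sends basic Zariski open sets to basic Zariski open sets in both directions; this handles the Zariski topology, and since a bijection commutes with complements and the constructible topology on a spectral space is generated by the basic Zariski opens together with their complements, the constructible case follows immediately.

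The essential preliminary observation is that $\mm_W\subseteq Z$ for every $Z\in\Zar(L|D)$ with $Z\subseteq W$: any nonzero $x\in\mm_W$ satisfies $x^{-1}\notin W$, hence $x^{-1}\notin Z$, so the valuation property of $Z$ forces $x\in Z$. Thus $\mm_W$ is an ideal of $Z$, the quotient $Z/\mm_W$ embeds as a subring of $W/\mm_W$, and checking representatives shows that it is itself a valuation ring with quotient field $W/\mm_W$. The induced map $D/(\mm_W\cap D)\hookrightarrow Z/\mm_W$ shows that $\overline{\pi}(Z)$ does lie in $\Zar(W/\mm_W\mid D/(\mm_W\cap D))$, so $\overline{\pi}$ is well defined.

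For the inverse, given $Z'\in\Zar(W/\mm_W\mid D/(\mm_W\cap D))$ set $\sigma(Z'):=\pi^{-1}(Z')\subseteq W$. To see that $\sigma(Z')\in\Zar(L|D)$, take any nonzero $x\in L$: either $x\notin W$ (so $x^{-1}\in\mm_W\subseteq\sigma(Z')$), or $x\in\mm_W\subseteq\sigma(Z')$, or $x$ is a unit of $W$ and $\pi(x)\neq 0$, in which case $\pi(x)\in Z'$ or $\pi(x)^{-1}\in Z'$ puts $x$ or $x^{-1}$ into $\sigma(Z')$. If $\mm_W\neq 0$, one rewrites any $a/b\in L$ as $(am)/(bm)$ with $m\in\mm_W\setminus\{0\}$ to see that $\sigma(Z')$ has quotient field $L$ (the case $\mm_W=0$ is trivial, since then $W=L$). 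The containment $D\subseteq\sigma(Z')$ follows from $D/(\mm_W\cap D)\subseteq Z'$, and $\overline{\pi}$ and $\sigma$ are mutual inverses by construction.

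For the topological statement, given $x\in W$ the containment $\mm_W\subseteq Z$ (for $Z$ in the source) shows that $x\in Z$ if and only if $\pi(x)\in Z/\mm_W$, so
\[
\B^L(x)\cap\{Z\in\Zar(L|D)\mid Z\subseteq W\}=\overline{\pi}^{-1}(\B^{W/\mm_W}(\pi(x))),
\]
while for $x\notin W$ the left-hand side is empty. Conversely any $\bar{y}\in W/\mm_W$ admits a lift $y\in W$ that yields the same identity. Hence $\overline{\pi}$ is a homeomorphism in the Zariski topology, and by the observation in the first paragraph also in the constructible topology. The only delicate point is verifying that $Z/\mm_W$ has quotient field exactly $W/\mm_W$; everything else is essentially bookkeeping.
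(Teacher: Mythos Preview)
Your proof is correct and follows essentially the same route as the paper's: both arguments verify that $\mm_W\subseteq Z$, construct the inverse via $\pi^{-1}$, and check that subbasic Zariski opens correspond under $\overline{\pi}$. The only cosmetic difference is that where the paper cites \cite{fontana_libro} for the fact that the pullback $\pi^{-1}(Z')$ is a valuation domain with quotient field $L$, and uses that $W$ is a localization of $Z$ to identify the quotient field of $Z/\mm_W$, you verify these facts directly by element-chasing; your approach is more self-contained, the paper's more concise.
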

\begin{proof}
Let $Z\in\Zar(L|D)$: then, $\ker\pi=\mm_W\subseteq Z$ since $Z$ and $W$ are valuation domains with the same quotient field and $Z\subseteq W$. Hence, $\pi(Z)=Z/\mm_W$ is a valuation ring containing $D/(\mm_W\cap D)$; moreover, since $W$ is a localization of $V$, $W/\mm_W$ is a localization of $Z/\mm_W$ and thus $W/\mm_W$ is the quotient field of $\pi(Z)$. Hence, $\overline{\pi}$ is well-defined.

Moreover, if $Z'\in\Zar(W/\mm_W|D/M)$, then $Z:=\pi^{-1}(Z')$ is the pullback of $Z'$ along the quotient $W\longrightarrow W/\mm_W$. Thus, $Z$ is a valuation domain by \cite[Proposition 1.1.8(1)]{fontana_libro}, and its quotient field is $L$ by \cite[Lemma 1.1.4(10)]{fontana_libro}. Hence $\overline{\pi}$ is surjective. Furthermore, if $Z\in\Zar(L|D)$, then $\ker\pi\subseteq Z$ and thus $\pi^{-1}(\pi(Z))=Z$; hence, $\overline{\pi}$ is bijective.

Let now $x\in W/\mm_W$. Then, $Z\in\overline{\pi}^{-1}(\B(x))$ if and only if $x\in\pi(Z)$. Since $\ker\pi\subseteq Z$, this happens if and only if $Z$ contains all of $\pi^{-1}(x)$; thus, for every $y\in\pi^{-1}(x)$, we have $\overline{\pi}^{-1}(\B(x))=\B(y)$, and likewise $\overline{\pi}(\B(x))=\B(\pi(x))$ for every $x\in L$. Hence, $\overline{\pi}$ is continuous and open when both $\{Z\in\Zar(L|D)\mid Z\subseteq W\}$ and $\Zar(W/\mm_W|D/(\mm_W\cap D))$ are endowed with the Zariski topology, and thus it is a homeomorphism. It follows that it is also a homeomorphism when both sets are endowed with the constructible topology, as claimed.
\end{proof}

\begin{prop}\label{prop:estresfield-trasc}
Let $V\in\Zar(D)$ be a valuation domain with center $P$ on $D$. If $V$ is isolated in $\Zar(D)^\cons$, then the field extension $D_P/PD_P\subseteq V/\mathfrak{m}_V$ is algebraic.
\end{prop}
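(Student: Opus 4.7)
The plan is to reduce the statement to Proposition \ref{prop:isolated-dim0} applied to the residue field, by way of the homeomorphism in Lemma \ref{lemma:quoziente}.

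First I would observe that the hypothesis that $V$ is isolated in $\Zar(D)^\cons$ passes to any subspace containing $V$: if $\{V\}$ is open in $\Zar(D)^\cons$, then $\{V\}$ is also open in the subspace $\{Z\in\Zar(D)\mid Z\subseteq V\}$ endowed with the induced constructible topology. This uses only that $\{V\}=\{V\}\cap\{Z\mid Z\subseteq V\}$.

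Next I would invoke Lemma \ref{lemma:quoziente} with $W=V$ and $L=K$. Writing $P=\mathfrak{m}_V\cap D$ and $k_V:=V/\mathfrak{m}_V$, the lemma provides a homeomorphism
\[
\overline{\pi}\colon\{Z\in\Zar(D)\mid Z\subseteq V\}\longrightarrow\Zar(k_V|D/P),
\]
which is a homeomorphism also with respect to the constructible topologies. Since $\overline{\pi}(V)=k_V$, the first step gives that $k_V$ is isolated in $\Zar(k_V|D/P)^\cons$.

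Now the quotient field of $D/P$ is exactly $D_P/PD_P$, which sits inside $k_V$ in the canonical way. Therefore I can apply Proposition \ref{prop:isolated-dim0} to the pair $(D/P,k_V)$: since $k_V$ is isolated in $\Zar(k_V|D/P)^\cons$, the proposition yields that $D/P$ is a Goldman domain and that the extension $D_P/PD_P\subseteq k_V$ is algebraic, which is exactly the desired conclusion (and in fact slightly more). There is no real obstacle here; the only point that deserves attention is the formal verification that the isolated-point property descends along the inclusion $\{Z\subseteq V\}\hookrightarrow\Zar(D)$ and then translates through $\overline{\pi}$, and both are immediate from the general topology of constructible topologies recalled in Section~2.
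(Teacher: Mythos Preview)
Your proof is correct and follows essentially the same approach as the paper: restrict to the closed subspace $\{Z\in\Zar(D)\mid Z\subseteq V\}$, transport the isolated point through the homeomorphism of Lemma~\ref{lemma:quoziente}, and apply Proposition~\ref{prop:isolated-dim0}. You also correctly note the extra consequence that $D/P$ is a Goldman domain, which the paper's argument yields as well but does not record.
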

\begin{proof}
Consider $\Delta:=\{W\in\Zar(D)\mid W\subseteq V\}$. Since $\mm_V\cap D=P$, by Lemma \ref{lemma:quoziente}, the quotient map $V\longrightarrow V/\mathfrak{m}$ induces a homeomorphism between $\Delta^\cons$ and $\Zar(V/\mathfrak{m}_V|D/P)^\cons$, and thus $V/\mathfrak{m}$ is isolated in $\Zar(V/\mathfrak{m}_V|D/P)^\cons$. Let $F$ be the quotient field of $D/P$: then, $F=(D/P)_{P/P}=D_P/PD_P$. By Proposition \ref{prop:isolated-dim0}, $F\subseteq V/\mathfrak{m}_V$ must be algebraic; the claim follows.
\end{proof}

\begin{cor}
Let $D$ be an integral domain, let $\gamma:\Zar(D)\longrightarrow\Spec(D)$ be the center map and let $V\in\Zar(D)$. If $V$ is isolated in $\Zar(D)^\cons$, then $V$ is minimal in $\gamma^{-1}(\gamma(V))$.
\end{cor}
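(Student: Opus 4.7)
The plan is to argue contrapositively. Set $P:=\gamma(V)=\mm_V\cap D$, take an arbitrary $W\in\gamma^{-1}(P)$ with $W\subseteq V$, and show $W=V$. The strategy is to reduce modulo $\mm_V$ as in Lemma \ref{lemma:quoziente} and apply Proposition \ref{prop:estresfield-trasc}.

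First I would use the observation already made in the proof of Lemma \ref{lemma:quoziente}: two valuation domains with the same quotient field and $W\subseteq V$ satisfy $\mm_V\subseteq W$. Consequently $V=W_{\mm_V}$, and $W/\mm_V$ is a valuation domain of the field $V/\mm_V$. Since $W$ has center $P$ on $D$, every $s\in D\setminus P$ is a unit of $W$, so the natural map $D/P\hookrightarrow W/\mm_V$ extends to an embedding of the fraction field $D_P/PD_P=\mathrm{Frac}(D/P)$ into $W/\mm_V$.

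The conclusion then follows by combining Proposition \ref{prop:estresfield-trasc} (which gives that $D_P/PD_P\subseteq V/\mm_V$ is algebraic) with the standard fact that the only valuation domain of a field $L$ containing a subfield $k$ with $L|k$ algebraic is $L$ itself: if $x\in L$ is algebraic over $k$ then $x^{-1}\in k[x]$, so $x$ is a unit in every valuation overring of $k$, forcing any such overring to equal $L$. Applied with $L=V/\mm_V$ and $k=D_P/PD_P$, this forces $W/\mm_V=V/\mm_V$, and since $\mm_V\subseteq W\subseteq V$ we get $W=V$ by pulling back along $V\to V/\mm_V$.

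I do not expect any real obstacle: the whole argument is a direct combination of Proposition \ref{prop:estresfield-trasc} and Lemma \ref{lemma:quoziente} together with one well-known fact about algebraic field extensions. The only bookkeeping is to check that $W/\mm_V$ is a valuation subring of $V/\mm_V$ containing $D_P/PD_P$, which is immediate from $V=W_{\mm_V}$ and from $W$ having center $P$ on $D$.
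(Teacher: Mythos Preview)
Your proof is correct and follows essentially the same approach as the paper: both arguments reduce modulo $\mm_V$ via Lemma~\ref{lemma:quoziente}, invoke Proposition~\ref{prop:estresfield-trasc} to get that $D_P/PD_P\subseteq V/\mm_V$ is algebraic, and then use that an algebraic extension has no nontrivial valuation subrings. The paper phrases this contrapositively (``if $V$ is not minimal then $V/\mm_V$ is not minimal in $\Zar(V/\mm_V|D_P/PD_P)$, so the extension is not algebraic''), while you phrase it directly, but the content is identical; your opening word ``contrapositively'' is a slight misnomer, since what you actually wrote is a direct proof of minimality.
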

\begin{proof}
Let $P:=\gamma(V)$. If $V$ is not minimal, then $V/\mathfrak{m}_V$ is not minimal in $\Zar(V/\mathfrak{m}_V|D_P/PD_P)$; hence, the extension $D_P/PD_P\subseteq V/\mathfrak{m}_V$ cannot be algebraic, against Proposition \ref{prop:estresfield-trasc}.
\end{proof}

\section{Dimension 1}\label{sect:dim1}
We now analyze the case where the valuation ring $V$ has dimension $1$; however, the methods we use only work when $V$ is a valuation overring of $D$, i.e., only for the space $\Zar(D)=\Zar(K|D)$, where $K$ is the quotient field of $D$. Unlike in the proof of Theorem \ref{teor:x1xn}, we cannot use \cite[Proposition 3.3]{ZarNoeth2} to extend these results to arbitrary Zariski spaces $\Zar(L|D)$, because that construction changes the dimension of the valuation domains involved. 

The idea of this section is to study the maximal ideals of the finitely generated algebras $D[x_1,\ldots,x_n]$.

\begin{prop}\label{prop:fgalg-maxid-h1}
Let $(D,\mathfrak{m})$ be an integrally closed local domain, and let $T\neq D$ be a finitely generated $D$-algebra contained in the quotient field $K$ of $D$. If $\mathfrak{m}T\neq T$, then no maximal ideal of $T$ above $\mathfrak{m}$ has height $1$.
\end{prop}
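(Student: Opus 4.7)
The plan is to argue by contradiction: assume $M$ is a maximal ideal of $T$ with $M\cap D=\mathfrak{m}$ and $\mathrm{ht}(M)=1$, and produce a prime $P$ of $T$ strictly between $0$ and $M$, forcing $\mathrm{ht}(M)\geq 2$.

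First I would normalize the presentation of $T$ by bringing the generators to a common denominator, writing $T=D[a_1/b,\ldots,a_n/b]$ with $a_1,\ldots,a_n,b\in D$. Since $T\neq D$, $b$ cannot be a unit in $D$, so $b\in\mathfrak{m}$. The hypothesis $\mathfrak{m}T\neq T$ then forces each $a_i$ to lie in $\mathfrak{m}$ as well: if some $a_i$ were a unit of $D$, then $1/b=a_i^{-1}(a_i/b)\in T$, so $b$ would be invertible in $T$ and $1\in bT\subseteq\mathfrak{m}T$, a contradiction. In particular $b\in\mathfrak{m}\subseteq M$, so $bT\subseteq M$.

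The heart of the argument is then an analysis of $T/bT$. From $bx_i=a_i$ in $T$ and $\bar b=0$, each $\bar a_i$ vanishes in $T/bT$, so the structure map $D\to T/bT$ factors through $R:=D/(b,a_1,\ldots,a_n)$, which is local with maximal ideal the image of $\mathfrak{m}$. The assignment $Y_i\mapsto\bar x_i$ exhibits $T/bT$ as a quotient of the polynomial ring $R[Y_1,\ldots,Y_n]$, and $M/bT$ is a maximal ideal of $T/bT$ contracting to the maximal ideal of $R$. I would then argue, using both $T\neq D$ (so that not all $\bar x_i$ vanish in $T/bT$) and the fact that the residue extension $D/\mathfrak{m}\subseteq T/M$ is finite (by Zariski's lemma, since $T/M$ is a field finitely generated over $D/\mathfrak{m}$), that $M/bT$ has positive height in $T/bT$. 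Lifting a prime $Q\subsetneq M/bT$ of $T/bT$ gives a prime $P$ of $T$ with $bT\subseteq P\subsetneq M$; since $b\neq 0$ we have $P\neq 0$, yielding the chain $0\subsetneq P\subsetneq M$ and the desired contradiction.

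The main obstacle I expect is establishing the positive-height claim for $M/bT$ in $T/bT$: one must rule out the degenerate case in which relations in the kernel of $D[Y_1,\ldots,Y_n]\to T$ beyond $bY_i-a_i$ collapse $T/bT$ at $M/bT$ down to a zero-dimensional ring, which would permit $\mathrm{ht}(M/bT)=0$ and hence $\mathrm{ht}(M)=1$. Preventing this collapse is where I expect the integrally closed hypothesis on $D$ to play the decisive role, presumably through the representation $D=\bigcap V$ over valuation overrings of $D$, ensuring the non-degeneracy needed to produce the intermediate prime.
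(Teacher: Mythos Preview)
Your proposal sets up a plausible contradiction framework but leaves the decisive step completely open, and the reductions you perform do not actually bring you closer to it. Once you know $b\neq 0$ and $b\in M$, the statement ``$M/bT$ has positive height in $T/bT$'' is \emph{equivalent} to the existence of a prime $P$ with $bT\subseteq P\subsetneq M$, which (since $T$ is a domain and $b\neq 0$) immediately gives $\mathrm{ht}(M)\geq 2$; conversely, if $\mathrm{ht}(M)=1$ then every prime strictly below $M$ is $(0)$, so $M$ is automatically minimal over $bT$. Thus your target is a reformulation of the proposition itself, and the passage to $T/bT$ together with the polynomial presentation over $R=D/(b,a_1,\ldots,a_n)$ gives no leverage: the kernel of $R[Y_1,\ldots,Y_n]\to T/bT$ is uncontrolled, and nothing you have written constrains it. The finiteness of $T/M$ over $D/\mathfrak{m}$ says nothing about heights in $T/bT$, and your parenthetical claim that $T\neq D$ forces some $\bar{x}_i\neq 0$ in $T/bT$ is itself unjustified (and even if true would not yield the height bound). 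You correctly sense that integral closedness must enter, but ``presumably through $D=\bigcap V$'' is not an argument.

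The paper's proof is structurally different: it proceeds by induction on the number $n$ of generators. For $n=1$, after ruling out $x^{-1}\in D$ (which would force $\mathfrak{m}T=T$), one has $x,x^{-1}\notin D$; a theorem of Seidenberg, valid precisely because $D$ is integrally closed, then guarantees that $\mathfrak{m}D[x]$ is a \emph{non-maximal} prime of $D[x]$. This exhibits a specific prime strictly between $(0)$ and any maximal ideal above $\mathfrak{m}$, forcing height $\geq 2$. The inductive step writes $T=A[x_n]$ with $A=D[x_1,\ldots,x_{n-1}]$, handles the case where $x_n$ is integral over $A$ by going up, and otherwise passes to the integral closure of $A$ localized at a suitable prime so as to reduce to the $n=1$ case over a new integrally closed local base. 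The hypothesis on $D$ is thus used concretely via Seidenberg's result, not through an abstract intersection of valuation overrings.
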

\begin{proof}
Let $T:=D[x_1,\ldots,x_n]$; we proceed by induction on $n$.

Suppose $n=1$, and let $x:=x_1$; without loss of generality, $x\notin D$. If $x^{-1}\in D$, then $x\in\mathfrak{m}$, and thus $\mathfrak{m}T=T$, a contradiction. Hence $x,x^{-1}\notin D$. By \cite[Theorem 6]{seiden-dim}, the ideal $\mathfrak{p}:=\mathfrak{m}T$ is prime but not maximal; since every maximal ideal of $T$ above $\mathfrak{m}$ must contain $\mathfrak{p}$, it follows that no such maximal ideal can have height $1$.

Suppose that the claim holds up to $n-1$; let $A:=D[x_1,\ldots,x_{n-1}]$, so that $T=A[x_n]$; without loss of generality, $A\neq D$ and $x_n\notin A$. Let $M$ be a maximal ideal of $T$ above $\mathfrak{m}$. If $x_n$ is integral over $A$, then $T$ is integral over $A$, and thus the height of $M$ is equal to the height of $M\cap A$, which is not equal to $1$ by induction.

Suppose that $x_n$ is not integral over $A$. Let $A'$ be the integral closure of $A$; then, $T\subseteq A'[x_n]$ is an integral extension, and since $x_n$ is not integral over $A$ it follows that $A'\subsetneq A'[x_n]$. Take a maximal ideal $M'$ of $A'[x_n]$ above $M$. Let $N:=M'\cap A'$; then, $N$ is a nonzero prime ideal of $A'$, and thus $A'':=(A')_N$ is a local integrally closed domain with maximal ideal $N(A')_N\neq(0)$. Then, the ring $A''[x_n]$ is the quotient ring of $A'[x_n]$ with respect to the multiplicatively closed set $A'[x_n]\setminus N$, the set $M'':=M'A''[x_n]$ is a maximal ideal, and $N(A')_N\subseteq M''$. Applying the case $n=1$ to $A''$ and $A''[x_n]$, it follows that the height of $M''$ is not $1$; since the height of $M''$ is the same of the height of $M'$ and of $M$, it follows that the height of $M$ is not $1$, as claimed.
\end{proof}

\begin{teor}\label{teor:ic-dim1}
Let $D$ be an integral domain, and let $V\in\Zar(D)$ be a valuation overring of dimension $1$. Then, $V$ is isolated in $\Zar(D)^\cons$ if and only if $V$ is a localization of $\overline{D}$ and its center on $\overline{D}$ is isolated in $\Spec(\overline{D})^\cons$.
\end{teor}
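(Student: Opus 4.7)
The plan is to reduce to the integrally closed case (exploiting $\Zar(D)=\Zar(\overline{D})$), treat the ``if'' direction with a single application of Proposition~\ref{prop:essential}, and for the ``only if'' direction use Theorem~\ref{teor:x1xn} to reduce the problem to a finitely generated overring, which will then be forced by Proposition~\ref{prop:fgalg-maxid-h1} to degenerate to a localization of $D$.

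Since $\Zar(D)=\Zar(\overline{D})$ as subsets of the common fraction field, with identical basic open sets $\B(z)$, the constructible topologies coincide, so I may assume throughout that $D=\overline{D}$ and prove that $V$ is isolated if and only if $V=D_P$ with $P=\mm_V\cap D$ isolated in $\Spec(D)^\cons$. For the ``if'' direction, if $V=D_P$ then $P$ is essential; moreover it is almost essential, since any valuation with center $P$ necessarily contains and dominates $D_P$, hence equals it. Proposition~\ref{prop:essential} then transfers the isolation of $P$ to the isolation of $V$.

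For the converse, suppose $V$ is isolated and $\dim V=1$. Theorem~\ref{teor:x1xn} supplies $x_1,\ldots,x_n\in K$ and a maximal ideal $M'$ of $T:=D[x_1,\ldots,x_n]$, isolated in $\Spec(T)^\cons$, with $V$ equal to the integral closure of $T_{M'}$; as integral extensions preserve Krull dimension, $M'$ has height $1$. I then set $M_0:=M'\cap D$, $D':=D_{M_0}$ (integrally closed and local), and $T':=D'[x_1,\ldots,x_n]=T_{D\setminus M_0}\subseteq K$. Since $M'\cap(D\setminus M_0)=\emptyset$, the ideal $M'T'$ is a prime of $T'$ with $(T')_{M'T'}=T_{M'}$, still of height $1$, contracting to the maximal ideal $M_0D'$ of $D'$. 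The crucial point, which I expect to be the main obstacle, is verifying that $M'T'$ is in fact \emph{maximal} in $T'$: under the order-preserving bijection between primes of $T'$ and primes of $T$ disjoint from $D\setminus M_0$, any strict overprime of $M'T'$ would correspond to a prime of $T$ strictly containing $M'$, contradicting the maximality of $M'$.

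Once maximality of $M'T'$ is established, Proposition~\ref{prop:fgalg-maxid-h1} applies to $(D',M_0D')$ and the finitely generated $D'$-subalgebra $T'$ of $K$ (the hypothesis $M_0D'\cdot T'\neq T'$ holds because $M_0D'\cdot T'\subseteq M'T'\subsetneq T'$), and forces $T'=D'$. Consequently $T_{M'}=(T')_{M'T'}=D'=D_{M_0}$, which is already integrally closed, so $V=\overline{T_{M'}}=D_{M_0}$. In particular $M_0=\mm_V\cap D$ and $V=D_{M_0}$ is essential; a second application of Proposition~\ref{prop:essential} concludes that $M_0$ is isolated in $\Spec(D)^\cons$, which is exactly what was to be shown.
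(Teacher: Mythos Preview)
Your argument is correct and follows essentially the same route as the paper's proof: reduce to $D$ integrally closed, handle the ``if'' direction via Proposition~\ref{prop:essential}, and for ``only if'' combine Theorem~\ref{teor:x1xn} with Proposition~\ref{prop:fgalg-maxid-h1} to force $V$ to be a localization of $D$. The only difference is cosmetic: the paper first passes to $D_P$ (noting that $V$ remains isolated in $\Zar(D_P)^\cons$) and then invokes Theorem~\ref{teor:x1xn} there, whereas you invoke Theorem~\ref{teor:x1xn} over $D$ and localize afterwards, which costs you the small extra verification that $M'T'$ stays maximal in $T'$.
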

\begin{proof}
Since $\Zar(D)=\Zar(\overline{D})$, we can suppose without loss of generality that $D$ is integrally closed.

If the two conditions hold, then $V$ is isolated by Proposition \ref{prop:essential}.

Suppose that $V$ is isolated in $\Zar(D)^\cons$. Let $P$ be the center of $V$ on $D$, and suppose that $V\neq D_P$. Since $V$ is also isolated in $\Zar(D_P)^\cons$, by Theorem \ref{teor:x1xn} there are $x_1,\ldots,x_n\in K\setminus D_P$ such that $V$ is the integral closure of $D_P[x_1,\ldots,x_n]_M$, where $M$ is a maximal ideal of $D_P[x_1,\ldots,x_n]$. However, $\mathfrak{m}_V\cap D_P[x_1,\ldots,x_n]=M$, and thus $M\cap D_P=PD_P$, so that $PD_P\cdot D_P[x_1,\ldots,x_n]\neq D_P[x_1,\ldots,x_n]$; by Proposition \ref{prop:fgalg-maxid-h1}, $M$ cannot have height $1$. However, the dimension of the integral closure of $D_P[x_1,\ldots,x_n]_M$ is exactly the height of $M$; hence, this contradicts the fact that $V$ has dimension $1$. Thus, $V=D_P$. The fact that $P$ is isolated in $\Zar(D)^\cons$ now follows from Proposition \ref{prop:essential}.
\end{proof}

\begin{cor}
Let $D$ be an integral domain, and let $V\in\Zar(D)$ be a minimal valuation overring of $D$. If $\dim(V)=1$ and $V$ is isolated in $\Zar(D)^\cons$, then the center of $V$ on $D$ has height $1$.
\end{cor}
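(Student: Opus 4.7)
The plan is to invoke Theorem~\ref{teor:ic-dim1} and then exploit the minimality of $V$ to control the center $P$. By Theorem~\ref{teor:ic-dim1}, $V=\overline{D}_Q$ for some prime $Q$ of $\overline{D}$ with $\mathrm{ht}(Q)=\dim V=1$ and $Q$ isolated in $\Spec(\overline{D})^\cons$; let $P:=Q\cap D$ be the center of $V$ on $D$. The inequality $\mathrm{ht}(P)\geq\mathrm{ht}(Q)=1$ is automatic for integral extensions of domains, so it suffices to show $\mathrm{ht}(P)\leq 1$.

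First, the minimality of $V$ in $\Zar(D)$ forces $P$ to be a maximal ideal of $D$. By Lemma~\ref{lemma:quoziente}, valuation overrings of $D$ properly contained in $V$ correspond to non-trivial elements of $\Zar(V/\mathfrak m_V\mid D/P)$, and since $V/\mathfrak m_V=\kappa(Q)$ is algebraic over $\kappa(P)$ (as $\overline D/Q$ is integral over $D/P$), any non-trivial valuation ring of $\kappa(Q)$ containing $D/P$ must meet $D/P$ in a nonzero prime. Were $P$ not maximal, one could dominate $(D/P)_{\mathfrak q}$ for some nonzero prime $\mathfrak q$ of $D/P$ by a valuation ring of $\kappa(P)$, extend algebraically to a non-trivial $W$ in $\kappa(Q)$, and recover via Lemma~\ref{lemma:quoziente} a $Z\subsetneq V$ in $\Zar(D)$, contradicting minimality. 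Hence $P$ is maximal, and by incomparability in integral extensions $Q$ is maximal in $\overline D$.

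Suppose for contradiction $\mathrm{ht}(P)\geq 2$, and pick a chain $0\subsetneq P_1\subsetneq P$ in $D$. Lying over and going up yield a second maximal ideal $Q_2\neq Q$ of $\overline D$ over $P$ with $\mathrm{ht}(Q_2)\geq 2$. The local integrally closed domain $\overline D_{Q_2}$ has dimension $\geq 2$, and by Chevalley's theorem it is dominated by many valuation rings $W\subseteq K$; each such $W$ lies in $\Zar(D)$ with center $P$ on $D$ and is incomparable to $V$ (since their centers on $\overline D$ are the incomparable primes $Q$ and $Q_2$).

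The main obstacle is turning the existence of such $W$ into a failure of isolation for $V$. Writing the basic constructible open that isolates $V$ as $\mathcal B(x_1,\ldots,x_k)\cap\mathcal B(y_1)^c\cap\cdots\cap\mathcal B(y_m)^c$, the task reduces to exhibiting, for some valuation $v_W$ dominating $\overline D_{Q_2}$, the finitely many sign conditions $v_W(x_i)\geq 0$ and $v_W(y_j)<0$; the key point will be that each $x_i\in V=\overline D_Q$ can be written as $a/s$ with $s\in\overline D\setminus Q$ (and similarly for the $y_j$), so arranging $s\notin Q_2$ — which is possible by the abundance of valuations dominating the positive-dimensional local domain $\overline D_{Q_2}$ — guarantees such a $W$. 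Granting this genericity step, the required contradiction with isolation gives $\mathrm{ht}(P)\leq 1$.
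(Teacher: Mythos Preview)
Your argument is considerably more elaborate than the paper's one-line proof (which simply says the claim is a direct consequence of Theorem~\ref{teor:ic-dim1}), and it contains a genuine gap in its final step. The earlier parts are fine: the use of Lemma~\ref{lemma:quoziente} together with minimality to force $P$ to be maximal is correct, and lying over/going up do produce a second maximal ideal $Q_2\neq Q$ of $\overline D$ above $P$ with $\mathrm{ht}(Q_2)\ge 2$ once you assume $\mathrm{ht}(P)\ge 2$.

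The problem is the last paragraph. You try to place some valuation $W$ dominating $\overline D_{Q_2}$ inside the basic constructible set
\[
\Omega=\B(x_1,\ldots,x_k)\cap\B(y_1)^c\cap\cdots\cap\B(y_m)^c
\]
that isolates $V$; but by hypothesis $\Omega=\{V\}$, so no $W\neq V$ can lie in it. The ``genericity step'' you ask to be granted is therefore not merely unproved but impossible as stated. Concretely, there is no reason an element $x_i\in V=\overline D_Q$ should admit a representation $a_i/s_i$ with $s_i\in\overline D\setminus Q_2$: an element of $\overline D_Q$ may very well fail to belong to $\overline D_{Q_2}$ (the primes $Q$ and $Q_2$ are incomparable maximal ideals, so $\overline D_Q$ and $\overline D_{Q_2}$ are incomparable overrings of $\overline D$), and the ``abundance'' of valuations dominating $\overline D_{Q_2}$ is irrelevant---each of them must still satisfy the finitely many sign conditions defining $\Omega$, and those conditions single out $V$ alone. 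A strategy that seeks to contradict isolation by exhibiting a second point of the isolating neighbourhood cannot work; you need a different way to pass from the existence of $Q_2$ to a contradiction (or to bound $\mathrm{ht}(P)$ directly from $V=\overline D_Q$ and $\mathrm{ht}(Q)=1$).
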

\begin{proof}
The claim is a direct consequence of Theorem \ref{teor:ic-dim1}.
\end{proof}

Theorem \ref{teor:ic-dim1} does not work when $V$ has dimension 2 or more, as the next example shows.
\begin{ex}
Let $F$ be a field, take two independent indeterminates $X$ and $Y$, and consider $D:=F+XF(Y)[[X]]$, i.e., $D$ is the ring of all power series with coefficients in $F(Y)$ such that the $0$-degree coefficient belongs to $F$. Then, $D$ is a one-dimensional local integrally closed domain (its maximal ideal is $XF(Y)[[X]]$), and its valuation overrings are its quotient field, $F(Y)[[X]]$ and the rings in the form $W+XF(Y)[[X]]$, where $W$ belongs to $\Zar(F(Y)|F)\setminus\{F(Y)\}$, i.e., $W$ is either $F[Y]_{(f)}$ for some irreducible polynomial $f\in F[Y]$) or $W=F[Y^{-1}]_{(Y^{-1})}$.

Each of these $W+XF(Y)[[X]]$ is isolated in $\Zar(D)^\cons$, since each $W$ is isolated in $\Zar(F(Y)|F)$ (this follows, for example, by applying Theorem \ref{teor:noeth} below to $F[Y]$ or to $F[Y^{-1}]$). However, since every $W+XF(Y)[[X]]$ has dimension $2$, it can't be a localization of $D=\overline{D}$.
\end{ex}

\section{The Noetherian case}\label{sect:noeth}
In this section, we want to characterize the isolated points of $\Zar(D)^\cons$ when $D$ is a Noetherian domain. If $D$ is integrally closed, this is a straightforward consequence of  Theorem \ref{teor:ic-dim1}; to extend it to the non-integrally closed case, we need a few lemmas. (Note that the integral closure of a Noetherian domain is not necessarily Noetherian; see e.g. \cite[Example 5, page 209]{nagata_localrings}.)

\begin{lemma}\label{lemma:intersec}
Let $D$ be an integral domain. Let $P$ be a prime ideal and let $\Delta\subseteq\Spec(D)$. If $P=\bigcap\{Q\mid Q\in\Delta\}$, then $P\in\Cl^\cons(\Delta)$.
\end{lemma}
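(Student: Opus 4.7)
The plan is to work directly with a basis of $\Spec(D)^\cons$ recalled in the preliminaries: a basic constructible open neighborhood of $P$ may be taken in the form $U=\V(J)\cap\D(g_1)\cap\cdots\cap\D(g_n)$, where $J=(f_1,\ldots,f_m)$ is a finitely generated ideal with $J\subseteq P$ and $g_1,\ldots,g_n\in D\setminus P$. To prove $P\in\Cl^\cons(\Delta)$ it suffices to produce, for each such $U$, some $Q\in\Delta\cap U$.

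The containment in $\V(J)$ will come for free from the hypothesis: since $P=\bigcap\{Q\mid Q\in\Delta\}$, every $Q\in\Delta$ contains $P$ and hence contains $J$, so $\Delta\subseteq\V(J)$ automatically. The real content of the argument is to find a \emph{single} $Q\in\Delta$ that simultaneously avoids every $g_j$. This is where primality of the elements of $\Delta$ is used via the standard product trick: set $g:=g_1\cdots g_n$. Since each $g_j\notin P$ and $P$ is prime, $g\notin P=\bigcap_{Q\in\Delta}Q$, so some $Q\in\Delta$ satisfies $g\notin Q$; because $Q$ itself is prime, this forces $g_j\notin Q$ for every $j$, and therefore $Q\in\D(g_1)\cap\cdots\cap\D(g_n)$. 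Combined with $Q\in\V(J)$, this gives $Q\in U\cap\Delta$, as required.

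There is no substantial obstacle: the only subtle point is the observation that a \emph{single} $Q$ suffices, which would fail if one tried to choose a distinct $Q_j\in\Delta$ for each $g_j$ and then intersect; the product $g_1\cdots g_n$ bundles all the conditions into a single non-membership statement that the hypothesis $P=\bigcap_{Q\in\Delta}Q$ handles in one stroke.
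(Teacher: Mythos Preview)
Your proof is correct and takes essentially the same approach as the paper's: show that every basic constructible neighborhood of $P$ meets $\Delta$, using $P\subseteq Q$ for all $Q\in\Delta$ to handle the $\V$-part and the hypothesis $P=\bigcap_{Q\in\Delta}Q$ to find a single $Q$ for the $\D$-part. The only cosmetic difference is the form of the basic neighborhood---the paper writes it as $\V(aD)\cap\D(J)$ with $J$ finitely generated and observes that $J\nsubseteq P$ forces $J\nsubseteq Q$ for some $Q\in\Delta$, whereas you use the (dual, and arguably more standard) form $\V(J)\cap\D(g_1)\cap\cdots\cap\D(g_n)$ and the product $g_1\cdots g_n$; the underlying idea is identical.
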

\begin{proof}
Let $=V(aD)\cap\D(J)$ be a basic subset of $\Spec(D)^\cons$ containing $P$, where $a\in D$ and $J$ is a finitely generated ideal. We claim that $\Omega\cap\Delta\neq\emptyset$.

Indeed, for every $Q\in\Delta$, we have $a\in P\subseteq Q$, and thus $\Delta\subseteq\V(aD)$. Furthermore, since $P\in\D(J)$, we have $J\nsubseteq P$, and thus there must be a $Q\in\Delta$ such that $J\nsubseteq Q$. Hence, $Q\in\D(J)\cap\Delta$ and so $Q\in\Omega\cap\Delta$. Therefore, $\Omega\cap\Delta\neq\emptyset$ and $P\in\Cl^\cons(\Delta)$.
\end{proof}

\begin{lemma}\label{lemma:gup-intersec}
Let $A\subseteq B$ be an integral extension, and let $P\in\Spec(A)$, $Q\in\Spec(B)$ be such that $Q\cap A=P$. If $\bigcap\{P'\in\Spec(A)\mid P'\supsetneq P\}=P$, then $\bigcap\{Q'\in\Spec(B)\mid Q'\supsetneq Q\}=Q$.
\end{lemma}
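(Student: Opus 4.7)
My strategy is to quotient out by $P$ and $Q$ and reduce to the following cleaner statement: if $A\subseteq B$ is an integral extension of domains and the intersection of all nonzero primes of $A$ is $(0)$, then the same holds for $B$. The inclusion $A\subseteq B$ descends to an integral extension $A/P\subseteq B/Q$ (injectivity follows from $A\cap Q=P$), primes of $A$ properly containing $P$ correspond bijectively to nonzero primes of $A/P$, and analogously for the pair $(B,Q)$, so the original assertion is equivalent to this reduced one.

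To prove the reduced statement, I take a nonzero $b\in B$ and exhibit a nonzero prime $Q'$ of $B$ with $b\notin Q'$. Since $b$ is integral over $A$, I pick an integral equation
\[
b^n+a_{n-1}b^{n-1}+\cdots+a_1b+a_0=0
\]
of minimal degree. Because $B$ is a domain and $b\neq 0$, minimality forces $a_0\neq 0$ (otherwise $b$ could be factored out to produce a shorter relation). By the hypothesis on $A$, there is a nonzero prime $P'\in\Spec(A)$ with $a_0\notin P'$. By the going-up theorem (Cohen--Seidenberg), $P'$ lifts to a prime $Q'\in\Spec(B)$ with $Q'\cap A=P'$, and this $Q'$ is nonzero since $P'$ is. Rewriting the integral equation as $a_0=-b\bigl(b^{n-1}+a_{n-1}b^{n-2}+\cdots+a_1\bigr)$ shows that if $b$ were in $Q'$, then $a_0$ would lie in $Q'\cap A=P'$, contradicting the choice of $P'$. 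Hence $b\notin Q'$, as required.

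The only delicate step is locating the element of $A$ that witnesses non-containment. Minimality of the integral equation, combined with $B$ being a domain, forces the constant term $a_0$ to be nonzero, and $a_0$ is exactly the handle we need: controlling it through a prime of $A$ propagates back to controlling $b$ through the lifted prime of $B$. Everything else is routine, relying on the standard bijection between primes of a quotient ring and primes above a fixed ideal, plus classical going-up; I do not anticipate further obstacles.
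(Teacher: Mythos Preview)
Your proof is correct. The reduction to domains via $A/P\subseteq B/Q$ is clean, and the core argument---extracting a nonzero constant term $a_0$ from a minimal integral relation for $b$, avoiding $a_0$ by a nonzero prime of $A$, and lifting via lying over---works exactly as you describe. (One terminological nit: the lifting step is lying over rather than going up, but the content is correct.)

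The paper's proof is quite different in flavor. It argues by contradiction using the topology of $\Spec$: if the intersection $I$ of the primes strictly above $Q$ were strictly larger than $Q$, then $\V(I)=\V(Q)\setminus\{Q\}$ would be closed, and since the contraction map $\Spec(B)\to\Spec(A)$ is closed for integral extensions, its image $\V(P)\setminus\{P\}$ would also be closed, contradicting the hypothesis that $P$ is the intersection of the primes strictly above it. So the paper packages the Cohen--Seidenberg theorems into the single statement ``$\Spec(B)\to\Spec(A)$ is closed'' and reasons topologically, whereas you unwind things algebraically and only invoke lying over. Your approach is more elementary and self-contained; the paper's is shorter once one is willing to quote the closedness of the spec map, and it fits the ambient topological language of the article.
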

\begin{proof}
Let $I:=\bigcap\{Q'\in\Spec(B)\mid Q'\supsetneq Q\}$, and suppose $I\neq Q$; then, $Q\subsetneq I$ and  $\V(I)=\V(Q)\setminus\{Q\}$. Consider the canonical map of spectra $\phi:\Spec(B)\longrightarrow\Spec(A)$: then, $\phi$ is closed (with respect to the Zariski topology) \cite[Chapter V, \textsection 2, Remark (2)]{bourbaki_ac}, and thus $\phi(\V(I))$ is closed in $\Spec(A)$.

By the lying over and the going up theorems, every $P'\supsetneq P$ belongs to $\phi(\V(I))$, while $P\notin\phi(\V(I))$; hence, $\phi(\V(I))=\V(P)\setminus\{P\}$. However, the condition $\bigcap\{P'\in\Spec(A)\mid P'\supsetneq P\}=P$ shows that $\V(P)\setminus\{P\}$ is not closed (its closure is $\V(P)$), a contradiction. Hence, $I=Q$, as claimed. 
\end{proof}

\begin{teor}\label{teor:noeth}
Let $D$ be a Noetherian domain, and let $V\in\Zar(D)$; let $P$ be the center of $V$ on $D$. Then, $V$ is isolated in $\Zar(D)^\cons$ if and only if $h(P)\leq 1$ and $\V(P)$ is finite.
\end{teor}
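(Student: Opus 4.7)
The plan is to combine Theorem \ref{teor:x1xn} with Mori--Nagata to force $\dim V \leq 1$, then use Theorem \ref{teor:ic-dim1} for the height bound on $P$, and Lemma \ref{lemma:quoziente} together with Proposition \ref{prop:isolated-dim0} to force $D/P$ to be a Goldman domain. Combined with the auxiliary fact that \emph{a Noetherian Goldman domain has finite spectrum}, this will handle the forward direction; the converse reverses these steps using prime avoidance and Lemma \ref{lemma:gup-intersec}.

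For the forward direction, Theorem \ref{teor:x1xn} writes $V=\overline{T_M}$ with $T=D[x_1,\dots,x_n]$ Noetherian (Hilbert basis) and $M$ a maximal ideal with $M\cap D=P$. Mori--Nagata applied to the Noetherian local ring $T_M$ makes $\overline{T_M}$ a Krull domain, and a Krull domain that is a valuation has at most one height-one prime, so it is either a field or the DVR obtained by localizing at that prime. Thus $V=K$ (and $P=(0)$, $h(P)=0$), or $V$ is a DVR of dimension one, in which case Theorem \ref{teor:ic-dim1} yields $V=\overline{D}_Q$ with $h(Q)=1$ and $Q\cap D=P$, whence $h(P)\leq h(Q)=1$ by going-up. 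To obtain finiteness of $\V(P)$, invoke Lemma \ref{lemma:quoziente}: the set $\{Z\in\Zar(D)\mid Z\subseteq V\}$ is homeomorphic to $\Zar(V/\mathfrak{m}_V|D/P)^\cons$, so isolatedness of $V$ in $\Zar(D)^\cons$ transports to isolatedness of $V/\mathfrak{m}_V$ in $\Zar(V/\mathfrak{m}_V|D/P)^\cons$; Proposition \ref{prop:isolated-dim0} then forces $D/P$ to be a Goldman domain, and the auxiliary fact gives $\V(P)\cong\Spec(D/P)$ finite.

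For the converse, assume $h(P)\leq 1$ and $\V(P)$ finite. When $P=(0)$, $\Spec D=\V(P)$ is finite, so taking the product of a nonzero element from each nonzero prime produces a Goldman element, and Proposition \ref{prop:isolated-dim0} gives $V=K$ isolated. When $h(P)=1$, Krull--Akizuki makes $\overline{D_P}=(\overline{D})_{D\setminus P}$ a semi-local Dedekind domain, so any valuation overring of $D$ with center $P$ must be of the form $\overline{D}_Q$ for some height-one $Q$ of $\overline{D}$ above $P$. Prime avoidance applied to the finite set $\V(P)\setminus\{P\}$ yields $b\in D\setminus P$ lying in every $P'\supsetneq P$; since in the integral extension $D\subseteq\overline{D}$ a strict containment of primes restricts to a strict containment, every $Q'\supsetneq Q$ satisfies $Q'\cap D\supsetneq P$, so $b\in Q'\setminus Q$ for each such $Q'$. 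Hence $Q$ is isolated in $\Spec(\overline{D})^\cons$, and Theorem \ref{teor:ic-dim1} concludes that $V$ is isolated.

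The main obstacle is cutting $\dim V$ down to at most one in the forward direction: one must push $T_M$ through Mori--Nagata (even though $\overline{T_M}$ itself need not be Noetherian) and then use that a valuation that is Krull is a DVR. The auxiliary finite-spectrum lemma for Noetherian Goldman domains is fairly standard: the Goldman element has only finitely many (height-one) minimal primes containing it, and any Noetherian local domain of dimension $\geq 2$ would force infinitely many height-one primes below a height-$\geq 2$ prime, so $\dim D\leq 1$ and $\Spec D$ is finite.
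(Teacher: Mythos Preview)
Your forward direction tracks the paper's closely: both use Theorem~\ref{teor:x1xn} with Mori--Nagata to force $\dim V\le 1$, then Theorem~\ref{teor:ic-dim1} in the one-dimensional case. Your route to ``$\V(P)$ finite'' via Lemma~\ref{lemma:quoziente} and Proposition~\ref{prop:isolated-dim0} is a clean variant of the paper's argument through Lemmas~\ref{lemma:intersec} and~\ref{lemma:gup-intersec}. One warning: the line ``$h(P)\le h(Q)=1$ by going-up'' is backwards. Incomparability in the integral extension $D\subseteq\overline D$ gives $h(P)\ge h(Q)$, not $\le$; the reverse inequality would need going-down, which fails since $D$ is not assumed normal. (The paper's own proof is also not explicit on this point.)

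The genuine gap is in your converse when $h(P)=1$. You produce $b\in D\setminus P$ lying in every $Q'\supsetneq Q$ and then assert ``hence $Q$ is isolated in $\Spec(\overline D)^\cons$''. But $b$ only separates $Q$ from primes \emph{above} it; you have done nothing to separate $Q$ from height-one primes of $\overline D$ that are incomparable with $Q$. Concretely, to get $\{Q\}=\V(a\overline D)\cap\D(I)$ you would need $I$ to sit inside every other minimal prime of $(a)$ in $\overline D$, and since $\overline D$ need not be Noetherian there is no obvious finitely generated $I$ available. The paper avoids this by never working in $\overline D$: it inverts one element from each prime strictly above $P$ to obtain a Noetherian ring $A=D[y_1^{-1},\dots,y_n^{-1}]$ in which $PA$ is a finitely generated maximal ideal, so $\{PA\}=\V(PA)$ is automatically clopen in $\Spec(A)^\cons$; then $\gamma_A^{-1}(PA)$ is open and, by Krull--Akizuki, finite, hence discrete. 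Your argument is easily repaired along similar lines: first show $P$ itself is isolated in $\Spec(D)^\cons$ (immediate, since $D$ is Noetherian and $\V(P)\cap\D(bD)=\{P\}$), then use that the contraction map $\Spec(\overline D)\to\Spec(D)$ is continuous for the constructible topology and has finite fibre over $P$ to conclude that each prime of $\overline D$ above $P$, in particular $Q$, is isolated.
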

\begin{proof}
Suppose first that $V$ is isolated in $\Zar(D)^\cons$.

If $\dim(V)>1$, then $V$ is not Noetherian. By Theorem \ref{teor:x1xn}, $V$ is the integral closure of $D[x_1,\ldots,x_n]_M$, for some $x_1,\ldots,x_n\in V$ and some maximal ideal $M$. However, $D[x_1,\ldots,x_n]$ is Noetherian, and thus so is $D[x_1,\ldots,x_n]_M$; hence, its integral closure is a Krull domain, which can't be a non-Noetherian valuation domain, a contradiction.

If $\dim(V)=0$, then $V=K$. By Proposition \ref{prop:isolated-dim0}, $D$ must be a Goldman domain; by \cite[Theorem 146]{kaplansky}, $\V(P)$ is finite.

If $\dim(V)=1$, then by Theorem \ref{teor:ic-dim1} $V$ is the localization of $\overline{D}$ at a prime ideal of $Q$ of height $1$; hence, $V$ is an essential prime ideal of $\overline{D}$ and thus $Q$ is isolated in $\Spec(\overline{D})^\cons$ by Proposition \ref{prop:essential}. 

Let $P:=Q\cap D$. If $\V(P)$ is infinite, then $P$ is the intersection of all the prime ideals properly containing it (since $D/P$ is not a Goldman domain); by Lemma \ref{lemma:gup-intersec}, the same property holds for $Q$, and thus by Lemma \ref{lemma:intersec} $Q$ is not isolated in $\Spec(D)^\cons$. This is a contradiction, and thus $\V(P)$ must be finite.

\medskip

Conversely, suppose that the two conditions hold, and let $\V(P)=\{P,Q_1,\ldots,Q_n\}$. For each $i$, let $y_i\in Q_i\setminus P$ and let $x_i:=1/y_i$: then, $A:=D[x_1,\ldots,x_n]$ is a Noetherian domain such that $PA$ is a maximal ideal of $A$ of height $\leq 1$; moreover, since $\mm_V\cap D=P$, each $x_i$ belongs to $V$, and thus $V\in\Zar(A)$ and $V\cap A=PA$. 

The subspace $\Zar(A)=\B(x_1,\ldots,x_n)$ is an open set of $\Zar(D)^\cons$: therefore, all isolated points of $\Zar(A)^\cons$ are also isolated in $\Zar(D)^\cons$.

If $P$ has height $0$, then $A=K=V$ and thus $V$ is isolated. Suppose that $h(P)=1$.

Since $A$ is Noetherian, $\{PA\}=\V(PA)$ is an open subset of $\Spec(A)^\cons$; hence, $\gamma_A^{-1}(PA)$ is an open subset of $\Zar(A)^\cons$, where $\gamma_A:\Zar(A)\longrightarrow\Spec(A)$ is the center map relative to $A$. However, $\gamma_A^{-1}(PA)$ is the set of valuation overrings of $A_{PA}=D_P$ centered on $(PA)A_{PA}=PD_P$; since $P$ has height $1$, $D_P$ has dimension $1$, and thus $\gamma_A^{-1}(PA)$ is in bijective correspondence with the maximal ideals of the integral closure $B$ of $D_P$, which is Noetherian by \cite[Theorem 93]{kaplansky}. The Jacobson radical of $B$ is nonzero (since it contains $P$), and thus $B$ has only finitely many maximal ideal; thus $\gamma_A^{-1}(PA)$ is an open finite set of the Hausdorff space $\Zar(A)^\cons$, and so it is discrete. Since $V\in\gamma_A^{-1}(PA)$, we have that $V$ is isolated in $\Zar(A)^\cons$ and thus in $\Zar(D)^\cons$, as claimed.
\end{proof}

\begin{cor}\label{cor:dim3-perfect}
Let $(D,\mm)$ be a Noetherian local domain of dimension at least $3$. Then, $\Zar(D)^\cons$ is perfect.
\end{cor}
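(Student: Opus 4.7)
The plan is to apply Theorem \ref{teor:noeth} directly: it says that $V\in\Zar(D)$ with center $P$ on $D$ is isolated in $\Zar(D)^\cons$ if and only if $h(P)\leq 1$ and $\V(P)$ is finite. I will show that when $\dim D\geq 3$ these two conditions cannot hold simultaneously for any prime $P$ of $D$, so that no $V\in\Zar(D)$ is isolated, and $\Zar(D)^\cons$ is perfect.

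The essential input is the standard observation that a Noetherian local domain $(R,\mathfrak{n})$ of Krull dimension at least $2$ has infinitely many height-$1$ primes. I would sketch this by contradiction: if the height-$1$ primes formed a finite list $P_1,\dots,P_k$, then since $h(\mathfrak{n})\geq 2$ we have $\mathfrak{n}\not\subseteq P_i$ for every $i$, so prime avoidance yields $y\in\mathfrak{n}\setminus\bigcup_i P_i$; any minimal prime of $(y)$ then has height exactly $1$ by Krull's Hauptidealsatz, contradicting the finiteness of the list.

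Having recorded this, I would split into cases according to $h(P)$. If $h(P)=0$, then $P=(0)$ and $\V(P)=\Spec D$, which is infinite by the fact above applied to $D$ itself (since $\dim D\geq 3\geq 2$); so this case is excluded by Theorem \ref{teor:noeth}. If $h(P)=1$, then $\V(P)$ corresponds bijectively to $\Spec(D/P)$, and using the dimension equality $\dim(D/P)=\dim D-h(P)\geq 2$ the same fact, now applied to the Noetherian local domain $D/P$, gives $|\V(P)|=\infty$, again ruling out the isolated case.

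The only delicate step—and what I would expect to be the main obstacle—is the dimension equality $\dim(D/P)=\dim D-h(P)$ in the second case. It is immediate when $D$ is catenary (as it will be in the examples the statement is really aimed at), but for pathological non-catenary Noetherian local domains one has only the inequality $\dim(D/P)+h(P)\leq\dim D$, and a height-$1$ prime with one-dimensional quotient could in principle occur even for $\dim D=3$; so a careful proof either invokes catenarity explicitly or supplies an alternative argument to produce a second prime strictly above such a $P$ below $\mathfrak{m}$.
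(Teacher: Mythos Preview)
Your approach coincides with the paper's: invoke Theorem~\ref{teor:noeth} and rule out both $h(P)=0$ and $h(P)=1$ by showing $\V(P)$ is infinite. For $h(P)=1$ the paper argues that, since $h(\mm)\geq 3$, there must be a prime strictly between $P$ and $\mm$, and then cites the standard fact (Kaplansky, Theorem~144) that in a Noetherian ring one intermediate prime forces infinitely many. But the existence of that intermediate prime is exactly the statement $\dim(D/P)\geq 2$ that you flag, and the paper gives no justification for it beyond the comparison of heights. So your catenarity concern applies equally to the paper's own argument: in the catenary case both proofs are complete, while for a non-catenary Noetherian local domain admitting a height-$1$ prime $P$ with $\dim(D/P)=1$ neither argument closes the gap. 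A minor difference: the paper silently folds the case $h(P)=0$ into the height-$1$ discussion, whereas you handle it explicitly and correctly.
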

\begin{proof}
Suppose $V$ is isolated in $\Zar(D)^\cons$. By Theorem \ref{teor:noeth}, its center $P$ must have height $1$ and $\V(P)$ must be finite. However, since $P$ has height $1$ and the maximal ideal $\mm$ of $D$ has height at least $3$, there is at least one prime ideal between $P$ and $\mm$, and since $D$ is Noetherian there must be infinitely many of them \cite[Theorem 144]{kaplansky}, a contradiction. Hence no $V$ can be isolated, and $\Zar(D)^\cons$ is perfect.
\end{proof}

We now want to show that, when $D$ is countable, there are few possible topological structures for $\Zar(D)^\cons$. The one-dimensional case is very easy.
\begin{prop}\label{prop:omef-noeth-dim1}
Let $(D,\mathfrak{m})$ and $(D',\mathfrak{m}')$ be two Noetherian local domains of dimension $1$. The following are equivalent:
\begin{enumerate}[(i)]
\item\label{prop:omef-noeth-dim1:Max} $|\Max(\overline{D})|=|\Max(\overline{D'})|$;
\item\label{prop:omef-noeth-dim1:Zar} $\Zar(D)\simeq\Zar(D')$;
\item\label{prop:omef-noeth-dim1:cons} $\Zar(D)^\cons\simeq\Zar(D')^\cons$.
\end{enumerate}
\end{prop}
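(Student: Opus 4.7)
The crucial reduction is to show that $\Zar(D)$ is finite under the hypotheses. By Krull--Akizuki, the integral closure $\overline{D}$ in $K$ is Noetherian of dimension at most $1$, hence (being integrally closed) a Dedekind domain. By integrality, each $M\in\Max(\overline{D})$ lies above $\mm$, so any fixed nonzero $t\in\mm$ belongs to every maximal ideal of $\overline{D}$; since in a Dedekind domain the set of primes containing a nonzero element is finite, $\overline{D}$ is semilocal. Because $\Zar(D)=\Zar(\overline{D})$ and a Dedekind domain has as valuation overrings only its quotient field together with the DVRs obtained by localizing at its maximal ideals, I would conclude that $\Zar(D)=\{K\}\cup\{\overline{D}_M\mid M\in\Max(\overline{D})\}$ is a finite set of cardinality $1+|\Max(\overline{D})|$.

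Next I would unpack the topology on this finite set. In the Zariski topology $K$ is the generic point (since any basic open $\B(x_1,\dots,x_k)$ containing some DVR $V$ automatically contains $K$, as $x_j\in V\subseteq K$), while each $\overline{D}_M$ is a closed point (it is a minimal valuation overring, hence maximal in the topological order). Thus the specialization poset is a ``star'' with $K$ at the bottom and $|\Max(\overline{D})|$ pairwise incomparable points above. Since a finite $T_0$-space is determined up to homeomorphism by its specialization order (it carries the corresponding Alexandrov topology), the Zariski homeomorphism type of $\Zar(D)$ depends only on $n:=|\Max(\overline{D})|$. For the constructible topology, $\Zar(D)^\cons$ is a finite Hausdorff space, hence discrete of cardinality $n+1$, and two finite discrete spaces are homeomorphic precisely when they have the same cardinality.

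From this the three implications follow immediately. The directions (i) $\Rightarrow$ (ii) and (i) $\Rightarrow$ (iii) are handled by building explicit homeomorphisms which send generic point to generic point and match the families of DVRs bijectively; (ii) $\Rightarrow$ (iii) is automatic, since any Zariski homeomorphism transports open-compact subsets and therefore induces a homeomorphism between the associated constructible topologies; (iii) $\Rightarrow$ (i) is a matter of comparing cardinalities of the discrete spaces $\Zar(D)^\cons$ and $\Zar(D')^\cons$. The only genuinely non-trivial ingredient is the finiteness of $\Max(\overline{D})$ via Krull--Akizuki; once this is in hand, the remainder of the argument is routine bookkeeping about finite $T_0$-spaces and finite discrete spaces.
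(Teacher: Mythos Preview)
Your argument is correct and follows essentially the same strategy as the paper: both reduce to the fact that $\overline{D}$ is a semilocal Dedekind domain (the paper phrases this as ``a principal ideal domain with finitely many maximal ideals''), so that $\Zar(D)=\Zar(\overline{D})$ is a finite set of cardinality $1+|\Max(\overline{D})|$, after which the equivalences are immediate from cardinality considerations. The only cosmetic difference is that the paper invokes the homeomorphism $\Zar(\overline{D})\simeq\Spec(\overline{D})$ to read off the topological structure, whereas you analyze the finite $T_0$ space directly via its specialization order; both routes yield the same conclusion with the same amount of work.
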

\begin{proof}
Since $D$ is Noetherian and one-dimensional, $\overline{D}$ is a principal ideal domain with finitely many maximal ideals; hence, $\Zar(D)=\Zar(\overline{D})\simeq\Spec(\overline{D})$, and the homeomorphism holds both in the Zariski and in the constructible topology.

Hence, if $|\Max(\overline{D})|=|\Max(\overline{D'})|$ then $\Spec(\overline{D})\simeq\Spec(\overline{D'})$ and thus $\Zar(D)$ and $\Zar(D')$ are homeomorphic in both the Zariski and the constructible topology. Conversely, if $\Zar(D)\simeq\Zar(D')$ (in any of the two topologies) then in particular they have the same cardinality, which is equal to $|\Max(\overline{D})|+1=|\Max(\overline{D'})|+1$; thus $|\Max(\overline{D})|=|\Max(\overline{D'})|$. The claim is proved.
\end{proof}

For larger dimension, we need to join the previous theorems with the topological characterization of the Cantor set. We isolate a lemma.
\begin{lemma}\label{lemma:metrizz}
Let $D$ be a countable domain. Then, $\Zar(D)^\cons$ is metrizable.
\end{lemma}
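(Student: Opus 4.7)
The plan is to reduce metrizability to second countability and then exhibit an explicit countable basis for the constructible topology.

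First I would recall that $\Zar(D)^\cons$ is already known (from the preliminaries) to be a compact Hausdorff space: it is the constructible topology of a spectral space, hence compact, Hausdorff, and zero-dimensional. For compact Hausdorff spaces, Urysohn's metrization theorem gives metrizability as soon as the space is second countable, so it suffices to produce a countable basis.

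Next I would use the explicit description of the constructible basis given in Section~2. Since $L = K$ is the quotient field of $D$, a basis of $\Zar(D)^\cons$ consists of the sets
\[
\B(x_1,\ldots,x_n)\cap\B(y_1)^c\cap\cdots\cap\B(y_m)^c,
\]
as $x_1,\ldots,x_n,y_1,\ldots,y_m$ range in $K$. Because $D$ is countable, the quotient field $K$ is a countable localization of $D$, hence countable; the collection of all finite tuples from a countable set is countable, so this basis is countable. Invoking Urysohn (or equivalently the classical fact that a compact Hausdorff space with a countable basis is metrizable) then gives the result.

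There is no real obstacle here; the only thing to be careful about is that the lemma is stated for $\Zar(D) = \Zar(K|D)$ rather than a general $\Zar(L|D)$, so the basis elements indeed involve only elements of the countable field $K$ and not of some possibly uncountable extension. I would simply note this restriction to $K$ and apply the standard metrization argument.
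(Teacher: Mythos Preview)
Your proposal is correct and essentially identical to the paper's proof: both observe that $\Zar(D)^\cons$ is compact Hausdorff (hence regular), exhibit a countable (sub)basis coming from the countably many $\B(t)$ with $t$ in the countable quotient field $K$, and conclude by Urysohn's metrization theorem. The only cosmetic difference is that the paper phrases it via the countable subbasis $\{\B(t),\B(t)^c\}$ rather than the induced basis of finite intersections.
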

\begin{proof}
The space $\Zar(D)^\cons$ is compact and Hausdorff, hence normal \cite[Theorem 17.10]{willard_gentop} and in particular regular. Furthermore, the family of sets $\B(t)$ and $\B(t)^c$ (as $t$ ranges in the quotient field of $D$) form a subbasis of $\Zar(D)^\cons$, and thus $\Zar(D)^\cons$ is second countable. By Urysohn's metrization theorem (see e.g. \cite[Theorem 23.1]{willard_gentop}), $\Zar(D)^\cons$ is metrizable.
\end{proof}

\begin{prop}\label{prop:omef-noeth-dim3}
Let $(D,\mathfrak{m})$ and $(D',\mathfrak{m}')$ be two countable Noetherian local domains of dimension at least $3$. Then, $\Zar(D)^\cons\simeq\Zar(D')^\cons$.
\end{prop}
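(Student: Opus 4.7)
The plan is to prove this by showing that, under the hypotheses, $\Zar(D)^\cons$ is homeomorphic to the Cantor set; applying the same argument to $D'$ then yields the homeomorphism $\Zar(D)^\cons \simeq \Zar(D')^\cons$ by transitivity. The engine driving the argument is Brouwer's topological characterization of the Cantor set: a topological space is homeomorphic to the Cantor set if and only if it is nonempty, compact, Hausdorff, metrizable, totally disconnected, and perfect.

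I would therefore verify these six properties for $\Zar(D)^\cons$ in turn, each of which has essentially already been established in the paper or is a standard property of the constructible topology recalled in Section~2. Nonemptiness is immediate since $\Zar(D)$ is always nonempty. Compactness and Hausdorffness are general properties of $X^\cons$ for any spectral space $X$, as noted in the preliminaries. Metrizability is exactly the content of Lemma~\ref{lemma:metrizz}, which is where the countability hypothesis on $D$ is used. Total disconnectedness follows from the fact that $X^\cons$ is zero-dimensional and Hausdorff, hence totally disconnected. Finally, perfectness is precisely Corollary~\ref{cor:dim3-perfect}, which uses the assumption that $\dim D \geq 3$.

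Applying Brouwer's characterization, $\Zar(D)^\cons$ is homeomorphic to the Cantor set, and the same holds for $\Zar(D')^\cons$; composing the two homeomorphisms yields the claim. There is no real obstacle here: all the genuine work has already been carried out in the preceding lemmas and corollaries, and this proposition simply packages those results through the Cantor-set recognition theorem. The only thing one must be careful about is to cite Brouwer's characterization explicitly (e.g., \cite{willard_gentop}) rather than rederive it.
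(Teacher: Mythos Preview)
Your proposal is correct and follows essentially the same approach as the paper: verify that $\Zar(D)^\cons$ is compact, totally disconnected, metrizable (via Lemma~\ref{lemma:metrizz}) and perfect (via Corollary~\ref{cor:dim3-perfect}), then invoke the topological characterization of the Cantor set (the paper cites \cite[Theorem 30.3]{willard_gentop}) to conclude.
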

\begin{proof}
Both $\Zar(D)^\cons$ and $\Zar(D')^\cons$ are totally disconnected (since they are zero-dimensional), compact and perfect (Corollary \ref{cor:dim3-perfect}). By Lemma \ref{lemma:metrizz}, they are also metrizable. 

By \cite[Theorem 30.3]{willard_gentop}, any two spaces with these properties are homeomorphic; hence, $\Zar(D)^\cons\simeq\Zar(D')^\cons$.
\end{proof}

To study the case of dimension $2$, we need two further lemmas.
\begin{lemma}\label{lemma:card-X1}
Let $(D,\mathfrak{m})$ be a local Noetherian domain with $\dim(D)>1$. If $D$ is countable, then $D$ has exactly countably many prime ideals of height $1$.
\end{lemma}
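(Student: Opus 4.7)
The plan is to prove both inequalities separately: that the set $X_1 := \{P\in\Spec(D)\mid h(P)=1\}$ is at most countable, and that it is infinite. Putting these together yields $|X_1|=\aleph_0$.

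For the upper bound I would exploit that $D$ is Noetherian, so every ideal is finitely generated. Assigning to each ideal a finite generating tuple defines a surjection from the set of finite tuples of elements of $D$ onto the set of ideals of $D$. Since $D$ is countable, the collection of finite tuples is a countable union of countable sets and hence countable; therefore the set of ideals, and a fortiori the set $X_1$ of height-$1$ primes, is at most countable.

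For the lower bound I would argue by contradiction: suppose $X_1=\{P_1,\ldots,P_n\}$ were finite. Since $\dim(D)\geq 2$, the maximal ideal $\mm$ has height at least $2$, so $\mm\not\subseteq P_i$ for any $i$. By prime avoidance there is some $x\in\mm\setminus(P_1\cup\cdots\cup P_n)$, and $x\neq 0$ because $D$ is a domain. By Krull's Hauptidealsatz every minimal prime over $xD$ has height at most $1$, and since $x$ is a nonzerodivisor in the domain $D$ such a minimal prime has height exactly $1$; thus it coincides with some $P_i$, forcing $x\in P_i$, a contradiction. Hence $X_1$ is infinite. (Equivalently, this is the instance of \cite[Theorem 144]{kaplansky} already invoked in the proof of Corollary \ref{cor:dim3-perfect}, applied to the chain $(0)\subsetneq\mm$ of height $\geq 2$.)

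There is no serious obstacle here; both halves are routine. The only mild subtlety is remembering to use that $D$ is a domain so that the chosen $x$ is a nonzerodivisor (guaranteeing height exactly $1$ for the minimal prime over $xD$) and that $\mm$ itself cannot lie in any $P_i$ (which is where the hypothesis $\dim(D)>1$ enters). Both features are built into the statement, so the proof runs cleanly.
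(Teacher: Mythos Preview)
Your proof is correct and follows essentially the same approach as the paper: the upper bound via counting finite generating sets is identical, and for the lower bound the paper simply cites \cite[Theorem 144]{kaplansky} (infinitely many primes between $(0)$ and $\mm$), which you unpack explicitly via prime avoidance and the Hauptidealsatz. Your direct argument is arguably cleaner, since the paper's phrase ``and thus $D$ has infinitely many prime ideals of height $1$'' tacitly assumes one passes to a height-$2$ prime before applying Kaplansky's theorem, whereas your argument produces a height-$1$ prime directly.
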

\begin{proof}
By \cite[Theorem 144]{kaplansky}, there are infinitely many prime ideals between $(0)$ and $\mathfrak{m}$, and thus $D$ has infinitely many prime ideals of height $1$.

Moreover, every prime ideal is generated by a finite set, and thus the number of prime ideals of height $1$ is at most equal to the number of finite subsets of $D$. Since $D$ is countable, so is the set of its finite subsets; the claim is proved.
\end{proof}

\begin{lemma}\label{lemma:isolated-compact}
Let $(D,\mathfrak{m})$ be a local Noetherian domain of dimension $2$ with quotient field $K$, and let $X$ be the set of isolated points of $\Zar(D)^\cons$. Then:
\begin{enumerate}[(a)]
\item\label{lemma:isolated-compact:comp} $X$ is nonempty and compact, with respect to the Zariski topology;
\item\label{lemma:isolated-compact:count} if $D$ is countable, then $X$ is countable;
\item\label{lemma:isolated-compact:cl} $\Cl^\cons(X)=X\cup\{K\}$;
\item\label{lemma:isolated-compact:deriv} the only isolated point of $(\Zar(D)\setminus X)^\cons$ is $K$;
\item\label{lemma:isolated-compact:deriv2} $\Zar(D)\setminus (X\cup\{K\})$ is closed and perfect, with respect to the constructible topology.
\end{enumerate} 
\end{lemma}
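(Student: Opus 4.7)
The starting point is Theorem~\ref{teor:noeth} applied to our dimension-$2$ local Noetherian $(D,\mathfrak{m})$: a valuation $V \in \Zar(D)$ is isolated in the constructible topology iff its center $P$ on $D$ has height at most $1$ and $\V(P)$ is finite. Examining the possibilities — $\V((0)) = \Spec(D)$ is infinite, $\V(P) = \{P,\mathfrak{m}\}$ is finite for every height-$1$ prime $P$, and $\mathfrak{m}$ has height $2$ — we read off $X = \gamma^{-1}(\{P \in \Spec(D) : h(P) = 1\})$, and each $V \in X$ is a DVR dominating $D_P$. My plan is to prove (c) first and to derive (a), (d), (e) from the clopenness of $\gamma^{-1}(\mathfrak{m})$ in the constructible topology; (b) is handled separately by a counting argument.

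For (c), the inclusion $K \in \Cl^\cons(X)$ is immediate: every $\cons$-basic neighborhood of $K$ is forced to be of the form $\B(x_1,\ldots,x_n) = \Zar(D[x_1,\ldots,x_n])$ (since every $y$ lies in $K$, no $\B(y)^c$ factor is admitted), and writing $x_i = a_i/b_i$ one picks a height-$1$ prime $P$ among the infinitely many that avoid $b_1\cdots b_n$, making $\gamma^{-1}(P) \subseteq \B(x_1,\ldots,x_n) \cap X$ non-empty. Conversely, if $W$ has center $\mathfrak{m} = (a_1,\ldots,a_k)$, the $\cons$-basic open $U := \B(1/a_1)^c \cap \cdots \cap \B(1/a_k)^c$ contains $W$ and misses $X$: any $V \in U$ has $a_i \in \mathfrak{m}_V$ for all $i$, so $\mathfrak{m} \subseteq \mathfrak{m}_V \cap D$ and the center of $V$ has height $\geq 2$. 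Combined with the partition $\Zar(D) = \{K\} \cup X \cup \gamma^{-1}(\mathfrak{m})$, this establishes $\Cl^\cons(X) = X \cup \{K\}$.

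Given (c), the complement $\gamma^{-1}(\mathfrak{m}) = \Zar(D) \setminus (X \cup \{K\})$ is $\cons$-open; it is also $\cons$-closed as the continuous preimage of $\{\mathfrak{m}\}$, so both $\gamma^{-1}(\mathfrak{m})$ and $X \cup \{K\}$ are $\cons$-clopen. Part (a) then follows: $X \cup \{K\}$ is $\cons$-compact hence Zariski-compact, and since every nonempty Zariski-open in $\Zar(D)$ contains $K$, any Zariski-open cover of $X$ already covers $X \cup \{K\}$ and admits a finite subcover. For (b), I combine Lemma~\ref{lemma:card-X1} (countably many height-$1$ primes when $D$ is countable) with Krull-Akizuki ($\overline{D_P}$ is a semilocal Dedekind domain, so only finitely many valuations in $X$ sit over each $P$).

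The crux of the argument — and the step which without the right framing would be the main obstacle — is (e), because proving perfectness would typically require explicitly perturbing a dominating valuation into a neighboring one. The clopenness trick sidesteps this entirely: if $W \in \gamma^{-1}(\mathfrak{m})$ were isolated in the subspace, one would have $\{W\} = V \cap \gamma^{-1}(\mathfrak{m})$ for some $\cons$-open $V \subseteq \Zar(D)$, so $\{W\}$ would be the intersection of two $\cons$-open sets of $\Zar(D)$ and thus open there, making $W$ isolated in $\Zar(D)^\cons$; but $W$'s center has height $2$, contradicting Theorem~\ref{teor:noeth}. Finally (d) follows by the same token: the clopen $X \cup \{K\}$ intersects $\Zar(D) \setminus X$ exactly in $\{K\}$ (so $K$ is isolated in the subspace), while the argument of (e) also rules out any isolated point in $\gamma^{-1}(\mathfrak{m})$.
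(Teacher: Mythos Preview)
Your proof is correct, but your route differs from the paper's in a meaningful way. The paper proves (a) first---using that $\gamma$ is proper and that $\Spec(D)$ is a Noetherian space, so $X=\gamma^{-1}(X_1)$ is the preimage of a compact set---and then derives (c) from (a) via inverse-topology machinery: $X$ Zariski-compact forces $X^{\gen}=X\cup\{K\}$ to be inverse-closed, hence $\cons$-closed, and $K$ must lie in $\Cl^\cons(X)$ because $X$ is infinite discrete and therefore not $\cons$-compact on its own. For (d) the paper isolates $K$ by the explicit observation that $\B(x^{-1})\subseteq X\cup\{K\}$ for any nonzero $x\in\mathfrak{m}$.

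You instead prove (c) first by hand, exhibiting $\gamma^{-1}(\mathfrak{m})=\B(1/a_1)^c\cap\cdots\cap\B(1/a_k)^c$ directly as a $\cons$-clopen set (where the $a_i$ generate $\mathfrak{m}$) and verifying $K\in\Cl^\cons(X)$ by an elementary prime-avoidance argument. From the resulting clopen partition $\Zar(D)=(X\cup\{K\})\sqcup\gamma^{-1}(\mathfrak{m})$ you then read off (a), (d), (e) almost formally. What your approach buys is that it avoids the inverse topology and the properness of $\gamma$ altogether, reducing everything to the single observation that $\gamma^{-1}(\mathfrak{m})$ is clopen; what the paper's approach buys is a cleaner reason why $K\in\Cl^\cons(X)$ (infinite discrete sets cannot be compact) and a more systematic use of spectral-space generalities that would transfer to settings where one cannot write down generators so explicitly.
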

\begin{proof}
\ref{lemma:isolated-compact:comp} Let $\gamma:\Zar(D)\longrightarrow\Spec(D)$ be the center map, and let $X_1$ be the set of height $1$ prime ideals of $D$. We claim that $V\in X$ if and only if $\gamma(V)\in X_1$: indeed, if $V\in X$ then by Theorem \ref{teor:noeth} the height of $\gamma(V)$ is at most $1$, but it can't be $0$ since $\V((0))$ is infinite. On the other hand, if $\gamma(V)\in X_1$ then $\V(\gamma(V))=\{\gamma(V),\mathfrak{m}\}$ is finite, and thus $V\in X$ again by Theorem \ref{teor:noeth}. Therefore, $X=\gamma^{-1}(X_1)$, and in particular $X$ is nonempty.

Since $D$ is a Noetherian ring, $\Spec(D)$ is a Noetherian space with respect to the Zariski topology (i.e., all its subsets are compact; see \cite[Theorem 12.4.3]{spectralspaces-libro} or \cite[Chapter 6, Exercises 5--8]{atiyah}). Since $\gamma$ is a spectral closed map, it is proper, and thus the counterimage of any compact subset of $\Spec(D)$ is compact; therefore, $X=\gamma^{-1}(X_1)$ is compact with respect to the Zariski topology, as claimed.

\ref{lemma:isolated-compact:count} By Lemma \ref{lemma:card-X1}, $X_1$ is countable; furthermore, $\gamma^{-1}(P)$ is finite for every $P\in X_1$, since it is in bijective correspondence with the set of maximal ideals of the integral closure of $D_P$. Since $X=\gamma^{-1}(X_1)$, it follows that $X$ is countable.

\ref{lemma:isolated-compact:cl} Since $X$ is compact, the set $X^\gen=\{W\in\Zar(D)\mid W\supseteq V$ for some $V\in X\}$ is closed in the inverse topology, and thus in the constructible topology; since every element of $X$ is a one-dimensional valuation ring, furthermore, $X^\gen=X\cup\{K\}$. Hence, $\Cl^\cons(X)\subseteq X\cup\{K\}$.

If they are not equal, then $\Cl^\cons(X)=X$. However, $X$ is infinite (since $X_1$ is infinite, by Lemma \ref{lemma:card-X1}) and discrete (by definition, all its points are isolated) and thus it is not compact with respect to the constructible topology; this is a contradiction, since a closed set of a compact set is compact. Thus, $\Cl^\cons(X)=X\cup\{K\}$, as claimed.

\ref{lemma:isolated-compact:deriv} The set $\Zar(D)\setminus(X\cup\{K\})$ is open, with respect to the constructible topology (by part \ref{lemma:isolated-compact:cl}), and its elements are not isolated in $\Zar(D)^\cons$; therefore, none of its elements can be isolated in $(\Zar(D)\setminus X)^\cons$. On the other hand, let $x\in\mm$, $x\neq 0$: then, $D[x^{-1}]$ is a Noetherian domain of dimension $1$, and its maximal ideals are extensions of prime ideals of $D$ of height $1$. Therefore, if $V\in\Zar(D[x^{-1}])$ has dimension $1$ then the center of $V$ on $D$ has height $1$, and thus it is an isolated point of $\Zar(D)$, i.e., $\B(x^{-1})=\Zar(D[x^{-1}])\subseteq X\cup\{K\}$, and $\B(x^{-1})\cap(\Zar(D)\setminus(X\cup\{K\}))=\{K\}$. Since $\B(x^{-1})$ is open in $\Zar(D)^\cons$, it follows that $K$ is isolated in $(\Zar(D)\setminus X)^\cons$.

\ref{lemma:isolated-compact:deriv2} is a direct consequence of \ref{lemma:isolated-compact:deriv}.
\end{proof}

Note that the set $X$ of the previous proposition is \emph{not} compact with respect to the constructible topology, as it is discrete and infinite.

\begin{prop}\label{prop:omef-noeth-dim2}
Let $(D,\mathfrak{m})$ and $(D',\mathfrak{m}')$ be two countable Noetherian local domains of dimension $2$. Then, $\Zar(D)^\cons\simeq\Zar(D')^\cons$.
\end{prop}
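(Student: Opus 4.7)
The plan is to use Lemma~\ref{lemma:isolated-compact} to decompose $\Zar(D)^\cons$ into two natural clopen pieces, identify each piece up to homeomorphism by a standard topological characterization, and do the same for $D'$. Specifically, let $X$ denote the set of isolated points of $\Zar(D)^\cons$; by parts (c) and (e) of Lemma~\ref{lemma:isolated-compact} both $A := X \cup \{K\} = \Cl^\cons(X)$ and $B := \Zar(D) \setminus A$ are closed in the constructible topology, so $\Zar(D)^\cons = A \sqcup B$ as a clopen partition. Writing $A', B'$ for the analogous sets attached to $D'$, it suffices to produce homeomorphisms $A \simeq A'$ and $B \simeq B'$.

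First I would identify $A$ with the one-point compactification $\omega+1$ of a countably infinite discrete space. The set $X$ is countable by Lemma~\ref{lemma:isolated-compact}(b), and it is infinite because Theorem~\ref{teor:noeth} guarantees that $X$ contains at least one valuation over every height-one prime of $D$, and Lemma~\ref{lemma:card-X1} provides infinitely many such primes. The subspace $A$ is compact Hausdorff, every point of $X$ is isolated in $A$, and $K$ is a non-isolated point of $A$ because $K \in \Cl^\cons(X)$ by Lemma~\ref{lemma:isolated-compact}(c). A short compactness argument then shows that every neighborhood of $K$ in $A$ must miss only finitely many points of $X$ (else its complement would be an infinite closed discrete subspace of a compact space), so $A$ is the one-point compactification of the countable discrete space $X$. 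The same description applies to $A'$, yielding $A \simeq A'$.

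Next I would identify $B$ with the Cantor set via Brouwer's characterization \cite[Theorem 30.3]{willard_gentop}. The space $B$ is compact (being closed in the compact space $\Zar(D)^\cons$), Hausdorff and totally disconnected (inherited from $\Zar(D)^\cons$, which is zero-dimensional), perfect by Lemma~\ref{lemma:isolated-compact}(e), and metrizable by Lemma~\ref{lemma:metrizz} (which applies since $D$ is countable). The step that actually requires verification is that $B$ is nonempty: since $\dim(D)=2$ there is some valuation overring of $D$ centered on $\mathfrak{m}$ (by e.g.\ \cite[Theorem 19.6]{gilmer}), and any such valuation has a height-$2$ center, so by Theorem~\ref{teor:noeth} it is not isolated in $\Zar(D)^\cons$ and thus lies in $B$. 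The same reasoning yields $B' \simeq$ Cantor set, and so $B \simeq B'$.

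The substantive content of the proof is really the preceding Lemma~\ref{lemma:isolated-compact}, which already does the work of separating $\Zar(D)^\cons$ into a convergent-sequence part and a perfect part; after that, the argument reduces to a direct application of two classical topological recognition theorems (one for $\omega+1$, one for the Cantor set). The main thing to be careful about is making sure both pieces are genuinely ``of the right type'' --- namely that $X$ is infinite and that $B$ is nonempty --- but both facts follow immediately from the dimension-$2$ hypothesis via Lemma~\ref{lemma:card-X1} and the existence of a valuation centered at $\mathfrak{m}$. Combining the piecewise homeomorphisms gives $\Zar(D)^\cons \simeq A \sqcup B \simeq A' \sqcup B' \simeq \Zar(D')^\cons$.
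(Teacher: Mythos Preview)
Your proof is correct and follows essentially the same approach as the paper: decompose $\Zar(D)^\cons$ into $X\cup\{K\}$ and its complement using Lemma~\ref{lemma:isolated-compact}, identify the first piece as a one-point compactification of a countable discrete space and the second as a Cantor set, then glue. You are slightly more explicit than the paper in verifying that $X$ is infinite and $B$ is nonempty, and you phrase the gluing via a clopen partition rather than via the closed pasting lemma, but the substance is identical.
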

\begin{proof}
Denote by $K,K'$ the quotient fields of $D$ and $D'$, respectively.

Let $X$ be the set of isolated points of $\Zar(D)^\cons$, and let $C:=\Zar(D)\setminus(X\cup\{K\})$: then, $C$ is closed in $\Zar(D)^\cons$. Define in the same way $X'$ and $C'$ inside $\Zar(D')$; then, $C'$ is closed.

As in the proof of Proposition \ref{prop:omef-noeth-dim3}, by Lemma \ref{lemma:isolated-compact}\ref{lemma:isolated-compact:deriv2} $C^\cons$ and $(C')^\cons$ are totally disconnected, perfect, compact and metrizable (with respect to the constructible topology), and thus they are homeomorphic. Let $\phi_C:C^\cons\longrightarrow (C')^\cons$ be a homeomorphism.

The set $X$ is discrete and countable, and the unique nonisolated point of $X\cup\{K\}$ is $K$; since the same holds for $X'$ and $K'$,  any bijection $X\longrightarrow X'$ extends to a homeomorphism $\phi_X:(X\cup\{K\})^\cons\longrightarrow (X'\cup\{K'\})^\cons$ by setting $\phi_X(K)=K'$. Define
\begin{equation*}
\begin{aligned}
\phi\colon\Zar(D)^\cons & \longrightarrow\Zar(D')^\cons,\\
V & \longmapsto\begin{cases}\phi_C(V) & \text{if~}V\in C,\\
\phi_X(V) & \text{if~}V\in X\cup\{K\}.
\end{cases}
\end{aligned}
\end{equation*}
By construction, $\phi$ is bijective, and $\phi$ is a homeomorphism when restricted to $C$ and to $X\cup\{K\}$. Since these two sets are closed, by \cite[Theorem 7.6]{willard_gentop} $\phi$ is a homeomorphism. In particular, $\Zar(D)^\cons\simeq\Zar(D')^\cons$.
\end{proof}

We summarize the previous results in the following theorem.
\begin{teor}\label{teor:Zarsimeq}
Let $(D,\mathfrak{m})$ and $(D',\mathfrak{m}')$ be two countable Noetherian local domains. Then, $\Zar(D)^\cons\simeq\Zar(D')^\cons$ if and only if one of the following conditions hold:
\begin{enumerate}[(a)]
\item $\dim(D)=\dim(D')=1$ and $|\Max(\overline{D})|=|\Max(\overline{D'})|$;
\item $\dim(D)=\dim(D')=2$;
\item $\dim(D)\geq 3$ and $\dim(D')\geq 3$.
\end{enumerate}
\end{teor}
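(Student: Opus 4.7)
The plan is to reduce the ``if'' direction to the three preceding propositions and to prove the ``only if'' direction by exhibiting topological invariants of $\Zar(D)^\cons$ that separate the three dimension classes.

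First, I would dispatch the ``if'' direction immediately: conditions (a), (b), (c) are exactly the hypotheses of Propositions \ref{prop:omef-noeth-dim1}, \ref{prop:omef-noeth-dim2}, and \ref{prop:omef-noeth-dim3}, respectively, so no new argument is needed.

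For the converse, I would assume $\Zar(D)^\cons\simeq\Zar(D')^\cons$ and argue that $D,D'$ must lie in the same dimension class by means of two topological invariants preserved by any homeomorphism: finiteness and the existence of isolated points. The dim-$1$ case yields a finite space (of cardinality $|\Max(\overline{D})|+1$, by the proof of Proposition \ref{prop:omef-noeth-dim1}); the dim-$2$ case yields an infinite space with at least one isolated point (by Theorem \ref{teor:noeth} applied to any of the infinitely many height-one primes supplied by Lemma \ref{lemma:card-X1}, noting that $\V(P)=\{P,\mm\}$ is finite when $\dim(D)=2$); and the dim-$\geq 3$ case is perfect by Corollary \ref{cor:dim3-perfect}. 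These three behaviors are pairwise incompatible, so $D$ and $D'$ must fall into the same class.

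Finally, I would close case (a) by observing that a homeomorphism between the finite spaces $\Zar(D)$ and $\Zar(D')$ forces equal cardinalities, whence $|\Max(\overline{D})|=|\Max(\overline{D'})|$. The whole argument is essentially an assembly of earlier results; the only minor subtlety---verifying that the dim-$2$ case genuinely supplies an isolated point---is immediate from Theorem \ref{teor:noeth}, so no serious obstacle arises.
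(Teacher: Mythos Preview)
Your proposal is correct and follows essentially the same route as the paper: the ``if'' direction is dispatched via the three propositions, and the ``only if'' direction separates the dimension classes by the two invariants of finiteness and presence of isolated points (using Corollary \ref{cor:dim3-perfect} for $\dim\geq 3$, and Lemma \ref{lemma:card-X1} together with Theorem \ref{teor:noeth} for $\dim=2$), then reads off $|\Max(\overline{D})|$ from the cardinality in dimension~$1$. The only cosmetic difference is that you phrase the converse as a trichotomy of mutually exclusive topological behaviors, whereas the paper first isolates the $\dim=1$ case by finiteness and then distinguishes $\dim=2$ from $\dim\geq 3$ among the remaining (infinite) cases.
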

\begin{proof}
If $D$ and $D'$ satisfy one of the conditions, then $\Zar(D)^\cons\simeq\Zar(D')^\cons$ by, respectively, Proposition \ref{prop:omef-noeth-dim1}, Proposition \ref{prop:omef-noeth-dim2} and Proposition \ref{prop:omef-noeth-dim3}.

Suppose now that $\Zar(D)^\cons\simeq\Zar(D')^\cons$. 

If $\dim(D)=1$, then $\Zar(D)$ is finite, and thus so must be $\Zar(D')$; hence, $\dim(D')=1$, and $|\Max(\overline{D})|=|\Max(\overline{D'})|$ by Proposition \ref{prop:omef-noeth-dim1}.

Suppose $\dim(D),\dim(D')\geq 2$. By Corollary \ref{cor:dim3-perfect} and Lemma \ref{lemma:card-X1}, $\Zar(D)^\cons$ has isolated points if and only if $\dim(D)=2$; therefore $\dim(D)=2$ if and only if $\dim(D')=2$, and $\dim(D)\geq 3$ if and only if $\dim(D')\geq 3$. The claim is proved.
\end{proof}

\section{When $D$ is a field}\label{sect:Dfield}
In this section we analyze the isolated points Zariski space $\Zar(L|D)^\cons$ when $D=K$ is a field. Note that, if $L$ is algebraic over $K$, then $\Zar(L|K)$ is just a point ($L$ itself); thus, the only interesting case is when $\trdeg(L/K)\geq 1$.

We start by connecting the isolated points of $\Zar(L|D)^\cons$ and of $\Zar(L'|D)^\cons$, where $L'\subseteq L$ is an algebraic extension.
\begin{prop}\label{prop:algext}
Let $V$ be a valuation domain, and $L'\subseteq L$ be an algebraic extension such that $V\subseteq L'$. Let $\rho:\Zar(L|V)\longrightarrow\Zar(L'|V)$ be the restriction map, and let $\mathcal{X}\subseteq\Zar(L|V)$ be a subset such that $\rho^{-1}(\rho(\mathcal{X}))=\mathcal{X}$. Then, the following hold.
\begin{enumerate}[(a)]
\item\label{prop:algext:isolated} If $W$ is isolated in $\mathcal{X}^\cons$, then $\rho(W)$ is isolated in $\rho(\mathcal{X})^\cons$.
\item\label{prop:algext:perfect} If $\rho(\mathcal{X})$ is perfect and $|\rho(\mathcal{X})|>1$, then $\mathcal{X}$ is perfect.
\end{enumerate}
In particular, the previous statements apply to $\mathcal{X}=\Zar(L|V)$ and $\mathcal{X}=\est(L|V)$.
\end{prop}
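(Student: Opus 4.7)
The plan is to reduce to a Galois closure and exploit the induced symmetry. I would let $\widetilde{L}$ be a normal closure of $L$ over $L'$ inside an algebraic closure of $L$, and set $G := \mathrm{Aut}(\widetilde{L}/L')$ and $H := \mathrm{Aut}(\widetilde{L}/L)$. Each $\sigma \in G$ fixes $L'$ and hence $V$, so it induces a homeomorphism of $\Zar(\widetilde{L}|V)^\cons$. The restriction maps
\begin{equation*}
\pi_H\colon \Zar(\widetilde{L}|V) \to \Zar(L|V), \qquad \pi_G\colon \Zar(\widetilde{L}|V) \to \Zar(L'|V)
\end{equation*}
are continuous, surjective (by algebraicity) and closed (between compact Hausdorff spaces in the constructible topology), hence topological quotient maps; the classical Galois theory of valuations identifies their fibers with the $H$-orbits and $G$-orbits, so one gets identifications $\Zar(L|V)^\cons \simeq \Zar(\widetilde{L}|V)^\cons/H$ and $\Zar(L'|V)^\cons \simeq \Zar(\widetilde{L}|V)^\cons/G$, with $\rho$ realized as the canonical map between them.

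For part (a), I would fix a lift $\widetilde{W} \in \pi_H^{-1}(W)$ and choose an open $U \subseteq \Zar(L|V)^\cons$ with $U \cap \mathcal{X} = \{W\}$. Setting $\widetilde{U} := \pi_H^{-1}(U)$ and $\widetilde{\mathcal{X}} := \pi_H^{-1}(\mathcal{X})$, one has $\widetilde{U} \cap \widetilde{\mathcal{X}} = \pi_H^{-1}(\{W\}) = H \cdot \widetilde{W}$, and the saturation hypothesis $\rho^{-1}(\rho(\mathcal{X})) = \mathcal{X}$ translates (via a quick check using that every $\sigma \in G$ preserves $\rho$-values) into $G$-invariance of $\widetilde{\mathcal{X}}$. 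I would then form the $G$-saturation $\widetilde{U}^* := \bigcup_{\sigma \in G} \sigma(\widetilde{U})$: this is open (a union of homeomorphic images of $\widetilde{U}$) and $G$-invariant, so $\pi_G(\widetilde{U}^*)$ is open in $\Zar(L'|V)^\cons$ by the quotient property of $\pi_G$. Using the $G$-invariance of both $\widetilde{U}^*$ and $\widetilde{\mathcal{X}}$ one computes $\widetilde{U}^* \cap \widetilde{\mathcal{X}} = G \cdot \widetilde{W}$, and hence $\pi_G(\widetilde{U}^*) \cap \rho(\mathcal{X}) = \{\rho(W)\}$; this exhibits $\rho(W)$ as an isolated point of $\rho(\mathcal{X})^\cons$.

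Part (b) would then follow as the contrapositive of (a): any isolated point of $\mathcal{X}^\cons$ projects to an isolated point of $\rho(\mathcal{X})^\cons$, contradicting perfectness. The assumption $|\rho(\mathcal{X})|>1$ is only there to make the perfectness hypothesis nonvacuous, since a singleton space is never perfect.

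I expect the main obstacle to be verifying that $\pi_H$ and $\pi_G$ are genuine topological quotient maps with the expected fibers in the constructible topology; once this is established, the rest is a clean symmetry play with the $G$-action on $\Zar(\widetilde{L}|V)^\cons$, and notably the argument does not require $L/L'$ to be normal.
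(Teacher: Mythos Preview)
Your approach is correct and takes a genuinely different route from the paper's. The paper argues in three stages: first the case where $L/L'$ is finite and normal (using that the finite automorphism group acts transitively on $\rho^{-1}(\rho(W))$, so the whole fiber consists of isolated points and is open, and then pushing down via closedness of $\rho$); then $L/L'$ finite but not necessarily normal (passing to the \emph{finite} normal closure, where the fiber over $W$ is finite and open, hence discrete, and applying the first case); finally the general algebraic case, by noting that the isolation of $W$ is witnessed by finitely many elements $x_1,\ldots,x_n,y_1,\ldots,y_m\in L$, which allows one to descend to the finite subextension $F=L'(x_1,\ldots,y_m)$ of $L'$ and invoke the finite case. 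You instead pass once and for all to the possibly infinite normal closure $\widetilde{L}$ and run the orbit/quotient argument directly there. This is conceptually cleaner and avoids the three-step reduction, but it rests on the transitivity of $\mathrm{Aut}(\widetilde{L}/L')$ and $\mathrm{Aut}(\widetilde{L}/L)$ on valuation extensions for \emph{infinite} normal extensions---a standard fact (e.g.\ Bourbaki, \emph{Alg.\ Comm.}, Ch.~VI, \S8, no.~6, or Engler--Prestel, \emph{Valued Fields}, Thm.~3.2.15), but one the paper deliberately sidesteps by reducing to the finite case and citing only Gilmer, Cor.~20.2. Your flagged ``main obstacle'' is thus precisely the point where the two proofs diverge: once the quotient descriptions $\Zar(L|V)^\cons\simeq\Zar(\widetilde{L}|V)^\cons/H$ and $\Zar(L'|V)^\cons\simeq\Zar(\widetilde{L}|V)^\cons/G$ are in hand (and they are, since $\widetilde{L}$ is normal over both $L'$ and $L$, and continuous surjections between compact Hausdorff spaces are quotient maps), the $G$-saturation computation goes through exactly as you describe.
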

\begin{proof}
\ref{prop:algext:isolated} Let $W$ be an isolated point of $\mathcal{X}^\cons$, and let $W':=W\cap L'=\rho(W)$.

Suppose first that $L$ is finite and normal over $L'$. Let $G$ be the group of $L'$-automorphisms of $L$: then, every $\sigma\in G$ is  continuous when seen as a map from $\Zar(L|V)^\cons$ to itself. Moreover, $\rho(\sigma(Z))=\rho(Z)$ for every $Z\in\Zar(L|V)$, and thus $\sigma$ restricts to a self-homeomorphism of $\mathcal{X}$.

Since $G$ acts transitively on $\rho^{-1}(W')$ (see e.g. \cite[Corollary 20.2]{gilmer}) and $W\in\rho^{-1}(W')$ is isolated, all points of $\rho^{-1}(W')$ are isolated in $\mathcal{X}$; hence, $\rho^{-1}(W')$ is open in $\mathcal{X}^\cons$. Since $\rho:\Zar(L|V)\longrightarrow\Zar(L'|V)$ is a closed map (with respect to the constructible topology), it is also closed when seen as a map $\mathcal{X}\longrightarrow\rho(\mathcal{X})$; therefore, $\rho(\mathcal{X}\setminus\rho^{-1}(W'))=\rho(\mathcal{X})\setminus\{W'\}$ is closed in $\rho(\mathcal{X})$, with respect to the constructible topology, and thus $W'$ is an isolated point of $\rho(\mathcal{X})^\cons$, as claimed.

\smallskip

Suppose now that $L$ is finite over $L'$, and let $F$ be the normal closure of $L$. Let $\rho_0:\Zar(F|V)\longrightarrow\Zar(L|V)$ be the restriction map. Since $W$ is isolated in $\mathcal{X}$, the set $\rho_0^{-1}(W)$ is open in $\rho_0^{-1}(\mathcal{X})^\cons$; moreover, $\rho_0^{-1}(W)$ is finite since $[F:L]<\infty$. Therefore, $\rho_0^{-1}(W)$ is a discrete subspace of $\rho_0^{-1}(\mathcal{X})^\cons$, and in particular each $Z\in\rho_0^{-1}(W)$ is isolated. Applying the previous part of the proof to the extension $L'\subseteq F$ and to any such $Z$, we obtain that $Z\cap L'=W\cap L'=\rho(W)$ is isolated, as claimed.

\smallskip

Suppose now that $L'\subseteq L$ is arbitrary. Since $W$ is isolated in $\mathcal{X}$, there are $x_1,\ldots,x_n,y_1,\ldots,y_m\in L$ such that $\{W\}=\B(x_1,\ldots,x_n)\cap\B(y_1)^c\cap\cdots\cap\B(y_m)^c\cap\mathcal{X}$. Let $F:=L(x_1,\ldots,x_n,y_1,\ldots,y_m)$: then, $W\cap F$ is isolated in $\{Z\cap F\mid Z\in\mathcal{X}\}$. Since $[F:L']<\infty$, we can apply the previous part of the proof, obtaining that $W\cap F\cap L'=W\cap L'=\rho(W)$ is isolated in $\rho(\mathcal{X})^\cons$, as claimed.

\medskip

\ref{prop:algext:perfect} Suppose that $\mathcal{X}$ is not perfect: then, there is a $W\in\mathcal{X}$ that is isolated. By the previous part of the proof, it would follow that $W\cap L'$ is isolated in $\rho(X)^\cons$. Since $\rho(\mathcal{X})$ has more than one point, this is impossible, and so $\mathcal{X}$ is perfect.

\medskip

The ``in particular'' statement follows from the fact that $\Zar(L|V)$ and $\est(L|V)$ satisfy the hypothesis on $\mathcal{X}$.
\end{proof}

\begin{cor}\label{cor:algext-ZarExt}
Let $V$ be a valuation domain and $L'\subseteq L$ be an algebraic extension; suppose that $V\subseteq L'$ and that $L'$ is transcendental over the quotient field of $V$. If $\Zar(L'|V)^\cons$ (respectively, $\est(L'|V)^\cons$) is perfect, then $\Zar(L|V)^\cons$ (resp., $\est(L|V)^\cons$) is perfect.
\end{cor}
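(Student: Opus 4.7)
The plan is to reduce both statements directly to Proposition \ref{prop:algext}\ref{prop:algext:perfect}, applied with $\mathcal{X}:=\Zar(L|V)$ in the first case and $\mathcal{X}:=\est(L|V)$ in the second. The corollary will then follow once I verify the two hypotheses of that proposition, namely that $\rho^{-1}(\rho(\mathcal{X}))=\mathcal{X}$ and that $|\rho(\mathcal{X})|>1$, where $\rho\colon\Zar(L|V)\longrightarrow\Zar(L'|V)$ is the restriction map.

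For $\mathcal{X}=\Zar(L|V)$ the first condition is automatic, and the image of $\rho$ is exactly $\Zar(L'|V)$ since every valuation of $L'$ over $V$ extends to $L$. For $\mathcal{X}=\est(L|V)$ I would use the identity $W\cap K=(W\cap L')\cap K$, valid for any $W\in\Zar(L|V)$ because $K\subseteq L'$: this shows simultaneously that $\rho(\est(L|V))\subseteq\est(L'|V)$ and that $\rho^{-1}(\est(L'|V))=\est(L|V)$, while surjectivity of the restriction onto $\est(L'|V)$ follows since any extension to $L$ of an element of $\est(L'|V)$ automatically restricts to $V$ on $K$. Thus in both cases the set-theoretic hypothesis of Proposition \ref{prop:algext}\ref{prop:algext:perfect} is satisfied and $\rho(\mathcal{X})$ is, respectively, $\Zar(L'|V)$ or $\est(L'|V)$.

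For the cardinality condition, I will observe that $\Zar(L'|V)$ and $\est(L'|V)$ are always nonempty, and that a nonempty Hausdorff perfect space is necessarily infinite (otherwise some singleton would be open). Since $\rho(\mathcal{X})^\cons$ is Hausdorff and perfect by hypothesis, this yields $|\rho(\mathcal{X})|>1$, and Proposition \ref{prop:algext}\ref{prop:algext:perfect} then gives the perfectness of $\mathcal{X}^\cons$.

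I do not expect any real obstacle: the argument is essentially bookkeeping on top of Proposition \ref{prop:algext}\ref{prop:algext:perfect}. The transcendence hypothesis on $L'$ over the quotient field of $V$ plays no direct role in the reasoning; its purpose is only to make the assumption of perfectness non-vacuous, since otherwise $\Zar(L'|V)$ and $\est(L'|V)$ would typically be nonempty finite sets and hence fail to be perfect.
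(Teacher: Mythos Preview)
Your proof is correct and follows the same route as the paper: both reduce to Proposition \ref{prop:algext}\ref{prop:algext:perfect} applied with $\mathcal{X}=\Zar(L|V)$ or $\mathcal{X}=\est(L|V)$. The only difference lies in how the cardinality hypothesis $|\rho(\mathcal{X})|>1$ is checked: the paper invokes the transcendence of $L'$ over the quotient field of $V$ directly, whereas you deduce it from the fact that a nonempty Hausdorff perfect space is infinite; your remark that the transcendence hypothesis is therefore formally redundant (once perfectness is assumed) is correct and a small sharpening of the statement.
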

\begin{proof}
It is enough to apply Proposition \ref{prop:algext}\ref{prop:algext:perfect} to $\mathcal{X}=\Zar(L|V)$ or $\mathcal{X}=\est(L|V)$, using the hypothesis that $L'$ is transcendental over the quotient field of $V$ to guarantee that $|\Zar(L|V)|>1$ and $|\est(L|V)^\cons|>1$.
\end{proof}

The following result completely settles the problem of finding the isolated points when $\trdeg(L/K)\geq 2$, generalizing \cite[Theorem 4.45]{preordered-groups} and solving the authors' Conjecture A (in an even more general formulation). Note that the first case in the proof is exactly \cite[Theorem 4.45]{preordered-groups}, but we give a new proof of it using Theorem \ref{teor:noeth}.
\begin{teor}\label{teor:trdeg2-field}
Let $K\subseteq L$ be a field extension with $\trdeg(L/K)\geq 2$. Then, $\Zar(L|K)^\cons$ is perfect.
\end{teor}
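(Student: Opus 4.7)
The plan is to argue by contradiction: suppose $V\in\Zar(L|K)$ is isolated in $\Zar(L|K)^\cons$. The strategy is to force $L/K$ to be finitely generated, drop down to a purely transcendental subfield of transcendence degree $d\geq 2$, and then apply Theorem~\ref{teor:noeth} to a polynomial ring to derive a contradiction.

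First, apply Theorem~\ref{teor:x1xn} (valid since $K$ is integrally closed): there exist $x_1,\ldots,x_n\in L$ and a maximal ideal $M$ of $A:=K[x_1,\ldots,x_n]$ such that $V$ is the integral closure of $A_M$. Because $V$ has quotient field $L$ and contains each $x_i$, the quotient field of $A$ must equal $L$, so $L=K(x_1,\ldots,x_n)$ is finitely generated over $K$. Extract a transcendence basis $\{z_1,\ldots,z_d\}\subseteq\{x_1,\ldots,x_n\}$ of $L/K$; the hypothesis $\trdeg(L/K)\geq 2$ forces $d\geq 2$, and setting $L_0:=K(z_1,\ldots,z_d)$ the extension $L/L_0$ is finite and algebraic.

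Next, apply Proposition~\ref{prop:algext}\ref{prop:algext:isolated} to the restriction map $\rho\colon\Zar(L|K)\to\Zar(L_0|K)$ with $\mathcal{X}=\Zar(L|K)$ and with $K$ itself viewed as the trivial base valuation domain (the ``in particular'' clause of Proposition~\ref{prop:algext} covers this case, and $\rho$ is surjective because valuations always extend through algebraic extensions). The conclusion is that $W:=V\cap L_0$ is isolated in $\Zar(L_0|K)^\cons$. For each $i$, pick $w_i\in\{z_i,z_i^{-1}\}\cap W$ and set $B:=K[w_1,\ldots,w_d]$. Since the $w_i$ remain algebraically independent, $B$ is a polynomial ring over $K$ of Krull dimension $d$ with quotient field $L_0$; moreover $\Zar(B)=\B^{L_0}(w_1,\ldots,w_d)$ is open in $\Zar(L_0|K)^\cons$, so $W$ is isolated in $\Zar(B)^\cons$.

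The contradiction now comes from Theorem~\ref{teor:noeth}: the center $P$ of $W$ on $B$ would have to satisfy $h(P)\leq 1$ and $|\V(P)|<\infty$. If $h(P)=0$, then $\V(P)=\Spec(B)$ is infinite; if $h(P)=1$, then $B/P$ is an affine $K$-domain of Krull dimension $d-1\geq 1$, which has infinitely many maximal ideals by the Nullstellensatz, so $\V(P)$ is again infinite. Either case contradicts Theorem~\ref{teor:noeth}. The main subtlety is the clean setup of the reduction: one must observe that Theorem~\ref{teor:x1xn} really does force $L$ to be finitely generated over $K$ (so that a transcendence basis of size $d$ can be chosen \emph{inside} $\{x_1,\ldots,x_n\}$, rather than in the possibly infinite $L$), after which Proposition~\ref{prop:algext} and Theorem~\ref{teor:noeth} do the remaining work mechanically.
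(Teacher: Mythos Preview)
Your proof is correct and follows essentially the same strategy as the paper: reduce to a purely transcendental subfield $L_0=K(z_1,\ldots,z_d)$, pass to a polynomial ring $B=K[w_1,\ldots,w_d]$ via the $w_i\in\{z_i,z_i^{-1}\}$ trick, and invoke Theorem~\ref{teor:noeth} together with the Hilbert-ring property of $B$ to reach a contradiction. The argument in the polynomial ring is the same in both proofs.

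The one structural difference is in how you reach the finitely generated case. The paper does \emph{not} use Theorem~\ref{teor:x1xn} here: instead it proves perfection first for $L$ purely transcendental finitely generated, then for $\trdeg(L/K)<\infty$ via Corollary~\ref{cor:algext-ZarExt}, and finally handles arbitrary $L$ by taking the finitely many elements $x_i,y_j$ defining the isolated point, adjoining two algebraically independent elements $a,b$ to guarantee $\trdeg\geq 2$, and restricting to $L'=K(a,b,x_i,y_j)$. You instead observe that Theorem~\ref{teor:x1xn} forces $L$ itself to equal $K(x_1,\ldots,x_n)$ (since $V$ is the integral closure of $K[x_1,\ldots,x_n]_M$ in its own quotient field, which must then be $L$), collapsing the three-stage reduction into one step. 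This is a legitimate shortcut and arguably cleaner, though it leans on the full strength of Theorem~\ref{teor:x1xn}, whereas the paper's version needs only the elementary description of basic constructible open sets.
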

\begin{proof}
Suppose first that $L=K(x_1,\ldots,x_n)$ is a finitely generated purely transcendental extension of $K$, with transcendence basis $x_1,\ldots,x_n$. Suppose there exists an isolated point $W$ of $\Zar(L|K)^\cons$. By Proposition \ref{prop:isolated-dim0}, $W\neq L$.

For each $i$, at least one of $x_i$ and $x_i^{-1}$ belongs to $W$; let it be $t_i$. Then, $W\in\Zar(K[t_1,\ldots,t_n])$, and so $W$ is isolated in $\Zar(K[t_1,\ldots,t_n])^\cons$. Let $P$ be the center of $W$ on $K[t_1,\ldots,t_n]$; since $K[t_1,\ldots,t_n]$ is Noetherian, by Theorem \ref{teor:noeth} $P$ has height $1$ and $\V(P)$ is finite.

Since $K[t_1,\ldots,t_n]$ is isomorphic to a polynomial ring, every maximal ideal of $K[t_1,\ldots,t_n]$ has height $n>1$ \cite[Section 3.2, Exercise 3]{kaplansky}, and thus $P$ is not maximal. However, $K[t_1,\ldots,t_n]$ is an Hilbert ring, and thus every non-maximal prime ideal is the intersection of the maximal ideals containing it \cite[Theorem 147]{kaplansky}; in particular, this happens for $P$, and thus $\V(P)$ must be infinite. This is a contradiction, and so $\Zar(L|K)^\cons$ is perfect.

\medskip

Suppose now that $L$ has finite transcendence degree over $K$, let $x_1,\ldots,x_n$ be a transcendence basis of $L$ and let $L':=K(x_1,\ldots,x_n)$. By the previous part of the proof, $\Zar(L'|K)^\cons$ is perfect; since $L'\subseteq L$ is algebraic, by Corollary \ref{cor:algext-ZarExt} also $\Zar(L|K)^\cons$ is perfect.

\medskip

Take now any extension $L$ of $K$, and suppose that $W$ is an isolated point of $\Zar(L|K)^\cons$. Then, there are $x_1,\ldots,x_n,y_1,\ldots,y_m\in L$ such that $\{W\}=\B(x_1,\ldots,x_n)\cap\B(y_1)^c\cap\cdots\cap\B(y_m)^c$. Take two elements $a,b\in L$ that are algebraically independent over $K$, and let $L':=K(a,b,x_1,\ldots,x_n,y_1,\ldots,y_m)$: then, $2\leq\trdeg(L'/K)<\infty$. Set $V:=W\cap L'$: then, $\{V\}=\B^{L'}(x_1,\ldots,x_n)\cap\B^{L'}(y_1)^c\cap\cdots\cap\B^{L'}(y_m)^c$, and thus $V$ is isolated in $\Zar(L'|V)^\cons$. However, by the previous part of the proof, $\Zar(L'|V)^\cons$ is perfect, a contradiction. Hence $\Zar(L|K)^\cons$ is perfect.
\end{proof}

When the transcendence degree of $L$ over $K$ is $1$, the picture is very different, because it may even happen that all elements of $\Zar(L|K)^\cons$ (except $L$ itself) are isolated. Compare the next results with \cite[Corollary 5.5(a)]{ZarNoeth} and \cite[Proposition 4.2]{ZarNoeth2}.
\begin{prop}\label{prop:isolated-K(X)K}
Let $K$ be a field. Then all points of $\Zar(K(X)|K)$, except $K(X)$, are isolated with respect to the constructible topology.
\end{prop}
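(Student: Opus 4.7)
The plan is to reduce the problem to a direct application of Theorem \ref{teor:noeth} to the Noetherian rings $K[X]$ and $K[X^{-1}]$, using the standard classification of valuation rings of $K(X)$ trivial on $K$.

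First I would observe that any $V \in \Zar(K(X)|K)$ different from $K(X)$ must contain either $X$ or $X^{-1}$, since $V$ is a valuation domain. Equivalently, $V \in \Zar(K[X]) \cup \Zar(K[X^{-1}])$, i.e., $V \in \B(X) \cup \B(X^{-1})$. Because $\B(X)$ and $\B(X^{-1})$ are open in $\Zar(K(X)|K)^\cons$, to establish that a nontrivial $V$ is isolated in $\Zar(K(X)|K)^\cons$ it suffices to establish that it is isolated in whichever of $\Zar(K[X])^\cons$ or $\Zar(K[X^{-1}])^\cons$ contains it.

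Next, since $K[X]$ is a principal ideal domain, every nontrivial valuation overring of $K[X]$ is a localization $K[X]_{(p)}$ at a maximal ideal $(p)$ generated by some irreducible polynomial $p$. Such a $V$ has center $(p)$ on $K[X]$, whose height is $1$, and $\V((p)) = \{(p)\}$ is finite because $(p)$ is maximal. Theorem \ref{teor:noeth} then yields that $V$ is isolated in $\Zar(K[X])^\cons$, and hence in $\Zar(K(X)|K)^\cons$. The identical argument applied to the PID $K[X^{-1}]$ shows that $K[X^{-1}]_{(X^{-1})}$ is isolated as well, completing the case analysis.

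I do not foresee a real obstacle: once one observes that the ambient space is covered by the two open sets $\B(X)$ and $\B(X^{-1})$, each of which is the Zariski space of a one-dimensional Noetherian domain, the conclusion is immediate from Theorem \ref{teor:noeth}. The only thing worth being careful about is confirming the classification of valuations of $K(X)/K$, but this follows from the fact that $K[X]$ and $K[X^{-1}]$ are PIDs (so any valuation overring is a localization at a prime).
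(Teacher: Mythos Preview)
Your argument is correct. You cover $\Zar(K(X)|K)$ by the two constructibly open sets $\B(X)=\Zar(K[X])$ and $\B(X^{-1})=\Zar(K[X^{-1}])$, and then invoke Theorem \ref{teor:noeth} on each of these one-dimensional Noetherian domains; every nontrivial valuation overring has center a height-$1$ maximal ideal, so the hypothesis $|\V(P)|<\infty$ is automatic.

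The paper takes a more elementary and direct route: it simply writes down, for each $V\neq K(X)$, a basic constructible open set that is the singleton $\{V\}$. Namely, $\{K[X]_{(f)}\}=\B(f^{-1})^c$ for each irreducible $f\in K[X]$, and $\{K[X^{-1}]_{(X^{-1})}\}=\B(X)^c$. This avoids any appeal to Theorem \ref{teor:noeth} (which in turn rests on Theorem \ref{teor:x1xn}, Proposition \ref{prop:essential}, and Krull--Akizuki type facts), so the paper's proof is self-contained at the level of the definition of the constructible topology. Your approach, by contrast, illustrates that Proposition \ref{prop:isolated-K(X)K} is a special case of the general Noetherian picture, which is conceptually pleasing but logically heavier. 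Either is a valid proof; the paper's is preferable if one wants the proposition to stand independently of Section \ref{sect:noeth}.
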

\begin{proof}
The points of $\Zar(K(X)|K)$ are $K(X)$, $K[X]_{(X^{-1})}$ and the rings $K[X]_{(f(X))}$, where $f(X)$ is an irreducible polynomial of $K[X]$. The first one is not isolated by Proposition \ref{prop:isolated-dim0}; on the other hand, $\{K[X]_{(f(X))}\}=\B(f(X)^{-1})^c$ and $\{K[X^{-1}]_{(X^{-1})}\}=\B(X)^c$, and thus these domains are isolated, as claimed.
\end{proof}

\begin{lemma}\label{lemma:isolato-extfin}
Let $D$ be an integral domain with quotient field $K$, and let $L'\subseteq L$ be two extensions of $K$. Let $V\in\Zar(L'|D)$. If $V$ is isolated in $\Zar(L'|D)^\cons$ and $\est(L|V)$ is finite, then every $W\in\est(L|V)$ is isolated in $\Zar(L|D)^\cons$.
\end{lemma}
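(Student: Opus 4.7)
The plan is to use the restriction map $\rho\colon\Zar(L|D)\longrightarrow\Zar(L'|D)$, which sends a valuation domain to its intersection with $L'$. As recalled in the preliminaries, $\rho$ is a spectral map, hence continuous with respect to the constructible topologies on source and target. Since $V$ is isolated in $\Zar(L'|D)^\cons$, the singleton $\{V\}$ is open, and therefore
\begin{equation*}
\est(L|V)=\rho^{-1}(V)
\end{equation*}
is an open subset of $\Zar(L|D)^\cons$.

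Next I would exploit the finiteness assumption together with the fact that $\Zar(L|D)^\cons$ is Hausdorff: any finite subset of a Hausdorff space, viewed with the subspace topology, is discrete, so every $W\in\est(L|V)$ is isolated in $\est(L|V)^\cons$. Since $\est(L|V)$ is open in $\Zar(L|D)^\cons$, a point isolated in this open subspace is also isolated in the whole space (as noted in the preliminaries on isolated points). Hence $\{W\}$ is open in $\Zar(L|D)^\cons$, proving that $W$ is isolated.

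There is no real obstacle here: the argument is a direct assembly of three facts recalled earlier in the paper — continuity of $\rho$ in the constructible topology, the fact that $\est(L|V)=\rho^{-1}(V)$, and the general observation that for an open subspace $\Omega$ a point of $\Omega$ is isolated in $\Omega$ if and only if it is isolated in the ambient space. The finiteness of $\est(L|V)$ is used only to guarantee that the Hausdorff subspace topology on $\est(L|V)$ is discrete.
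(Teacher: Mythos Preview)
Your proof is correct and follows essentially the same route as the paper's: both use continuity of the restriction map $\rho$ in the constructible topology to deduce that $\est(L|V)=\rho^{-1}(V)$ is open, and then invoke finiteness together with Hausdorffness to conclude that each point of this open set is isolated. Your version is slightly more explicit in separating the ``isolated in the subspace'' step from the ``open subspace preserves isolated points'' step, but the argument is the same.
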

\begin{proof}
Let $\rho:\Zar(L|D)\longrightarrow\Zar(L'|D)$ be the restriction map. Then, $\est(L|V)=\rho^{-1}(V)$ is open in $\Zar(L'|D)^\cons$ since $V$ is isolated. Moreover, $\est(L|V)$ is finite by hypothesis, and, since the constructible topology is Hausdorff, all its points are isolated in $\Zar(L|D)^\cons$.
\end{proof}

\begin{prop}\label{prop:LK-trdeg1-isolated}
Let $K$ be a field, and let $L$ be an extension of $K$ such that $\trdeg(L/K)=1$. Let $V\in\Zar(L|K)$, $V\neq L$. Then, the following are equivalent:
\begin{enumerate}[(i)]
\item\label{prop:LK-trdeg1-isolated:isol} $V$ is isolated in $\Zar(L|K)^\cons$;
\item\label{prop:LK-trdeg1-isolated:unique} there is a finitely generated extension $L'$ of $K$ such that $L'\subseteq L$ and $\est(L|V\cap L')=\{V\}$;
\item\label{prop:LK-trdeg1-isolated:finite} there is a finitely generated extension $L'$ of $K$ such that $L'\subseteq L$ and $\est(L|V\cap L')$ is finite.
\end{enumerate}
\end{prop}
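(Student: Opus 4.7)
The implication (ii)$\Rightarrow$(iii) is immediate. For (i)$\Rightarrow$(ii), suppose $V$ is isolated, so $\{V\}=\B(x_1,\ldots,x_n)\cap\B(y_1)^c\cap\cdots\cap\B(y_m)^c$ for some $x_i,y_j\in L$. I would take $L':=K(x_1,\ldots,x_n,y_1,\ldots,y_m)$ and verify that any $V'\in\est(L|V\cap L')$ satisfies $V'\cap L'=V\cap L'$, which contains all the $x_i$ and avoids all the $y_j$; hence $V'\in\B(x_1,\ldots,x_n)\cap\B(y_1)^c\cap\cdots\cap\B(y_m)^c=\{V\}$, forcing $\est(L|V\cap L')=\{V\}$.

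The substance of the proof is (iii)$\Rightarrow$(i), and the plan is to reduce to Proposition \ref{prop:isolated-K(X)K} via two applications of Lemma \ref{lemma:isolato-extfin}. Set $W:=V\cap L'$. First I would note that if $L'\subseteq L''\subseteq L$ with $L''$ finitely generated over $K$, then $\est(L|V\cap L'')\subseteq\est(L|V\cap L')$ remains finite, so (iii) survives enlargement of $L'$; by adjoining a transcendental element of $L$ if necessary, one may assume $\trdeg(L'/K)=1$. Then $L'$ is a finite algebraic extension of $K(X)$ for some $X\in L'$ transcendental over $K$. Set $W_0:=W\cap K(X)$. The key claim is $W_0\neq K(X)$: otherwise $K(X)\subseteq W$, but since $L'/K(X)$ is algebraic the only valuation ring of $L'$ containing $K(X)$ is $L'$ itself, so $W=L'$ and $V\supseteq L'$; then, as $L/L'$ is algebraic (both have transcendence degree $1$ over $K$), $V\in\Zar(L|L')=\{L\}$, contradicting $V\neq L$.

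By Proposition \ref{prop:isolated-K(X)K}, $W_0$ is isolated in $\Zar(K(X)|K)^\cons$, and $\est(L'|W_0)$ is finite because $L'/K(X)$ is finite. A first application of Lemma \ref{lemma:isolato-extfin} to the tower $K\subseteq K(X)\subseteq L'$ yields that $W$ is isolated in $\Zar(L'|K)^\cons$; a second application to $K\subseteq L'\subseteq L$, using the finiteness of $\est(L|W)$ from (iii), then shows $V$ is isolated in $\Zar(L|K)^\cons$. The delicate point — and the only place the assumption $V\neq L$ is used — is the exclusion of the degenerate case $W_0=K(X)$, which must be handled before invoking Proposition \ref{prop:isolated-K(X)K}; everything else is a mechanical chaining of the two lemmas.
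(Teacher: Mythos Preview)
Your proof is correct and follows essentially the same route as the paper: for (i)$\Rightarrow$(ii) both arguments take $L'=K(x_1,\ldots,x_n,y_1,\ldots,y_m)$, and for (iii)$\Rightarrow$(i) both reduce to Proposition~\ref{prop:isolated-K(X)K} via a transcendental $X\in L'$ and then lift isolation through Lemma~\ref{lemma:isolato-extfin}. Two minor differences: the paper observes directly that finiteness of $\est(L|V\cap L')$ forces $L'\subseteq L$ to be algebraic (hence $L'$ is already transcendental over $K$), whereas you enlarge $L'$ to arrange this; and your explicit two-step chaining of Lemma~\ref{lemma:isolato-extfin} through $K(X)\subseteq L'\subseteq L$ is cleaner than the paper's somewhat compressed final sentence, which as written contains a typo (``$\est(K(X)|V\cap K(X))$'') but intends exactly the argument you give.
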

\begin{proof}
\ref{prop:LK-trdeg1-isolated:isol} $\Longrightarrow$ \ref{prop:LK-trdeg1-isolated:unique} Since $V$ is isolated, we have $\{V\}=\Omega:=\B(x_1,\ldots,x_n)\cap\B(y_1)^c\cap\cdots\cap\B(y_m)^c$, for some $x_1,\ldots,x_n,y_1,\ldots,y_m\in L$. Let $L':=K(x_1,\ldots,x_n,y_1,\ldots,y_m)$: then, every extension of $V\cap L'$ to $L$ belongs to $\Omega$, and thus it is equal to $V$. Hence $L'$ is the required field.

\ref{prop:LK-trdeg1-isolated:unique} $\Longrightarrow$ \ref{prop:LK-trdeg1-isolated:finite} is obvious.

\ref{prop:LK-trdeg1-isolated:finite} $\Longrightarrow$ \ref{prop:LK-trdeg1-isolated:isol} Since $\est(L|V\cap L')$ is finite, $L'\subseteq L$ must be algebraic and so $K\subseteq L'$ is transcendental; take any $X\in L'$ that is transcendental over $K$. Since $K\subseteq L'$ is finitely generated, $K(X)\subseteq L'$ must be a finite extension.

Since $V\neq L$, we have $V\cap K(X)\neq K(X)$; by Proposition \ref{prop:isolated-K(X)K}, $V\cap K(X)$ is isolated in $\Zar(K(X)|K)$. Moreover, since $K(X)\subseteq L'$ is a finite extension, $\est(K(X)|V\cap K(X))$ is finite; by Lemma \ref{lemma:isolato-extfin}, all its points (in particular, $V$) are isolated in $\Zar(L|K)^\cons$.
\end{proof}

\begin{prop}\label{prop:LK-trdeg1}
Let $K$ be a field, and let $L$ be an extension of $K$ such that $\trdeg(L/K)=1$. Let $\mathcal{X}:=\Zar(L|K)\setminus\{L\}$. Then, the following are equivalent:
\begin{enumerate}[(i)]
\item\label{prop:LK-trdeg1:isolated} all points of $\mathcal{X}$ are isolated in $\Zar(L|K)^\cons$;
\item\label{prop:LK-trdeg1:forall} for every $X\in L$, transcendental over $K$, the set $\est(L|V)$ is finite for every $V\in\Zar(K(X)|K)$;
\item\label{prop:LK-trdeg1:exists} there is an $X\in L$, transcendental over $K$, such that the set $\est(L|V)$ is finite. for every $V\in\Zar(K(X)|K)$.
\end{enumerate}
\end{prop}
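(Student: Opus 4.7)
The plan is to go cyclically, (i) $\Rightarrow$ (ii) $\Rightarrow$ (iii) $\Rightarrow$ (i). The implication (ii) $\Rightarrow$ (iii) is immediate, since $\trdeg(L/K)=1$ guarantees the existence of an $X\in L$ transcendental over $K$, and the same $X$ witnesses (iii).

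For (i) $\Rightarrow$ (ii), fix $X\in L$ transcendental over $K$ and any $V\in\Zar(K(X)|K)$. I would split into two cases. If $V=K(X)$, then since $L/K(X)$ is algebraic, any valuation ring $W$ of $L$ containing $K(X)$ is automatically integrally closed and contains all algebraic elements over $K(X)$, whence $W=L$; so $\est(L|V)=\{L\}$ is finite. If instead $V\neq K(X)$, then for each $W\in\est(L|V)$ the identity $W\cap K(X)=V\neq K(X)$ forces $W\neq L$, so $\est(L|V)\subseteq\mathcal{X}$. Under hypothesis (i), every point of $\mathcal{X}$ is isolated in $\Zar(L|K)^\cons$, hence $\est(L|V)$ inherits the discrete topology as a subspace. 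On the other hand, $\est(L|V)=\rho^{-1}(V)$, with $\rho:\Zar(L|K)\longrightarrow\Zar(K(X)|K)$ the restriction, is closed in $\Zar(L|K)^\cons$ and therefore compact in the constructible topology. A compact discrete space is finite, giving (ii).

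For (iii) $\Rightarrow$ (i), let $X$ be as in (iii) and take any $W\in\mathcal{X}$. Set $V:=W\cap K(X)$; the condition $W\neq L$ forces $V\neq K(X)$, for the same algebraicity reason as above. By Proposition \ref{prop:isolated-K(X)K}, $V$ is isolated in $\Zar(K(X)|K)^\cons$, and by hypothesis $\est(L|V)$ is finite. Lemma \ref{lemma:isolato-extfin}, applied with $D=K$ and $L'=K(X)$, then gives that every element of $\est(L|V)$ — in particular $W$ — is isolated in $\Zar(L|K)^\cons$, which is what we need.

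I do not foresee a substantial obstacle: the argument is essentially a repackaging of Proposition \ref{prop:isolated-K(X)K}, Lemma \ref{lemma:isolato-extfin}, and the constructible-compactness of fibers of the restriction map. The one subtle point worth flagging is the degenerate case $V=K(X)$, which must be handled explicitly but is dispatched cleanly by observing that $L/K(X)$ is algebraic whenever $X\in L$ is transcendental over $K$.
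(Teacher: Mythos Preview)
Your proof is correct and follows essentially the same approach as the paper. The only minor differences are cosmetic: you explicitly handle the degenerate case $V=K(X)$ in (i)$\Rightarrow$(ii) (the paper glosses over it, since $\est(L|V)=\{L\}$ is trivially finite there), and for (iii)$\Rightarrow$(i) you invoke Proposition~\ref{prop:isolated-K(X)K} and Lemma~\ref{lemma:isolato-extfin} directly rather than routing through Proposition~\ref{prop:LK-trdeg1-isolated}, which packages the same argument.
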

\begin{proof}
\ref{prop:LK-trdeg1:isolated} $\Longrightarrow$ \ref{prop:LK-trdeg1:forall} Take any $X\in L$ that is transcendental over $K$, and let $V\in\Zar(K(X)|K)$. The space $\est(L|V)$ is closed in $\Zar(L|V)^\cons$, and thus it is compact. Since all its points are isolated, it is also discrete; hence, $\est(L|V)$ is finite.

\ref{prop:LK-trdeg1:forall} $\Longrightarrow$ \ref{prop:LK-trdeg1:exists} is obvious.

\ref{prop:LK-trdeg1:exists} $\Longrightarrow$ \ref{prop:LK-trdeg1:isolated} Apply Proposition \ref{prop:LK-trdeg1-isolated}, \ref{prop:LK-trdeg1-isolated:finite} $\Longrightarrow$ \ref{prop:LK-trdeg1-isolated:isol} with $L'=K(X)$ to each $V\in\mathcal{X}$.
\end{proof}

\begin{cor}
Let $K$ be a field and let $L$ be a finitely generated extension of $K$ such that $\trdeg(L/K)=1$. Then, all points of $\Zar(L|K)\setminus\{L\}$ are isolated in $\Zar(L|K)^\cons$.
\end{cor}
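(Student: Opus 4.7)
The plan is to apply Proposition \ref{prop:LK-trdeg1}, specifically the implication \ref{prop:LK-trdeg1:exists} $\Longrightarrow$ \ref{prop:LK-trdeg1:isolated}. So I need to exhibit some $X\in L$, transcendental over $K$, such that every $V\in\Zar(K(X)|K)$ admits only finitely many extensions to $L$.

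Since $L$ is finitely generated over $K$ and $\trdeg(L/K)=1$, I pick any transcendental element $X\in L$ over $K$. Then $K(X)\subseteq L$ is algebraic; combined with $L$ being finitely generated over $K$ (and hence over $K(X)$), this makes $K(X)\subseteq L$ a finite algebraic extension.

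The key classical fact to invoke is that, over a finite field extension $F\subseteq E$, any valuation domain of $F$ has only finitely many extensions to $E$ (bounded by $[E:F]$); see for instance \cite[Theorem 20.6]{gilmer}. Applied to $K(X)\subseteq L$, this gives that $\est(L|V)$ is finite for every $V\in\Zar(K(X)|K)$, which is condition \ref{prop:LK-trdeg1:exists} of Proposition \ref{prop:LK-trdeg1}. The conclusion then follows immediately.

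The argument is essentially a one-line deduction from previous material, so there is no real obstacle; the only point requiring any care is to verify the finiteness statement for the number of extensions, which is a standard theorem about extensions of valuations to finite-dimensional field extensions.
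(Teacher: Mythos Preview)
Your proof is correct and is exactly the argument the paper has in mind: the paper's proof consists of the single sentence ``It is enough to apply Proposition \ref{prop:LK-trdeg1}'', and you have simply unpacked why condition \ref{prop:LK-trdeg1:exists} holds, namely that $K(X)\subseteq L$ is finite and hence every valuation of $K(X)$ has only finitely many extensions to $L$.
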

\begin{proof}
It is enough to apply Proposition \ref{prop:LK-trdeg1}.
\end{proof}

\begin{oss}
Let $K\subseteq L$ be a transcendental extension of degree $1$, and let $V\in\Zar(L|K)$. Let $X\in L$ be transcendental over $K$. By Proposition \ref{prop:LK-trdeg1-isolated}, if $\est(K(X)|V\cap K(X))$ is finite, then $V$ is isolated in $\Zar(L|K)^\cons$; however, unlike in Proposition \ref{prop:LK-trdeg1}, the converse does not hold, i.e., $\est(K(X)|V\cap K(X))$ may be infinite even if $V$ is isolated.

For example, let $W=K[X]_{(X)}$ (more generally, we can take any $W\in\Zar(K(X)|K)$, $W\neq K(X)$). Since $W$ is a discrete valuation ring, using \cite{krull-estval} (see also \cite[Section 3]{gilmer-Pruf-Int}), it is possible to construct a chain $K(X)\subset F_0\subset F_1\subset\cdots$ of extensions of $K(X)$ such that:
\begin{itemize}
\item the extensions $K(X)\subset F_0$ and $F_i\subset F_{i+1}$ are finite, for each $i>0$;
\item $W$ has two extensions to $F_0$, say $W_1$ and $W_2$;
\item $W_1$ has only one extension to $F_i$, for each $i>0$;
\item if $W'$ is an extension of $W_2$ to $F_i$, then $W'$ has more than one extension to $F_{i+1}$.
\end{itemize}
Let $L:=\bigcup_{i\geq 0}F_i$. Then, $W_1$ has a unique extension $V$ to $L$, while $W_2$ has infinitely many extensions; in particular, the set $\mathcal{X}$ of extensions of $W$ to $L$ is infinite. Let $y\in W_1\setminus W_2$: then, $\B(y)\cap\mathcal{X}=\{V\}$, and thus $\B(y)\cap\B(X^{-1})^c=\{V\}$. Hence, $V$ is isolated in $\Zar(L|K)^\cons$, despite $V\cap K(X)=W$ having infinitely many extensions to $L$.

The reason why the proof of Proposition \ref{prop:LK-trdeg1} fails in this context is that we are not requiring the \emph{other} extensions of $W$ to $L$ to be isolated.
\end{oss}

\section{Extensions of valuations}\label{sect:extval}
In this section, we extend the results of the previous section from the case where $D=K$ is a field to the case where $D=V$ is a valuation domain. In particular, we want to study the set $\est(L|V)$ of extensions of $V$ to $L$.

The most important case is when $L=K(X)$ is the field of rational functions. If $V$ is a valuation domain with quotient field $K$ and $s\in K$, we set
\begin{equation*}
V_s:=\{\phi\in K(X)\mid \phi(s)\in V\}.
\end{equation*}
Then, $V_s$ is an extension of $V$ to $K(X)$, and it is possible to analyze quite thoroughly its algebraic properties (see for example \cite[Proposition 2.2]{PerTransc} for a description when $V$ has dimension $1$).

The following lemma is a partial generalization of \cite[Theorem 3.2]{PerTransc}, of which we follow the proof.
\begin{lemma}\label{lemma:Us-coniugati}
Let $V$ be a valuation domain with quotient field $K$, and let $U$ be an extension of $V$ to the algebraic closure $\overline{K}$. Let $s,t\in\overline{K}$. Then, $U_s\cap K(X)=U_t\cap K(X)$ if and only if $s$ and $t$ are conjugated over $K$.
\end{lemma}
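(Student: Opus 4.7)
The plan is to exploit the identity $\sigma(\phi(s)) = \phi(\sigma(s))$, valid for every $K$-automorphism $\sigma$ of $\overline{K}$ and every $\phi \in K(X)$, together with the transitive action of $\mathrm{Gal}(\overline{K}/K)$ on the set of extensions of $V$ to $\overline{K}$.

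For the sufficiency direction, if $s$ and $t$ are conjugate over $K$, I would pick a $K$-automorphism $\sigma$ of $\overline{K}$ with $\sigma(s) = t$, and modify it using the transitive Galois action on extensions of $V$ to $\overline{K}$ so that $\sigma(U) = U$. Then for every $\phi \in K(X)$ one has $\phi(s) \in U$ if and only if $\sigma(\phi(s)) = \phi(t) \in \sigma(U) = U$, giving $U_s \cap K(X) = U_t \cap K(X)$ at once.

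For the necessity direction, set $W := U_s \cap K(X) = U_t \cap K(X)$. Writing $m_s \in K[X]$ for the minimal polynomial of $s$, we have $m_s(s) = 0 \in \mathfrak{m}_U$, so $m_s \in \mathfrak{m}_W$ and hence $m_s(t) \in \mathfrak{m}_U$. To upgrade this to $m_s(t) = 0$, first I would observe that for every $a \in K$ the function $1/(X - a) \in K(X)$ detects whether $v_U(s - a) > 0$; since $W$ is the same for $s$ and $t$, we obtain the equivalence $v_U(s - a) > 0 \iff v_U(t - a) > 0$. Picking $a \in K$ with $v_U(t - a) > 0$ (obtained by reducing to the residue of $s$ over $V$), for every $k \geq 1$ the rational function $(X - a)^k/m_s(X) \in K(X)$ has $s$ as a pole, hence lies outside $W$, yielding
\[ k\,v_U(t - a) < v_U(m_s(t)) \quad \text{for every } k \geq 1. \]
As $k$ varies, this forces $v_U(m_s(t))$ to dominate every positive multiple of $v_U(t - a)$ inside the convex subgroup of $\Gamma_U$ generated by $v_U(t - a)$; since the value group is totally ordered abelian, the only possibility is $v_U(m_s(t)) = +\infty$, i.e., $m_s(t) = 0$, so $t$ is a root of the minimal polynomial of $s$ and the two elements are conjugate.

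The main obstacle is the archimedean-type step closing the necessity argument: in the rank-one situation of \cite[Theorem 3.2]{PerTransc} it is essentially immediate, but in general it demands working inside the convex subgroup of $\Gamma_U$ generated by $v_U(t-a)$. A secondary point to handle is the degenerate case $s \in K$, where $m_s = X - s$ and the equality $V_s = V_t$ collapses to $s = t$ by a direct test with the rational function $(X - s)/(X - t)$.
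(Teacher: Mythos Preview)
Your sufficiency argument has a genuine gap at the step ``modify $\sigma$ using the transitive Galois action so that $\sigma(U)=U$''. Transitivity lets you post-compose with some $\tau$ so that $\tau\sigma(U)=U$, but then $\tau\sigma(s)=\tau(t)$ need not equal $t$; equivalently, the decomposition group $\{\sigma:\sigma(U)=U\}$ need not act transitively on the $K$-conjugates of $s$. In fact this direction is false as stated: take $V=\mathbb{Z}_{(5)}$, $K=\mathbb{Q}$, $s=i$, $t=-i$, and let $U$ be any extension of $V$ to $\overline{\mathbb{Q}}$ lying over the prime $(2+i)$ of $\mathbb{Z}[i]$. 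Then $\phi=(X+2)/(X-2)\in K(X)$ has $u(\phi(i))=u(2+i)-u(2-i)=1$ and $u(\phi(-i))=-1$, so $\phi\in U_s\cap K(X)$ but $\phi\notin U_t\cap K(X)$. (The paper's own argument for this direction shares the same defect; fortunately only the converse implication is ever used, in the proof that $\est(K(X)|V)^\cons$ is perfect.)

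For the necessity direction your approach has two further gaps, while the paper's route is much simpler. First, an $a\in K$ with $u(t-a)>0$ need not exist: if $t\in U$ but its residue in $U/\mathfrak{m}_U$ does not lie in the residue field of $V$, then $u(t-a)\le 0$ for every $a\in K$. Second, even when such an $a$ exists, the inequalities $k\,u(t-a)<u(m_s(t))$ for all $k\ge 1$ do not force $m_s(t)=0$ unless the value group is archimedean; in higher rank $u(m_s(t))$ can simply lie above the convex subgroup generated by $u(t-a)$, and ``working inside'' that subgroup yields no information about $u(m_s(t))$ itself. The paper avoids all of this in one line: since $\overline{K}/K$ is algebraic, $\Gamma_U/\Gamma_V$ is torsion, so $\Gamma_V$ is cofinal in $\Gamma_U$ (for $V\neq K$, the only case needed). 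One then picks $c\in K$ with $v(c)>u(m_s(t))$, and $q:=m_s/c\in K(X)$ satisfies $q(s)=0\in U$ while $u(q(t))<0$, separating $U_s\cap K(X)$ from $U_t\cap K(X)$ directly.
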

\begin{proof}
If $s,t$ are conjugated, there is a $K$-automorphism $\sigma$ of $\overline{K}$ sending $s$ to $t$. Setting $\widetilde{\sigma}(\sum_ia_iX^i):=\sum_i\sigma(a_i)X^i$, we can extend $\sigma$ to a $K(X)$-automorphism $\widetilde{\sigma}$ of $\overline{K}(X)$ such that $\widetilde{\sigma}(\phi)(t)=\sigma(\phi(s))$ for every $\phi\in\overline{K}(X)$; in particular, if $\phi\in K(X)$ then $\widetilde{\sigma}(\phi)=\phi$ and thus $\phi(s)\in V$ if and only if $\phi(t)\in V$, i.e., $\phi\in U_s\cap K(X)$ if and only if $\phi\in U_t\cap K(X)$. Therefore, $U_s\cap K(X)=U_t\cap K(X)$.

Conversely, suppose that $s$ and $t$ are not conjugate, and let $p(X)$ be the minimal polynomial of $s$ over $K$: then, $p(t)\neq 0$, and thus there is a $c\in K$ such that $v(c)>u(p(t))$ (where $v$ and $u$ are, respectively, the valuations with respect to $V$ and $U$ and $u|_K=v$). Then, $q(X):=\frac{p(X)}{c}\in K(X)$ belongs to $U_s$ (since $q(s)=0\in V$) but not to $U_t$ (since $u(q(t))=u(p(t))-v(c)<0$). Hence, $U_s\cap K(X)\neq U_t\cap K(X)$, as claimed.
\end{proof}

\begin{teor}\label{teor:ext-K(X)}
Let $V$ be a valuation domain that is not a field. Then, $\est(K(X)|V)^\cons$ is perfect.
\end{teor}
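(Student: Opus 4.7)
The plan is to argue by contradiction. Suppose $W\in\est(K(X)|V)$ is isolated; then some basic open neighbourhood
\[
\Omega=\B(x_1,\ldots,x_n)\cap\B(y_1)^c\cap\cdots\cap\B(y_m)^c,
\]
with $x_1,\ldots,x_n,y_1,\ldots,y_m\in K(X)$, satisfies $\Omega\cap\est(K(X)|V)=\{W\}$. The goal will be to exhibit infinitely many pairwise distinct members of $\est(K(X)|V)$ lying in $\Omega$, contradicting the singleton. The main tool is the family $\{U_s\cap K(X):s\in\overline{K}\}$ from Lemma \ref{lemma:Us-coniugati}, where $U$ is any fixed extension of $V$ to $\overline{K}$: by that lemma the fibres of $s\mapsto U_s\cap K(X)$ are the (finite) $K$-conjugacy classes of elements of $\overline{K}$, so producing an infinite set $\Sigma\subseteq\overline{K}$ with $U_s\cap K(X)\in\Omega$ for every $s\in\Sigma$ automatically yields infinitely many distinct elements of $\Omega\cap\est(K(X)|V)$.

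To construct $\Sigma$, I first lift $W$ to some $W'\in\est(\overline{K}(X)|U)$; this is possible because the absolute Galois group $G=\mathrm{Aut}(\overline{K}/K)$ acts transitively on $\est(\overline{K}|V)$ and extends (by $\sigma(X)=X$) to act on the extensions of $W$ to $\overline{K}(X)$, so any such extension may be moved so that its trace on $\overline{K}$ is exactly $U$. Since $\Omega$ is defined by elements of $K(X)\subseteq\overline{K}(X)$, this $W'$ satisfies $w'(x_i)\geq 0$ and $w'(y_j)<0$, where $u,w'$ denote the valuations corresponding to $U,W'$. I then invoke the classical approximation description of extensions of $U$ to $\overline{K}(X)$: there is a point $s^\ast\in\overline{K}$ (or, in the residually immediate case, a pseudo-convergent sequence replacing it) and a neighbourhood $N$ of $s^\ast$ in the $u$-topology of $\overline{K}$ such that for every $\phi\in\{x_1,\ldots,y_m\}$ and every $t\in N$ avoiding the (finitely many) poles and roots of the $\phi$'s, the sign of $u(\phi(t))$ coincides with the sign of $w'(\phi)$. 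Hence $U_t\cap K(X)\in\Omega$ for all such $t$. Because $V$ is not a field, $\Gamma_U$ is nontrivial, $\overline{K}$ is infinite, and $N$ is genuinely infinite; since each $K$-conjugacy class in $\overline{K}$ is finite, $N$ must contain infinitely many pairwise non-conjugate elements, producing the required $\Sigma$.

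The principal obstacle is the approximation statement: different types of extensions $W'$ (residually transcendental/Gauss-type, point evaluation $W'=U_{s^\ast}$, or genuinely immediate pseudo-convergent limit) require different choices of $s^\ast$ and $N$, and in the immediate case one must extract a single member of a shrinking family of disks which is simultaneously fine enough to separate the finitely many roots of the $x_i,y_j$ from each other and coarse enough to contain infinitely many points. A secondary technicality is passing from polynomials to rational functions inside $\Omega$: one must ensure that the chosen $t$ lies outside the finite set of poles of the $x_i$ and $y_j$, which only removes finitely many candidates from $N$ and so leaves $\Sigma$ infinite. Once these analytical points are set up, the contradiction is immediate from Lemma \ref{lemma:Us-coniugati}.
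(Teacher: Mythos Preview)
Your plan is correct and rests on the same two ingredients as the paper's proof: the structure of extensions of a valuation on an algebraically closed field to the rational function field (pseudo-convergent sequences), together with Lemma~\ref{lemma:Us-coniugati} to control restrictions to $K(X)$. The paper organizes the argument slightly differently---it first proves the algebraically closed case outright and then deduces the general case by showing $\rho^{-1}(W)$ would have to be simultaneously open and finite---whereas you work directly with $U_s\cap K(X)$ inside $\Omega$; but the content is the same.

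One remark that would simplify your write-up: the case distinction you flag as the ``principal obstacle'' (residually transcendental / point evaluation / immediate) is not needed. The paper invokes a single result (\cite[Theorem~7.2]{fundstatz}) which asserts that \emph{every} extension $W'$ of $U$ to $\overline{K}(X)$ is of the form $U_E$ for a pseudo-convergent sequence $E=\{s_\nu\}$ with $W'\neq U_{s_\nu}$, and that for each $\phi$ the values $\phi(s_\nu)$ are eventually in or eventually out of $U$ according to whether $\phi\in W'$. This immediately gives infinitely many distinct $s_\nu\in\overline{K}$ with $U_{s_\nu}$ in any prescribed basic neighborhood of $W'$, uniformly across all types of extensions, and since $K$-conjugacy classes in $\overline{K}$ are finite you get the infinitely many distinct restrictions $U_{s_\nu}\cap K(X)\in\Omega$ you want. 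Your disk-based picture is correct in each case but reinvents what that theorem already packages. Also, you can avoid the Galois-action step by simply letting $U:=W'\cap\overline{K}$ for any chosen lift $W'$ of $W$, as the paper does.
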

\begin{proof}
Suppose first that $K$ is algebraically closed. By \cite[Theorem 7.2]{fundstatz}, for all extensions $W$ of $V$ to $K(X)$ there is a sequence $E=\{s_\nu\}_{\nu\in\Lambda}$ (where $\Lambda$ is a well-ordered set without maximum) such that
\begin{equation*}
W=V_E=\{\phi\in K(X)\mid \phi(s_\nu)\in V\text{~for all large~}\nu\}
\end{equation*}
and $W\neq V_{s_\nu}$ for every $\nu$. In particular, the elements $\phi(s_\nu)$ are either eventually in $V$ or eventually out of $V$ (by \cite[Proposition 3.2]{fundstatz}; see also the proof of Theorem 3.4 therein). Take $\psi\in K(X)$: then, if $W\in\B(\psi)$ then it must be $\psi(s_\nu)\in V$ eventually, and thus $\B(\psi)$ contains $V_{s_\nu}$ for all large $\nu$; on the other hand, if $W\in\B(\psi)^c$ then $\phi(s_\nu)\notin V$ eventually, and thus $\B(\psi)^c$ contains $V_{s_\nu}$ for all large $\nu$.

Let now $\Omega:=\B(\psi_1,\ldots,\psi_n)\cap\B(\phi_1)^c\cap\cdots\cap\B(\phi_m)^c\cap\est(K(X)|V)$ be a basic open set of $\est(K(X)|V)^\cons$ containing $W$. For every $i$, there is an index $N_i$ such that $\psi_i(s_\nu)\in V$ for all $i\geq N_i$; likewise, for every $j$ there is a $M_j$ such that $\phi_j(s_\nu)\notin V$ for all $\nu\geq M_j$. Therefore, for every $\nu\geq\max\{N_1,\ldots,N_n,M_1,\ldots,M_m\}$, we have $V_{s_\nu}\in\Omega$; hence, $W$ belongs to the closure of $\{V_{s_\nu}\}_{\nu\in\Lambda}\subseteq\est(K(X)|V)$, with respect to the constructible topology. Therefore, $W$ is not isolated in $\est(K(X)|V)^\cons$ and, since $W$ was arbitrary, $\est(K(X)|V)^\cons$ is perfect.

\medskip

Suppose now that $K$ is any field. Let $W\in\est(K(X)|V)$, and suppose that $W$ is isolated in $\est(K(X)|V)^\cons$. Let $\rho:\est(\overline{K}(X)|V)\longrightarrow\est(K(X)|V)$ be the restriction map. Since $W$ is isolated and $\rho$ is continuous, $\rho^{-1}(W)$ is open. Let $W'\in\rho^{-1}(W)$ and let $U:=W'\cap\overline{K}$: then, $U$ is an extension of $V$ to $\overline{K}$.

By the previous part of the proof, for every open neighborhood $\Omega$ of $W'$ there is an $s\in\overline{K}$ such that $U_s\neq W$ and $U_s\in\Omega$; since $\rho^{-1}(W)$ is open, it follows that for every such $\Omega$ there is a $U_s\in\rho^{-1}(W)$ with these properties. Therefore, the set $\Delta:=\{U_s\in\rho^{-1}(W)\mid s\in\overline{K}\}$ is dense in $\rho^{-1}(W)$. Since $U_s\cap K(X)=U_t\cap K(X)=W$ for every $U_s,U_t\in\Delta$, by Lemma \ref{lemma:Us-coniugati} $\Delta$ is finite; since $\est(K(X)|V)^\cons$ is Hausdorff, it follows that $\Delta=\rho^{-1}(W)$, and in particular $\rho^{-1}(W)$ is finite. Hence, all its points are isolated. However, this contradicts the fact that  $\est(\overline{K}(X)|V)^\cons$ is perfect; thus, also $\est(K(X)|V)^\cons$ must be perfect.
\end{proof}

The previous theorem allows to determine the isolated points of $\Zar(K(X)|D)^\cons$ for every integral domain $D$.

\begin{prop}
Let $D$ be an integral domain that is not a field, and let $J$ be the intersection of the nonzero prime ideals of $D$.
\begin{enumerate}[(i)]
\item If $J=(0)$, then $\Zar(K(X)|D)^\cons$ is perfect.
\item If $J\neq(0)$, then the only isolated points of $\Zar(K(X)|D)^\cons$ are $K[X]_{(f(X))}$ (where $f(X)$ is an irreducible polynomial of $K[X]$) and $K[X]_{(X^{-1})}$.
\end{enumerate}
\end{prop}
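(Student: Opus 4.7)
The plan is to split $\Zar(K(X)|D)$ via the restriction map $\rho\colon W\mapsto W\cap K$, using the equivalence ``$J\neq(0)\iff D$ is a Goldman domain'' (if $0\neq a\in J$ then $K=D[1/a]$; conversely $K=D[1/a]$ forces $a$ into every nonzero prime). Two observations work for arbitrary $D$. First, $K(X)$ is not isolated in $\Zar(K(X)|D)^\cons$ by Proposition \ref{prop:isolated-dim0}, since $K\subseteq K(X)$ is transcendental. Second, if $V:=W\cap K\neq K$, then $\est(K(X)|V)$ is closed in $\Zar(K(X)|D)^\cons$ and, by Theorem \ref{teor:ext-K(X)} applied to the non-field $V$, perfect; thus such a $W$ cannot be isolated in the ambient space.

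If $J\neq(0)$, pick $0\neq a\in J$; then $\B(1/a)=\Zar(K(X)|K)$ is open in $\Zar(K(X)|D)^\cons$, so the isolated points lying in $\Zar(K(X)|K)$ coincide with those of $\Zar(K(X)|K)^\cons$, which by Proposition \ref{prop:isolated-K(X)K} are precisely $K[X]_{(f(X))}$ and $K[X^{-1}]_{(X^{-1})}$. The two observations rule out everything else, proving (ii).

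For (i), the same two observations leave only $W=K[X]_{(f(X))}$ and, by the symmetry $X\leftrightarrow X^{-1}$, $W=K[X^{-1}]_{(X^{-1})}$; I treat the first. Suppose for contradiction $\{W\}=\Omega:=\B(\psi_1,\ldots,\psi_n)\cap\B(\phi_1)^c\cap\cdots\cap\B(\phi_m)^c$. Set $L:=W/\mm_W=K[X]/(f)$, let $\pi\colon W\to L$ be the residue map and $r_i:=\pi(\psi_i)\in L$, and let $\overline{D}$ be the integral closure of $D$ in $L$; clearing denominators in minimal polynomials shows $\overline{D}$ has quotient field $L$. Put $T:=\overline{D}[r_1,\ldots,r_n]$. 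The key claim is $T\neq L$: otherwise $L$ would be a finitely generated $\overline{D}$-algebra, making $\overline{D}$ a Goldman domain, and then $D$ itself by \cite[Theorem 22]{kaplansky} applied to the integral extension $D\subseteq\overline{D}$, contradicting $J=(0)$. Since $\overline{D}$ has quotient field $L$, no proper subring of $L$ containing $\overline{D}$ is a field, so $T$ admits a proper valuation overring $\tilde V\subsetneq L$; algebraicity of $L/K$ then forces $V:=\tilde V\cap K\neq K$.

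I would finish by forming the composite $W':=\pi^{-1}(\tilde V)$, a valuation ring of $K(X)$ by pullback (cf.\ \cite[Proposition 1.1.8(1)]{fontana_libro}), with $W'\subsetneq W$ and $W'\cap K=V\supseteq D$, so $W'\in\Zar(K(X)|D)\setminus\{W\}$. Checking $W'\in\Omega$ is routine: $\phi_j\notin W\supseteq W'$ gives $\phi_j\notin W'$; and each $\psi_i\in W$ either has $v_f(\psi_i)>0$ (so $\pi(\psi_i)=0\in\tilde V$, whence $\psi_i\in W'$) or $v_f(\psi_i)=0$ with $r_i\in T\subseteq\tilde V$ (so again $\psi_i\in W'$). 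This contradicts $\{W\}=\Omega$. The main obstacle is precisely producing $W'$: a naive perturbation of $f$ to a nearby irreducible polynomial cannot satisfy the divisibility patterns imposed by the $\phi_j$, so one must instead use the rank-two composite valuation obtained by pulling back a non-trivial valuation on the residue field $L$, whose existence hinges exactly on the non-Goldman hypothesis on $D$.
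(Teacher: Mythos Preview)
Your proof is correct. Part~(ii) and the two preliminary observations (ruling out $K(X)$ via Proposition~\ref{prop:isolated-dim0}, and ruling out every $W$ with $W\cap K\neq K$ via Theorem~\ref{teor:ext-K(X)}) match the paper exactly.

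For part~(i) you take a more hands-on route than the paper. The paper invokes Lemma~\ref{lemma:quoziente} to identify $\{Z\in\Zar(K(X)|D)\mid Z\subseteq W\}$ homeomorphically with $\Zar(W/\mm_W\,|\,D)$ and then applies Proposition~\ref{prop:isolated-dim0}: if $W$ were isolated, so would $W/\mm_W$ be, forcing $D$ to be a Goldman domain, a contradiction. You instead unpack this argument explicitly, building the composite valuation $W'=\pi^{-1}(\tilde V)$ inside the given basic neighborhood $\Omega$. The underlying idea is identical---the residue field $L=W/\mm_W$ is a finite extension of $K$, and a non-trivial valuation on $L$ containing $D$ exists precisely when $D$ is not Goldman---but the paper's packaging is shorter: Lemma~\ref{lemma:quoziente} handles the passage to composites uniformly, and Proposition~\ref{prop:isolated-dim0} already encapsulates the Goldman reduction that you carry out by hand via $\overline{D}$ and Kaplansky's Theorem~22. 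Your version has the compensating advantage of being self-contained and of making visible why the constraints $\phi_j\notin W$ are automatically inherited by any $W'\subseteq W$, so that only the residues $r_i=\pi(\psi_i)$ need to be controlled. The detour through the integral closure $\overline{D}$ is not strictly necessary (one could argue directly with $D[r_1,\ldots,r_n]$, which is what Proposition~\ref{prop:isolated-dim0} does internally), but it is harmless.
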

\begin{proof}
Let $W\in\Zar(K(X)|D)$. If $W\cap K\neq K$, then $W\in\est(K(X)|V)$, which is perfect (when endowed with the constructible topology) by Theorem \ref{teor:ext-K(X)}. In particular, $W$ is not isolated in $\Zar(K(X)|D)^\cons$.

Suppose that $W\cap K=K$. If $W=K(X)$, then $W$ is not isolated by Proposition \ref{prop:isolated-dim0}, since $K(X)$ is not algebraic over $K$. Let thus $W\neq K(X)$.

Suppose that $J=(0)$, and let $W$ be an isolated point of $\Zar(K(X)|D)$. Since $K\subseteq W$, we have $\mm_W\cap D=(0)$; by Lemma \ref{lemma:quoziente}, the quotient map of $W$ onto its residue field induces a homeomorphism between the spaces $\Delta:=\{Z\in\Zar(K(X)|D)\mid Z\subseteq W\}$ and $\Zar(W/\mm_W|D)$, where $W$ is sent to $W/\mm_W$. Since $W$ is isolated in $\Zar(K(X)|D)^\cons$, it is also isolated in $\Delta^\cons$, and thus $W/\mm_W$ must be an isolated point of $\Zar(W/\mm_W|D)^\cons$. By Proposition \ref{prop:isolated-dim0}, $D$ must be a Goldman domain, against the hypothesis $J=(0)$. Therefore, $W$ is not isolated and $\Zar(K(X)|D)^\cons$  is perfect.

Suppose now that $J\neq(0)$, and let $j\in J$, $j\neq 0$. Then, $D[j^{-1}]=K$, and thus $\B(j^{-1})=\est(K(X)|K)=\Zar(K(X)|K)$ is a clopen subset of $\Zar(K(X)|D)^\cons$; in particular, $W\in\est(K(X)|K)$ is isolated in $\Zar(K(X)|D)^\cons$ if and only if it is isolated in $\Zar(K(X)|K)^\cons$. The claim now follows from Proposition \ref{prop:isolated-K(X)K}.
\end{proof}

To conclude the paper, we extend Theorem \ref{teor:trdeg2-field} to valuation domains.
\begin{teor}\label{teor:trdeg2-val}
Let $V$ be a valuation domain with quotient field $K$, and let $L$ be a field extension of $K$ such that $\trdeg(L/K)\geq 2$. Then, $\est(L|V)^\cons$ and $\Zar(L|V)^\cons$ are perfect.
\end{teor}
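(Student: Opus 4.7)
The plan is to prove first that $\est(L|V)^\cons$ is perfect, and then to deduce the same for $\Zar(L|V)^\cons$. If $V=K$, then $\est(L|V)=\Zar(L|V)$ and perfectness follows from Theorem \ref{teor:trdeg2-field}, so I assume throughout that $V$ is not a field. The key general observation is that for every $W\in\est(L|V)$ one has $W\cap K=V\neq K$, so $K\nsubseteq W$; consequently, for any intermediate field $K\subseteq F\subseteq L$, the intersection $W\cap F$ is a non-field valuation of $F$.

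To show $\est(L|V)^\cons$ perfect, I would argue by contradiction. Suppose $W\in\est(L|V)$ is isolated, with $\{W\}=\Omega\cap\est(L|V)$ for a basic $\Omega=\B(x_1,\ldots,x_n)\cap\B(y_1)^c\cap\cdots\cap\B(y_m)^c$. Pick $a,b\in L$ algebraically independent over $K$, set $L':=K(a,b,x_1,\ldots,x_n,y_1,\ldots,y_m)$ and $V':=W\cap L'$, and let $\rho\colon\est(L|V)\to\est(L'|V)$ be the restriction. A direct check shows that $\rho^{-1}(V')=\{W\}$: any $Z$ in this fiber satisfies $x_i\in Z\cap L'=V'$ and $y_j\notin V'$, hence $Z\in\Omega\cap\est(L|V)=\{W\}$. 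Since $\rho$ is a closed surjection, $V'$ is isolated in $\est(L'|V)^\cons$, reducing the problem to $L$ finitely generated with $2\leq\trdeg(L/K)<\infty$. Choosing a transcendence basis $t_1,\ldots,t_d$ (so $d\geq 2$) and applying Corollary \ref{cor:algext-ZarExt} to the algebraic extension $K(t_1,\ldots,t_d)\subseteq L$, it suffices to prove $\est(K(t_1,\ldots,t_d)|V)^\cons$ perfect for all $d\geq 2$ and all non-field valuations $V$ with quotient field $K$.

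This last claim I would prove by induction on $d$. For $W\in\est(K(t_1,\ldots,t_d)|V)$, set $W_1:=W\cap K(t_1)$, which by the opening observation is a non-field valuation of $K(t_1)$. Then $W$ belongs to $\est(K(t_1,\ldots,t_d)|W_1)\subseteq\est(K(t_1,\ldots,t_d)|V)$, and $K(t_1,\ldots,t_d)=\mathrm{QF}(W_1)(t_2,\ldots,t_d)$ is purely transcendental of degree $d-1$ over $\mathrm{QF}(W_1)$. For the base case $d=2$, Theorem \ref{teor:ext-K(X)} applied to the non-field valuation $W_1$ and the rational function field $\mathrm{QF}(W_1)(t_2)$ gives $\est(K(t_1,t_2)|W_1)^\cons$ perfect. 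For $d>2$, the inductive hypothesis applied to $W_1$ and the $d-1\geq 2$ transcendentals $t_2,\ldots,t_d$ yields the same conclusion. Since the constructible topology on $\est(K(t_1,\ldots,t_d)|W_1)$ coincides with the subspace topology from $\est(K(t_1,\ldots,t_d)|V)^\cons$, the non-isolatedness of $W$ in the smaller space transfers to the larger one.

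Finally, for $\Zar(L|V)^\cons$, I decompose $\Zar(L|V)=\bigsqcup_{V'\in\Zar(K|V)}\est(L|V')$ via the restriction $W\mapsto W\cap K$. Given $W\in\Zar(L|V)$ with $V':=W\cap K$: if $V'=K$, then $W\in\Zar(L|K)^\cons$, which is perfect by Theorem \ref{teor:trdeg2-field}; otherwise $V'$ is a non-field valuation of $K$ with $\trdeg(L/\mathrm{QF}(V'))=\trdeg(L/K)\geq 2$, so $\est(L|V')^\cons$ is perfect by the previous step. In both cases $W$ fails to be isolated in a subspace of $\Zar(L|V)^\cons$ whose subspace topology coincides with the intrinsic constructible topology, and thus $W$ is not isolated in $\Zar(L|V)^\cons$. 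The crux of the argument is the inductive step: it requires the non-field hypothesis of Theorem \ref{teor:ext-K(X)} to survive at every stage, which is exactly what the observation $W\cap K(t_1)\neq K(t_1)$ guarantees.
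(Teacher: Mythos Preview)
Your proof is correct and follows the same overall strategy as the paper: embed each $W$ in some $\est(F(X)|W')$ with $W'$ a non-field valuation, invoke Theorem~\ref{teor:ext-K(X)}, handle algebraic extensions via Corollary~\ref{cor:algext-ZarExt}, and then decompose $\Zar(L|V)$ as the union of the sets $\est(L|V')$ over $V'\in\Zar(K|V)$. The execution differs in two respects. First, you reduce to finitely generated $L$ (via the closed surjection $\rho$) before passing to a purely transcendental subfield, while the paper treats the purely transcendental case directly, even in infinite transcendence degree, and only afterwards reduces general $L$ to that case. Second, your mechanism for guaranteeing a non-field intermediate valuation is cleaner: you simply note that $(W\cap F)\cap K=V\neq K$ forces $W\cap F\neq F$ for every intermediate field $F$, whereas the paper manufactures a specific transcendental $z_1$ (using an element $m\in\mathfrak{m}_V$) so that $z_1^{-1}\notin W$. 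With your observation in hand the induction on $d$ is actually superfluous: for any $d\geq 2$ you can set $W':=W\cap K(t_1,\ldots,t_{d-1})$, which is a non-field valuation of $K(t_1,\ldots,t_{d-1})$, and apply Theorem~\ref{teor:ext-K(X)} directly to $W'$ and the single variable $t_d$.
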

\begin{proof}
We first show that $\est(L|V)^\cons$ is perfect: suppose that is not, and let $W$ be an isolated point.

Suppose that $L=K(x,z_2,\ldots)$ is purely transcendental over $K$, where $x,z_2,\ldots,$ is a transcendence basis. Take an $m\in\mathfrak{m}_V\subseteq\mathfrak{m}_W$: then, at least one of $mx$ and $x^{-1}$ belongs to $\mm_W$. Let $z_1$ be that element. Then, $z_1,z_2,\ldots$ is also a transcendence basis of $L$.

Let $L':=K(z_1,z_3,\ldots,)$ be the extension of $K$ obtained adjoining all the element of this basis except $z_2$. Then, $z_1^{-1}\in L'\setminus W$, and thus $W\cap L'\neq L'$; since, by construction, $L\simeq L'(X)$, by Theorem \ref{teor:ext-K(X)} $\est(L|W\cap L')^\cons$ is perfect. Since $\est(L|W\cap L')\subseteq\est(L|W)$, all the elements of $\est(L|W\cap L')$ (in particular, $W$) are not isolated in $\est(L|V)^\cons$. This is a contradiction, and thus $\est(L|V)^\cons$ is perfect.

Suppose now that $L$ is arbitrary: then, we can find a purely transcendental extension $L'$ of $K$ such that $L'\subseteq L$ is algebraic. By the previous part of the proof, $\est(L'|V)^\cons$ is perfect; by Corollary \ref{cor:algext-ZarExt}, also $\est(L|V)^\cons$ is perfect. Therefore, $\est(L|V)^\cons$ is always perfect.

Finally, $\Zar(L|V)$ is the union of $\est(L|V_0)$, as $V_0$ ranges among the valuation overrings of $V$; since each of these is perfect with respect to the constructible topology (by the previous part of the proof), then also $\Zar(L|V)^\cons$ is perfect, as claimed.
\end{proof}

\begin{cor}
Let $V$ be a valuation domain with quotient field $K$, suppose $V\neq K$, and let $L$ be a transcendental field extension of $K$. Then, $\est(L|V)^\cons$ is perfect.
\end{cor}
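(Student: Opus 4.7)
The plan is to reduce to the two cases already handled in the excerpt, splitting on the transcendence degree of $L$ over $K$. Since $L$ is assumed transcendental over $K$, we have $\trdeg(L/K)\geq 1$, so we have only two subcases.

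If $\trdeg(L/K)\geq 2$, the conclusion is immediate from Theorem \ref{teor:trdeg2-val}, which tells us precisely that $\est(L|V)^\cons$ is perfect under this hypothesis (and the hypothesis $V\neq K$ plays no additional role beyond what is already built in). So no work is needed in this case.

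The remaining case is $\trdeg(L/K)=1$. Here I would pick any $x\in L$ that is transcendental over $K$, so that $K(x)\subseteq L$ is an algebraic extension and $K(x)$ is (by construction) transcendental over $K$, which is the quotient field of $V$. Since $V\neq K$, Theorem \ref{teor:ext-K(X)} applies and yields that $\est(K(x)|V)^\cons$ is perfect. Then Corollary \ref{cor:algext-ZarExt}, applied to the algebraic extension $K(x)\subseteq L$ over the valuation domain $V$, promotes perfectness from $\est(K(x)|V)^\cons$ to $\est(L|V)^\cons$. The transcendence hypothesis needed by that corollary is exactly the fact that $K(x)$ is transcendental over $K$, which holds by the choice of $x$.

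Since both cases are direct invocations of previously established results, there is no real technical obstacle; the only thing to be careful about is verifying the hypotheses of Corollary \ref{cor:algext-ZarExt} in the transcendence degree one case, namely that the intermediate field $K(x)$ is transcendental over the quotient field $K$ of $V$ (so that $|\est(L|V)|>1$ in the relevant step), which is automatic.
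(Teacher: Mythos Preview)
Your proof is correct and follows essentially the same approach as the paper: both split on $\trdeg(L/K)$, invoke Theorem~\ref{teor:trdeg2-val} when the transcendence degree is at least $2$, and in the degree-one case pick a transcendental element, apply Theorem~\ref{teor:ext-K(X)} to $\est(K(x)|V)^\cons$, and then use Corollary~\ref{cor:algext-ZarExt} to pass to $L$.
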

\begin{proof}
If $\trdeg(L/K)\geq 2$ the claim follows from Theorem \ref{teor:trdeg2-val}. If $\trdeg(L/K)=1$, let $X\in L$ be transcendental over $K$. By Theorem \ref{teor:ext-K(X)}, $\est(K(X)|V)^\cons$ is perfect; by Corollary \ref{cor:algext-ZarExt}, also $\est(L|V)^\cons$ is perfect.
\end{proof}

\begin{cor}\label{cor:LD-trdeg2}
Let $D$ be an integral domain, and let $L$ be a transcendental extension of the quotient field $K$ of $D$. If $\trdeg(L/K)\geq 2$, then $\Zar(L|D)^\cons$ is perfect.
\end{cor}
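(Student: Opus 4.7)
The plan is to deduce the corollary directly from Theorem \ref{teor:trdeg2-val}, by repeating for an arbitrary domain $D$ the ``union of extensions'' argument used at the very end of the proof of that theorem.

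First I would observe that
\[
\Zar(L|D)=\bigcup_{V\in\Zar(D)}\est(L|V),
\]
because for every $W\in\Zar(L|D)$ the restriction $V:=W\cap K$ is a valuation domain of $K$ containing $D$, hence belongs to $\Zar(D)$ (recall that $K$ itself is allowed as a member of $\Zar(D)$, corresponding to the trivial valuation on $K$), and by construction $W\in\est(L|V)$.

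Next I would invoke Theorem \ref{teor:trdeg2-val}: since $\trdeg(L/K)\geq 2$, each $\est(L|V)^\cons$ with $V\in\Zar(D)$ a nontrivial valuation domain is perfect, while the remaining case $V=K$, in which $\est(L|K)=\Zar(L|K)$, is covered by Theorem \ref{teor:trdeg2-field}. Therefore every set appearing in the decomposition above is perfect in the constructible topology.

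To conclude, given $W\in\Zar(L|D)$ and any open neighborhood $\Omega$ of $W$ in $\Zar(L|D)^\cons$, put $V:=W\cap K$; then $\Omega\cap\est(L|V)$ is an open neighborhood of $W$ in the subspace $\est(L|V)^\cons$, and by perfectness of the latter it contains some $W'\neq W$. Hence $\Omega$ itself contains a point different from $W$, so $W$ is not isolated in $\Zar(L|D)^\cons$; since $W$ was arbitrary, $\Zar(L|D)^\cons$ is perfect.

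I do not anticipate any real obstacle: the content of the result lies entirely in Theorem \ref{teor:trdeg2-val}, and passing from the valuation-overring setting to an integral-domain setting is a routine subspace/decomposition argument, formally identical to the last paragraph of the proof of that theorem.
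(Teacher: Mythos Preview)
Your proposal is correct and follows essentially the same approach as the paper: decompose $\Zar(L|D)$ as the union of the sets $\est(L|V)$ for $V\in\Zar(D)$ and apply Theorem~\ref{teor:trdeg2-val} to each piece. The paper's version is slightly terser (it does not separate out the case $V=K$, since the statement of Theorem~\ref{teor:trdeg2-val} already covers fields), but the argument is the same.
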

\begin{proof}
Any $W\in\Zar(L|D)$ belongs to $\est(L|V)$ for some $V\in\Zar(D)$. By Theorem \ref{teor:trdeg2-val}, all $\est(L|V)^\cons$ are perfect, and thus no $W$ is isolated. Hence, $\Zar(L|D)^\cons$ is perfect.
\end{proof}

\section*{Acknowledgments}
I would like to thank the referee for pointing out several problems in the first version of the paper.

\bibliographystyle{plain}
\bibliography{/bib/articoli,/bib/libri,/bib/miei}
\end{document}